\newtheorem{theorem}{Theorem}
\newtheorem{lemma}[theorem]{Lemma}
\newtheorem{proposition}[theorem]{Proposition}
\newtheorem{definition}[theorem]{Definition}
\theoremstyle{remark}
\newtheorem{remark}{Remark}
\newcommand{\Z}{\mathbb{Z}}
\newcommand{\Zp}{\Z^+}
\newcommand{\R}{\mathbb{R}}
\newcommand{\C}{\mathbb{C}}
\newcommand{\D}{\partial_t^\alpha}
\newcommand{\Dg}{\Delta_g}
\newcommand{\bra}{\langle}
\newcommand{\ket}{\rangle}
\newcommand{\dom}{\mathcal{D}}
\newcommand{\LM}{{L^2(M)}}
\newcommand{\LV}{{L^2(V)}}
\newcommand{\dtau}{\,d\tau}
\newcommand{\inv}{^{-1}}
\newcommand{\LSS}{\LSSop_{V}}
\newcommand{\LSST}{\LSSop_{V,T}}
\newcommand{\LSSTilde}{\widetilde{\LSSop}_{\widetilde{V}}}
\newcommand{\LSSTTilde}{\widetilde{\LSSop}_{\widetilde{V},T}}
\DeclareMathOperator{\supp}{supp}
\DeclareMathOperator{\linspan}{span}
\DeclareMathOperator{\re}{Re}
\DeclareMathOperator{\cl}{cl}
\DeclareMathOperator{\LSSop}{L}
\title{Inverse problems for heat equation and space--time fractional diffusion equation with one measurement}
\author[1]{Tapio Helin\thanks{tapio.helin@lut.fi}}
\author[2]{Matti Lassas\thanks{matti.lassas@helsinki.fi}}
\author[2]{Lauri Ylinen\thanks{lauri.ylinen@helsinki.fi}}
\author[2]{Zhidong Zhang\thanks{zhidong.zhang@helsinki.fi}}
\affil[1]{School of Engineering Science, LUT University, Finland}
\affil[2]{Department of Mathematics and Statistics, University of Helsinki, Finland}
\begin{document}
\maketitle

\begin{abstract}
  \noindent
  Given a connected compact Riemannian manifold $(M,g)$ without
  boundary, $\dim M\ge 2$, we consider a space--time fractional
  diffusion equation with an interior source that is supported on an
  open subset $V$ of the manifold. The time-fractional part of the
  equation is given by the Caputo derivative of order $\alpha\in(0,1]$,
  and the space fractional part by $(-\Dg)^\beta$, where
  $\beta\in(0,1]$ and $\Dg$ is the Laplace--Beltrami operator on the
  manifold. The case $\alpha=\beta=1$, which corresponds to the
  standard heat equation on the manifold, is an important special
  case. We construct a specific source such that measuring the
  evolution of the corresponding solution on $V$ determines the
  manifold up to a Riemannian isometry.

  \text{\bf{Keywords}}: inverse problem, space--time fractional
  diffusion
  equation, regularity, uniqueness\\\\
  \text{AMS subject classifications}: 35R11, 35R30.
\end{abstract}

\section{Introduction}

\subsection{Statement of the problem and main results}

Throughout this paper, $(M,g)$ will denote a connected compact smooth
Riemannian manifold without boundary, with metric $g$ and
$\dim M\ge 2$, and $V\subset M$ will be a nonempty open subset with
smooth boundary. Also, $0<\alpha\le 1$ and $0<\beta\le 1$ will be
fixed parameters. An important special case that we also consider is
the heat equation that corresponds to $\alpha=\beta=1$. We note that to the knowledge of the authors,  the main results (Theorems~\ref{thm:main-theorem} and~\ref{thm:M-equality}, and Proposition~\ref{prop:LSS-uniqueness}) are new also in this special case.

We consider the following space--time fractional diffusion equation:
\begin{subequations}
  \label{eq:FDE}
  \begin{align}
    \D u(x,t) + (-\Dg)^\beta u(x,t) &= f(x,t),  && (x,t)\in M\times (0,\infty),\label{eq:FDE-a}\\
    u(x,0) &= 0, && x\in M.
  \end{align}
\end{subequations}
Here the source term $f$ is supported on $V\times(0,T)$, for some
$T>0$, and $\D$ is the Caputo (also known as the Djrbashian--Caputo) fractional-derivative  of order
$\alpha$. For a smooth function $y$ defined on $[0,\infty)$, the
Caputo fractional-derivative is defined by $\partial^1_t y=y'$ for
$\alpha=1$, and
\begin{equation}
  \D y(t) :=
  \frac{1}{\Gamma(1-\alpha)}\int_0^t (t-\tau)^{-\alpha}y'(\tau)\ d\tau
  \qquad(t\ge 0,\,0<\alpha<1),
\end{equation}
where $\Gamma$ is the Euler's gamma function. In the second term
of~\eqref{eq:FDE-a}, $\Dg$ is the Laplace--Beltrami operator, and the
fractional power is taken in the sense of functional calculus. The precise
definition of $\D$ and $(-\Dg)^\beta$ can be found in
Section~\ref{sec:preliminaries}.

We show that for a smooth compactly supported source $f$ there exists
a unique so-called strong solution $u^f$ of~\eqref{eq:FDE}. The local
source-to-solution operator $\LSS$ is then defined as the operator
\begin{equation*}
  f\mapsto\LSS f:=u^f|_{V\times[0,\infty)}.
\end{equation*}

In this paper we consider an inverse problem for the space--time
fractional diffusion equation~\eqref{eq:FDE}, namely does $\LSS$
determine the manifold $(M,g)$ uniquely? Note that the input $f$ to
the local source-to-solution operator $\LSS$, i.e., the source term in
equation~\eqref{eq:FDE-a}, is supported on $V$. Also, the value
$\LSS f$, i.e., the evolution of the solution $u^f$ of~\eqref{eq:FDE},
is observed only on $V$. Hence $\LSS$ is determined by information
residing on $V$ only.

We show that $\LSS$ indeed determines $(M,g)$ up to an isometry. In
fact, we show the stronger result that we do not need to know the
operator $\LSS$ completely, but it is enough to know the value of
$\LSS h$ on some nonempty time interval $[0,T)$ with only one source
$h$, provided the source $h$ is chosen appropriately. Below, $\cl(V)$
denotes the closure of the set $V$.

\begin{theorem}
  \label{thm:main-theorem}
  Let $(M,g)$ be a connected compact smooth Riemannian manifold
  without boundary, with metric $g$ and $\dim M\ge 2$, let
  $V\subset M$ be a nonempty open subset with smooth boundary and let
  $T>0$. Then it is possible to construct a source
  $h\in C_c^\infty((0,T);\LV)$ such that the data
  \begin{equation*}
    (V,u^h|_{V\times[0,T)})
  \end{equation*}
  determines the manifold $(M,g)$ up to a Riemannian isometry. More
  precisely this means the following:
  
  Let $(\widetilde{M},\widetilde{g})$ be another smooth, connected and
  compact Riemannian manifold without boundary, with metric
  $\widetilde{g}$, and let $\widetilde{V}\subset\widetilde{M}$ be an
  open and nonempty set with smooth boundary. Then it is possible to
  construct a source $h\in C_c^\infty((0,T);\LV)$ that has the
  following property: If there exists a diffeomorphism
  \begin{equation*}
    \theta : \cl(\widetilde{V})\to\cl(V)
  \end{equation*}
  such that the solutions $u^h$ of~\eqref{eq:FDE} with source $h$ and
  the solution $\widetilde{u}^{\theta^* h}$ of the corresponding
  equation on $(\widetilde{M},\widetilde{g})$ with source $\theta^* h$
  satisfy
  \begin{equation}
    \label{eq:main-theorem}
    (\theta^* u^h)|_{\widetilde{V}\times[0,T)} = (\widetilde{u}^{\theta^* h})|_{\widetilde{V}\times[0,T)},
  \end{equation}
  then $(M,g)$ and $(\widetilde{M},\widetilde{g})$ are Riemannian
  isometric.
\end{theorem}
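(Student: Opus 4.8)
The plan is to reduce the inverse problem for the fractional diffusion equation to a spectral problem and then invoke the boundary (here: interior) spectral data approach à la Belishev–Kurylev for manifold reconstruction. Expand the Laplace–Beltrami operator spectrally: let $\{\varphi_j\}_{j\ge 0}$ be an orthonormal basis of $\LM$ of eigenfunctions of $-\Dg$ with eigenvalues $0=\lambda_0<\lambda_1\le\lambda_2\le\cdots\to\infty$. Writing the source as $f(x,t)=\sum_j f_j(t)\varphi_j(x)$, the strong solution of~\eqref{eq:FDE} has the representation
\begin{equation*}
  u^f(x,t)=\sum_{j\ge 0}\varphi_j(x)\int_0^t (t-\tau)^{\alpha-1}E_{\alpha,\alpha}\!\bigl(-\lambda_j^\beta (t-\tau)^\alpha\bigr)\,f_j(\tau)\,d\tau,
\end{equation*}
where $E_{\alpha,\alpha}$ is the Mittag-Leffler function (for $\alpha=\beta=1$ this collapses to the usual heat semigroup $e^{-\lambda_j t}$). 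Hence $\LSS f$ restricted to $V$ is a sum of separable terms whose temporal factors are linearly independent analytic functions indexed by the distinct values of $\lambda_j^\beta$. The first step is therefore to show that from the data $u^h|_{V\times[0,T)}$ one can recover, for the chosen source $h$, the spatial functions $\varphi_j|_V$ (grouped over each eigenspace) together with the eigenvalues $\lambda_j$. Knowledge on the short interval $[0,T)$ suffices because all temporal factors are real-analytic in $t$, so the restriction to $[0,T)$ determines them on all of $[0,\infty)$; the linear independence of the distinct Mittag-Leffler kernels (which must be established, using their distinct decay/asymptotic behaviour) lets one separate the contributions of different eigenvalues by, e.g., taking Laplace transforms in $t$ and reading off poles/singularities.

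The second step is the construction of the single good source $h$. The obstruction is that with an arbitrary source some eigenfunctions may be invisible on $V$: we need $h$ to excite every eigenspace in a way that is detectable on $V$. Concretely I would pick $h(x,t)=\eta(t)\,p(x)$ with $\eta\in C_c^\infty((0,T))$ chosen so that its Laplace transform does not vanish at any of the relevant points $-\lambda_j^\beta$ (a single generic $\eta$ works since there are countably many constraints), and $p\in\LV$ chosen so that $\langle p,\varphi_j\rangle_{\LM}\ne 0$ for all $j$; such a $p$ exists because $V$ is a nonempty open set — if $p\perp\varphi_j$ in $\LM$ for some $p$ supported in $V$, that is one linear constraint, and a Baire-category / countable-intersection argument produces a $p$ avoiding all of them. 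Care is needed for repeated eigenvalues: there the data only gives us the function $\sum_{\lambda_j=\lambda}\langle p,\varphi_j\rangle\,\varphi_j|_V$, i.e. one vector in each eigenspace restricted to $V$. As in the Belishev–Kurylev interior spectral data setting, one such generic vector per eigenspace together with the eigenvalues is still enough, provided it is nonzero on $V$; unique continuation for $\Dg$ guarantees a nontrivial eigenfunction cannot vanish on the open set $V$, so nonzero inner product against $p$ is the right condition. I would also use here that we are free to exploit the coordinate-invariant nature of the data: pulling back by $\theta$ turns the hypothesis~\eqref{eq:main-theorem} into the statement that the two manifolds produce the same interior spectral data on the identified set $\cl(V)$.

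The third step is the manifold reconstruction from interior spectral data. Given the eigenvalues $\lambda_j$ and the traces $\varphi_j|_V$ (one representative per eigenspace suffices once we also know the eigenvalues, because the algebra generated by products reconstructs the full eigenprojections on $V$), one reconstructs $(M,g)$ up to isometry by the standard scheme: form the Green's functions / heat kernel on $V$, use them to define the Gelfand-type map $x\mapsto (\varphi_j(x))_j \in \ell^2_{w}$, show this embeds $M$ into a fixed sequence space, recover the image as the completion of the data set, and finally recover the metric from the eigenvalue equation $-\Dg\varphi_j=\lambda_j\varphi_j$ which determines $g$ in the reconstructed coordinates. This part is essentially a citation to the Belishev–Kurylev theory (as developed for interior data by Katchalov–Kurylev–Lassas and others), so the novelty and the real work lie in Steps 1 and 2: establishing the strong-solution representation with quantitative Mittag-Leffler estimates (already provided by the earlier sections of the paper), proving the crucial linear independence of the temporal kernels so that finitely/countably many eigenvalues can be disentangled from one scalar time trace, and the genericity argument that a single source $h$ simultaneously renders all eigenspaces visible on $V$. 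I expect the main obstacle to be precisely this simultaneous-visibility construction together with handling infinite-dimensional issues (the series must converge in the right topology on $V\times[0,T)$, and the Laplace-transform separation must be justified rigorously despite the accumulation of eigenvalues at infinity).
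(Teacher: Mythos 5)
Your overall architecture (spectral representation of $u^h$, Laplace transform in $t$ to separate the eigenvalue contributions via poles and residues, then a citation to known geometric uniqueness results) parallels the paper's: the paper likewise proves $\mathcal{L}\LSS f(s)=H_V(s^\alpha)\mathcal{L}f(s)$ with $H_V(z)=\sum_k (z+\lambda_{q_k}^\beta)^{-1}P_{V,k}$ and reads off the eigenvalues and the restricted eigenprojections $P_{V,k}$ from the poles and residues of the meromorphic continuation, after which it reduces to the local source-to-solution operator for the \emph{wave} equation and cites a known uniqueness theorem for that (rather than running Belishev--Kurylev directly on interior spectral data, as you propose; both endgames are acceptable provided the requisite data is in hand).

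The genuine gap is in your Step 2, i.e.\ in the choice of the single source. With a separated source $h(x,t)=\eta(t)p(x)$, the measurement determines, for each distinct eigenvalue $\lambda_{q_k}$, only the single function $(P_k p)|_V\in\LV$ --- one vector in the trace of the eigenspace $E_k$ on $V$ --- and not the restricted projection $P_{V,k}$ as an operator (equivalently, not the kernel $P_k(x,y)$ for $x,y\in V$, nor even $\dim E_k$). Your genericity/Baire argument guarantees visibility, $\langle p,\varphi_j\rangle\neq 0$, but it cannot repair this: when $\dim E_k>1$ no single $p$ determines $E_k|_V$. The assertion that ``one generic vector per eigenspace together with the eigenvalues is still enough'' is exactly the unproved step; the interior spectral data reconstruction results you invoke require the full traces $\varphi_j|_V$ of an orthonormal basis (equivalently the projections $P_{V,k}$), and the paper's reduction to the wave equation needs $P_{V,k}$ as well, since $\LSS^{\mathrm{hyp}}p(\cdot,t)=\sum_k\int_0^t s_{q_k}(t-\tau)P_{V,k}\,p(\cdot,\tau)\,d\tau$. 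This is precisely why the paper does \emph{not} use a separated source: its $h=\sum_k h_k(t)\psi_{r(k)}$ packs a sequence $(\psi_k)$ spanning a dense subspace of $\LV$, each index repeated infinitely often, into time-bumps with pairwise disjoint supports accumulating from the left at $t=S<T$. A holomorphic-continuation-in-time argument (if $\supp f\subset(0,T')$ then $\LSS f|_{[0,T')}$ determines $\LSS f$ on all of $[0,\infty)$) plus induction peels off $\LSS(h_k\psi_{r(k)})$ one by one; time-translation, mollification and density then upgrade the single measurement to knowledge of the whole operator $\LSS$, from which the full projections $P_{V,k}$ follow. Without some such mechanism for encoding a dense family of spatial profiles in one measurement, your construction only yields a strictly weaker dataset, and the reconstruction step does not go through as claimed.
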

\begin{remark}
  \label{rem:pullback}
  The pullback $\theta^*$ of the diffeomorphism $\theta$ acts on an
  $\LV$-valued function $f$ by
  \begin{equation*}
    (\theta^* f)(t) := \theta^*(f(t)).
  \end{equation*}
  Above, an open set with smooth boundary refers in local coordinates to definition given in \cite[Appendix C.1]{evans2002partial}.
\end{remark}
\begin{remark}
  An explicit expression for the source $h$ is given in
  Definition~\ref{def:h}.
\end{remark}

In Section~\ref{sec:LSS} we show that the local source-to-solution
operator $L_V$ is well-defined as an operator
\begin{equation}
  \label{eq:LSS-first}
  \LSS:C^2_c((0,\infty);\LV)\to C^1([0,\infty);\LV)\cap L^\infty([0,\infty);\LV).
\end{equation}
As in~\eqref{eq:LSS-first}, instead of considering functions depending
on both the space variable $x\in M$ and the time variable $t\in\R$, it
is convenient to consider them as functions of time $t\in\R$ taking
values in the Hilbert space of square-integrable functions on $M$. For
the convenience of the reader, we review the necessary definitions and
results of calculus of Hilbert space valued functions in the Appendix.

The proof of Theorem~\ref{thm:main-theorem} consists of two parts. The
first part is to show that the function
$\LSS h|_{[0,T)}\in C^1([0,T);\LV)\cap L^\infty([0,T);\LV)$ uniquely
determines the operator $\LSS$. The second part is to show that the
operator $\LSS$ determines the manifold $(M,g)$ up to a Riemannian
isometry. These steps are formulated below as two independent results.

In the following, let $T>0$ be a constant, $(M,g)$ and
$(\widetilde{M},\widetilde{g})$ be Riemannian manifolds, $V\subset M$
and $\widetilde{V}\subset\widetilde{M}$ be open sets, and
$\theta:\cl(\widetilde{V})\to\cl(V)$ be a diffeomorphism, and assume
they all satisfy the assumptions of
Theorem~\ref{thm:main-theorem}. Furthermore, let
\begin{equation*}
  \LSSop_{\widetilde{V}}:C^2_c((0,\infty);L^2(\widetilde{V}))\to C^1([0,\infty);L^2(\widetilde{V}))\cap L^\infty([0,\infty);L^2(\widetilde{V}))
\end{equation*}
be the local source-to-solution operator on the manifold
$(\widetilde{M}, \widetilde{g})$.

\begin{proposition}
  \label{prop:LSS-uniqueness}
  Let $h\in C_c^\infty((0,T);\LV)$ be the source defined in
  Definition~\ref{def:h}.  If
  \begin{equation}
    \label{eq:LSSh-equality}
    (\theta^*\LSS h)|_{[0,T)} = (\LSSop_{\widetilde{V}} \theta^*h)|_{[0,T)},
  \end{equation}
  then
  \begin{equation}
    \label{eq:LSS-equality}
    \theta^*\LSS f= \LSSop_{\widetilde{V}} \theta^* f
  \end{equation}
  for every $f\in C^2_c((0,\infty);\LV)$.
\end{proposition}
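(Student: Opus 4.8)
The plan is to exploit the fact that the single source $h$ from Definition~\ref{def:h} is, by construction, a combination of a spanning set for sources in the time variable, so that knowing $\LSS h|_{[0,T)}$ is essentially knowing how $\LSS$ acts on a rich enough family of time profiles. Concretely, I expect $h$ to have the form $h(t) = \sum_{j} \phi_j(t)\, e_j$ (or an integral analogue), where $\{e_j\}$ is a fixed orthonormal-type basis of $\LV$ and $\{\phi_j\}\subset C_c^\infty((0,T))$ is a sequence of time profiles chosen so that finite linear combinations of their time-translates and derivatives are dense in a suitable space of source profiles. The first step is therefore to record the linearity of $\LSS$ (which follows from the well-posedness in Section~\ref{sec:LSS} and the linearity of equation~\eqref{eq:FDE}) and the analogous linearity of $\LSSop_{\widetilde V}$, and to note that $\theta^*$ is linear and bounded on the relevant $L^2$ spaces; hence both sides of~\eqref{eq:LSS-equality} are linear in $f$.

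The second step is to reduce~\eqref{eq:LSS-equality} from general $f\in C_c^2((0,\infty);\LV)$ to $f$ of the separated form $f(t) = \psi(t)\, e_j$ for a single basis element $e_j$ and a scalar $\psi\in C_c^2((0,\infty))$; by density and continuity of $\LSS$, $\LSSop_{\widetilde V}$ and $\theta^*$ this suffices. The third, and central, step is a \emph{time-invariance / convolution} argument: equation~\eqref{eq:FDE} is autonomous in $t$ (the operators $\D$ and $(-\Dg)^\beta$ do not depend on $t$, and the initial condition is zero), so the solution operator $f\mapsto u^f$ commutes with time translations in the sense that $u^{f(\cdot - s)} = u^f(\cdot - s)$ on the region where this makes sense, and more generally $\LSS$ intertwines convolution in time with the elementary fundamental-type kernel of the fractional diffusion semigroup. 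Consequently, if $\LSS h$ and $\LSSop_{\widetilde V}\theta^* h$ agree on $[0,T)$ and $h$ on each mode $e_j$ carries a time profile $\phi_j$ whose translates/derivatives span a dense subspace of $C_c^2((0,T'))$ for every $T'$, one propagates the equality: the Duhamel representation $u^f(t) = \int_0^t \mathcal K(t-\tau) f(\tau)\,d\tau$ shows $\LSS f(t)$ depends on $f$ only through this convolution, so matching $\LSS$ on the dense set of profiles supported in $[0,T)$ forces $\LSS f = \theta_*\LSSop_{\widetilde V}\theta^* f$ for all $f$ supported in $(0,\infty)$, using that the convolution kernels $\mathcal K$, $\widetilde{\mathcal K}$ are real-analytic in $t$ for $t>0$ so agreement on an interval extends to all times.

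I expect the main obstacle to be the rigorous handling of the interplay between $\theta^*$ and the two source-to-solution maps: $\theta$ is only a diffeomorphism of $\cl(V)$ onto $\cl(\widetilde V)$ (no relation between $M$ and $\widetilde M$ is assumed yet), so the identity~\eqref{eq:LSS-equality} is a statement purely about data on $V$ and $\widetilde V$, and one must be careful that the reduction to separated sources and the convolution/analyticity argument are carried out \emph{after} transporting everything to $V$ via $\theta^*$, so that the time-translation structure—which is intrinsic and unaffected by $\theta^*$—is what does the work, while the spatial part is simply carried along. A secondary technical point is verifying that the specific profiles $\{\phi_j\}$ built into $h$ in Definition~\ref{def:h} really do have the claimed density property (density of the span of their dilates/derivatives, or of their time-translates, in $C_c^2$), and that the convergence in that density statement is strong enough to pass through the continuity of $\LSS$ as stated in~\eqref{eq:LSS-first}; this is where the precise construction of $h$ must be invoked rather than treated as a black box.
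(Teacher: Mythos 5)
Your strategy has the right ingredients and is essentially the paper's: linearity, reduction to separated sources $\psi(t)\xi$ via the continuity of $\LSS$ (Proposition~\ref{prop:L-continuity}), time-translation invariance of $\LSS$ (Lemma~\ref{lemma:L-time-invariance}), density of translates of the time profiles realized through mollification and Riemann sums, and an analytic-continuation-in-time step to propagate equality beyond the observation window (Proposition~\ref{prop:L-holomorphic}). But there is one genuine gap at the very first stage: you do not explain how to recover the action of $\LSS$ on each \emph{individual} separated source $h_k\psi_{r(k)}$ from the single function $\LSST h$, where $h=\sum_k h_k\psi_{r(k)}$. Density of the profile family does not do this by itself; one measurement is one superposition, and it must be decoded. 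The paper's construction makes the supports of the $h_k$ pairwise disjoint and temporally ordered, $\supp h_k\subset\big((1-2^{1-k})S,(1-2^{-k})S\big)$, and the decoding is an induction (Lemma~\ref{lemma:L-on-h-psi}): on the window $[0,T')$ with $T'=(1-2^{-(j+1)})S$ the terms with index $l>j+1$ have not yet fired, so $\LSSop_{V,T'}h=\sum_{k=1}^{j+1}\LSSop_{V,T'}(h_k\psi_{r(k)})$; the first $j$ terms agree by the induction hypothesis, hence the $(j+1)$-st terms agree on $[0,T')$, and the holomorphic-extension lemma upgrades this to agreement on all of $[0,\infty)$. Without this peeling step your reduction from one measurement to knowledge of $\LSS$ on a dense family of sources does not go through.

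Two secondary points. First, your analyticity claim needs sharpening: $\LSS f$ is not real-analytic in $t$ on the support of $f$ (the upper limit of the Duhamel integral depends on $t$); what actually holds is that $\LSS f$ restricted to times past $\supp f$ extends holomorphically to a half-plane, and for the infinite eigenfunction sum one needs the uniform bound on $E_{\alpha,\alpha}$ over $\C\setminus\C_+$ to conclude the limit is holomorphic. Second, each spatial element $\psi_l$ must carry \emph{infinitely many} time profiles (the role of the sequence $r$ hitting every integer infinitely often): the profiles with $r(k)=l$ double as mollifiers $d_k$ with shrinking supports, which is what lets translation plus Riemann summation produce an arbitrary $a\in C_c^2((0,\infty))$ in front of $\psi_l$. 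Your sketch implicitly uses this but does not explain why one profile per mode would not suffice.
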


\begin{theorem}
  \label{thm:M-equality}
  If the equality~\eqref{eq:LSS-equality} holds for every
  $f\in C^2_c((0,\infty);\LV)$, then the manifolds $(M,g)$ and
  $(\widetilde{M},\widetilde{g})$ are Riemannian isometric.
\end{theorem}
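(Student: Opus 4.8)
The plan is to extract from the source-to-solution operator the \emph{local spectral data} of $-\Dg$ on $V$ --- the eigenvalues of $-\Dg$ together with the restrictions to $V$ of the corresponding spectral projections --- and then to invoke the known fact that such local spectral data determines a closed Riemannian manifold up to isometry.

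\textbf{Step 1 (spectral representation of $\LSS$).} Let $0=\lambda_0<\lambda_1\le\lambda_2\le\cdots$ be the eigenvalues of $-\Dg$ with an orthonormal eigenbasis $\{\phi_k\}$ of $\LM$. For $f\in C^2_c((0,\infty);\LV)$, extend each $f(t)$ by zero to $M$ and write $u^f(t)=\sum_k u_k(t)\phi_k$. Each coefficient solves the scalar Caputo equation $\D u_k+\lambda_k^\beta u_k=\bra f(t),\phi_k\ket_{\LM}$, $u_k(0)=0$, whose solution is the Mittag--Leffler convolution $u_k(t)=\int_0^t(t-s)^{\alpha-1}E_{\alpha,\alpha}\bigl(-\lambda_k^\beta(t-s)^\alpha\bigr)\bra f(s),\phi_k\ket_{\LM}\,ds$. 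Since $\supp f(s)\subset V$ we have $\bra f(s),\phi_k\ket_{\LM}=\bra f(s),\phi_k|_V\ket_{\LV}$, so that for $x\in V$
\begin{equation*}
  \LSS f(x,t)=\sum_k\Bigl(\int_0^t(t-s)^{\alpha-1}E_{\alpha,\alpha}\bigl(-\lambda_k^\beta(t-s)^\alpha\bigr)\bra f(s),\phi_k|_V\ket_{\LV}\,ds\Bigr)\phi_k(x).
\end{equation*}

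\textbf{Step 2 (Laplace transform and recovery of local spectral data).} Since $u^f$ is bounded on $[0,\infty)$, the Laplace transform $\widehat{\LSS f}(p)=\int_0^\infty e^{-pt}\LSS f(t)\,dt$ is defined for $\re p>0$; testing against sources $f(t)=\eta(t)\psi$ with $\eta\in C_c^\infty((0,\infty))$ and $\psi\in\LV$ (extended by zero to $M$), and using $\int_0^\infty e^{-pt}t^{\alpha-1}E_{\alpha,\alpha}(-\mu t^\alpha)\,dt=(p^\alpha+\mu)^{-1}$ together with the convolution theorem, one obtains $\widehat{\LSS(\eta\psi)}(p)=\widehat\eta(p)\,\bigl[(p^\alpha+(-\Dg)^\beta)^{-1}\psi\bigr]\big|_V$. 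Hence $\LSS$ determines the localized resolvent $R(z):\LV\to\LV$, $R(z)\psi:=\bigl[((-\Dg)^\beta+z)^{-1}\psi\bigr]\big|_V$, first for $z\in(0,\infty)$ (set $z=p^\alpha$) and then, by analytic continuation of the resolvent, as a meromorphic $\LV$-valued family on $\C$. Its poles lie exactly at $z=-\lambda_k^\beta$: no pole cancels because every $\phi_k$ is nonzero on the open set $V$ by unique continuation for the elliptic operator $\Dg$. Since $\beta$ is fixed and known, this recovers the eigenvalues $\lambda_k$; the residue at $z=-\lambda_k^\beta$ recovers $P_{\lambda_k}|_V$, the restriction to $V$ of the orthogonal projection onto the $\lambda_k$-eigenspace of $-\Dg$. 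The collection $\{(\lambda_k,P_{\lambda_k}|_V)\}$ is the gauge-invariant form of the local spectral data on $V$.

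\textbf{Step 3 (matching and reconstruction).} Carrying out the same computation on $(\widetilde M,\widetilde g)$ for $\LSSop_{\widetilde{V}}$ and comparing via the hypothesis~\eqref{eq:LSS-equality}, we conclude that the spectra of $-\Dg$ and $-\Delta_{\widetilde g}$ coincide with multiplicities and that the corresponding localized projections agree under the identification of $L^2$-spaces induced by the diffeomorphism $\theta$; equivalently, after fixing orthonormal eigenbases one can arrange that $\{\theta^*(\phi_k|_V)\}$ and $\{\widetilde\phi_k|_{\widetilde V}\}$ span the same subspaces, i.e.\ the two manifolds have the same local spectral data on $V\cong\widetilde V$. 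It is a known result that the local spectral data $\{\lambda_k,\phi_k|_V\}$ on a nonempty open set determines a closed Riemannian manifold up to isometry: using the spectral data one computes $\LM$-inner products of solutions of the auxiliary wave equation $\partial_t^2w-\Dg w=F$, $\supp F\subset V\times(0,\infty)$, through a Blagove\v{s}\v{c}enskii-type identity, and approximate controllability of the wave equation (Tataru's unique continuation) then yields the interior distance functions from which $(M,g)$ is reconstructed. Applying this on both manifolds produces the desired Riemannian isometry.

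\textbf{Main obstacle.} The conceptual weight sits in Step 3 --- the inverse-spectral reconstruction from local data --- which we reduce to an established result (or reprove through the control method). The technical point special to the fractional regime is the rigorous justification of Step 2: interchanging the Laplace transform with the eigenfunction sum (Fubini/dominated convergence using the boundedness and regularity of $u^f$), the identity for the Laplace transform of the Mittag--Leffler kernel, and the meromorphic continuation that isolates the eigenvalues --- all of which rely on the analyticity and decay properties of $E_{\alpha,\alpha}$ rather than on the elementary exponential structure available when $\alpha=1$.
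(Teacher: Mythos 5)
Your proposal is correct and follows essentially the same route as the paper: recover the pairs $(\lambda_{q_k},P_{V,k})$ from the Laplace-transformed source-to-solution relation and the meromorphic structure of the localized resolvent (with unique continuation guaranteeing no pole cancels), then reduce to the wave-equation problem with local spectral data, which is solved by the boundary control method. The only cosmetic difference is that the paper packages the last step by explicitly assembling the hyperbolic local source-to-solution operator from the spectral data before citing the wave-equation uniqueness result, whereas you appeal to the local-spectral-data formulation directly.
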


\subsection{Motivation and literature}
Einstein's celebrated paper \cite{Einstein:1905b} introduced the 
classical explanation of
Brownian motion as a random walk, in which the dynamics of a particle suspended in a fluid is described by an uncorrelated, Markovian, Gaussian stochastic process. 
A key result of this theory is that the mean-square displacement of the random walk is
proportional to time, i.e., $\langle x^2\rangle \propto t$ for large $t$. At the continuum limit, it follows that the concentration of a large number of 
independent particles is governed by the diffusion equation.



Despite the success of standard diffusion model, there are 
a number of experimental observations of diffusion processes, where
the mean-square displacement does not scale linearly.
A random walk interpretation can also be given to such processes:
In a standard discrete random walk the step length is a fixed distance and the steps occur at discrete times. In a more general walk (Continuous Time Random Walk, CTRW) a waiting time and step length are sampled from given probability distributions. 
At the continuum limit, a suitable power law distribution for the waiting time
results to {\it subdiffusive} processes, where
$\langle x^2\rangle \propto t^\alpha$, $0<\alpha < 1$. 
Analogous to the classical diffusion, the concentration of random particles
satisfies a model where the time derivative in the diffusion equation is replaced by a fractional time derivative of order $\alpha$. 
Similarly, a suitable power law step length distribution replaces the Laplacian in the diffusion equation by a fractional power $(-\Delta)^\beta$. The variability of these distributions gives rise to the class of fractional PDEs in \eqref{eq:FDE}.

Anomalous diffusion processes described by equation \eqref{eq:FDE} appear in spatially disordered systems such as porous media, in turbulent fluids and plasma, biological systems and finance (see, e.g.,
\cite{BagleyTorvik:1983,Koeller:1984,He:1998,
  SabatierAgrawalMachado:2007,Nigmatullin:1986,BerkowitzCortisDentzScher:2006,
  DelCarrerasLynch:2005,CarrerasLynchZaslavsky:2001,CarteaDel:2007,ZoiaRossaKardar:2007, 
  DubkovSpagnoloUchaikin:2008}).
Following the random walk analogy, our main result in Theorem \ref{thm:main-theorem} can be interpreted as follows: we introduce a rigorous strategy to inject new particles into a diffusion process taking place in an unknown medium so that a single long-term observation of the concentration determines the properties of the medium.

Mathematical work on fractional calculus is extensive. For a general overview, see textbooks \cite{kilbas2006theory,SamkoKilbasMarichev:1993}, reviews \cite{henry2010introduction, bucur16} and references therein.
Without providing a comprehensive list, we mention that
classical properties for the fractional diffusion equations, such as the fundamental solutions, the regularity estimates and the maximum principles are established in \cite{sakamoto2011initial, LuchkoYamamoto:2018,
  Luchko2009maximum,Liu2016strong,MainardiLuchkoPagnini:2001}.
Moreover, numerical analysis for fractional PDEs is considered in \cite{JiangMa:2011,JinLiZhou:2018,ChenLiuTurnerAnh:2007,YangTurnerLiuIli:2011,LiuZhuangAnhTurnerBurrage:2007}.

Inverse problems for fractional PDEs have gained major attention in recent years.
The review \cite{Jin2015tutorial} summarizes work on
some common fractional inverse problems and collects some open
problems. Uniqueness and reconstruction of unknown parameters are considered in
 \cite{RundellZhang:2018,Zhang:2017,LiuYamamoto:2010,
  SunYanWei:2019,SunZhangWei:2019}.
In particular, we mention the article \cite{KianOksanenSoccorsiYamamoto:2018} by Y. Kian, L. Oksanen, E. Soccorsi, and M. Yamamoto, where the uniqueness of the Riemannian metric is proved for time-fractional PDE given Dirichlet-to-Neumann map at a fixed time at the boundary of the manifold.
For techniques based on Carleman estimates, we refer to \cite{XuChengYamamoto:2011,HuangLiYamamoto:2019}. 

There are a number of other interesting setups for fractional inverse problems: In the static case, the fractional Calderon problems 
are investigated in \cite{RulandSalo:2018,GhoshLinXiao:2017,GhoshRulandSaloUhlmann:2018,Salo:2017,GhoshSaloUhlmann:2016}.
If a more general waiting time probability
distribution is considered, then $\partial^\alpha_t$ may need to be
replaced by a weighted mixture of fractional derivatives.
This leads to the so-called multi-term time
fractional diffusion equations and the distributed order differential
equations \cite{Rundell2017fractional,Li2017analyticity,LiLiuYamamoto:2015,
  LiYamamoto:2015, LiLuchkoYamamoto:2014}. Also, there is recent effort to study statistical fractional inverse problems  \cite{tuan2017inverse, guerngar2018inverse, nane2018approximate, ZhangJiaYan:2018, niu2018inverse}. 
  
The work in this paper is connected to geometric inverse problems outside fractional PDEs through many aspects of the observational setup. In wave propagation models with finite speed of propagation single measurement data has been studied in \cite{helin2012inverse, helin2014inverse}. The setup with multiple measurements is better understood: in such a case geometric version of boundary control method can be used for deriving uniqueness and reconstruction \cite{belishev1992reconstruction, belishev1988approach, katchalov2001inverse, anderson2004boundary, bingham2008iterative}. Finally, let us mention that closed manifolds have been studied also in the framework of inverse spectral problems \cite{krupchyk2008inverse}.

\subsection{Outline of the paper}

This paper is organized as follows. In Section~\ref{sec:preliminaries} we record some preliminary definitions and present
some well-known results regarding the Mittag--Leffler function $E_{a,b}$,
which plays a central role in representing the solution to~\eqref{eq:FDE}.

We investigate the direct problem for~\eqref{eq:FDE} in
Section~\ref{sec:direct-problem}, where the existence, uniqueness and representation
results of the solution are proved. Also, the source-to-solution
operator $\LSS$ is defined, which will be studied in the inverse problem part.

The inverse problem is considered in Section~\ref{sec:inverse-problem}. First, we prove that
the operator $\LSS$ can be uniquely determined given a single
measurement (Proposition~\ref{prop:LSS-uniqueness}). Second, we
prove that the operator $\LSS$ determines the manifold up to an
isometry (Theorem~\ref{thm:M-equality}). The main result,
Theorem~\ref{thm:main-theorem}, immediately follows from these two
results.

\section{Preliminaries}
\label{sec:preliminaries}

This section contains some technical tools that are required in
understanding equation~\eqref{eq:FDE}. We recall the definition of the
Caputo derivative of fractional order, and we review the
Laplace--Beltrami operator $\Dg$, as well as some basic functional
calculus to define its fractional powers. We also give a definition of
the strong solution of~\eqref{eq:FDE}.


\subsection{The Mittag--Leffler function and fractional derivatives}

The (two-parameter) Mittag--Leffler function $E_{a,b}$ has a role in
the fractional differential equations analogous to the role of the
exponential function in the case of the integer order differential
equations. The function is defined as
\begin{equation}
  \label{eq:Mittag-Leffler}
  E_{a,b}(z) := \sum_{k=0}^\infty \frac{z^k}{\Gamma(ka+b)}
  \qquad(a>0,\,b>0,\,z\in\C).
\end{equation}
In particular, $E_{1,1}(z) = e^z$. For a treatise of the
Mittag--Leffler function, see~\cite{MR3244285}.

The radius of convergence of the power
series~\eqref{eq:Mittag-Leffler} is infinite, so $E_{a,b}$ is an
entire function. A recurrence relation for the gamma function together
with termwise differentiation of the power series shows that
$E_{a,1}'(z) = a\inv E_{a,a}(z)$. For every $\lambda\in\C$, the
function
\begin{equation}
  \label{eq:F-lambda}
  G_\lambda(z) := E_{a,1}(-\lambda z^a)
  \qquad (z\in\C_+:=\{z\in\C:\re(z)>0\})
\end{equation}
is holomorphic on $\C_+$, and therefore above reasoning shows that
\begin{equation}
  \label{eq:F-derivative}
  G_\lambda'(z) = -\lambda z^{a-1} E_{a,a}(-\lambda z^a)
  \qquad (z\in\C_+).
\end{equation}

\begin{proposition}
  \label{prop:ML-properties}
  For $0<a\le 1$, the following hold:
  \begin{enumerate}[(i)]
  \item There exists a constant $C_a>0$ (that depends on $a$) such
    that
    \begin{equation*}
      |E_{a,a}(z)| \le C_a
      \qquad(z\in\C\setminus\C_+).
    \end{equation*}
  \item Let $\lambda\ge 0$ and define a function
    $F_\lambda:(0,\infty)\to\C$ by
    \begin{equation*}
      F_\lambda(t):=t^{a-1}E_{a,a}(-\lambda t^a)
      \qquad(t>0).
    \end{equation*}
    Then the Laplace transform $\mathcal{L}F_\lambda(s)$ of
    $F_\lambda$ exists at every point $s\in\C_+$, and
    \begin{equation}
      \label{eq:F-Laplace}
      \mathcal{L}F_\lambda(s) = \frac{1}{s^a +\lambda}
      \qquad(s\in\C_+).
    \end{equation}
  \end{enumerate}
\end{proposition}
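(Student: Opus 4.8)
The plan is to prove both parts by fairly standard arguments about the Mittag--Leffler function $E_{a,a}$, using its integral/asymptotic behaviour.

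For part (i), I would argue that $E_{a,a}$ is entire (hence bounded on any compact set, in particular on the intersection of $\C\setminus\C_+$ with a large disk $\{|z|\le R\}$), so the only issue is the behaviour as $|z|\to\infty$ with $\re(z)\le 0$. Here I would invoke the classical asymptotic expansion of the Mittag--Leffler function for $0<a<1$: for $z$ in a sector bounded away from the positive real axis (in particular for $\Arg(z)\in[\pi/2,\pi]\cup[-\pi,-\pi/2]$, which contains the closed left half-plane), one has
\begin{equation*}
  E_{a,a}(z) = -\sum_{k=1}^{N}\frac{z^{-k}}{\Gamma(a-ak)} + O(|z|^{-N-1})
  \qquad (|z|\to\infty).
\end{equation*}
Since $\Gamma(a-a\cdot 1)=\Gamma(0)=\infty$, the $k=1$ term vanishes, so in fact $E_{a,a}(z)=O(|z|^{-2})\to 0$ on $\C\setminus\C_+$ as $|z|\to\infty$. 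Combined with continuity on the compact piece, this gives a uniform bound $C_a$. For the borderline case $a=1$ one has $E_{1,1}(z)=e^z$, which satisfies $|e^z|\le 1$ for $\re(z)\le 0$, so $C_1=1$ works. (If one prefers to avoid citing the asymptotics, the same decay can be extracted from the Hankel-type integral representation $E_{a,a}(z)=\frac{1}{2\pi i}\int_{H}\frac{\mu^{a-1}e^{\mu^a}}{\mu-z}\,d\mu$ along a suitable contour, but citing \cite{MR3244285} is cleaner.)

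For part (ii), I would compute the Laplace transform directly. For $\lambda\ge 0$ and $s\in\C_+$, the decay from part~(i) (or the classical large-$t$ asymptotics $F_\lambda(t)=O(t^{-a-1})$ when $\lambda>0$, and the exact identity $F_0(t)=t^{a-1}/\Gamma(a)$ when $\lambda=0$) together with the integrable singularity $t^{a-1}$ at $t=0$ shows that $\mathcal L F_\lambda(s)=\int_0^\infty e^{-st}t^{a-1}E_{a,a}(-\lambda t^a)\,dt$ converges absolutely. To evaluate it, I would expand $E_{a,a}$ in its defining power series, interchange sum and integral (justified by absolute convergence / Fubini for $\re(s)$ large, then extend by analytic continuation in $s$ to all of $\C_+$ since both sides are holomorphic there), and use $\int_0^\infty e^{-st}t^{ak+a-1}\,dt = \Gamma(ak+a)s^{-ak-a}$. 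This yields
\begin{equation*}
  \mathcal L F_\lambda(s) = \sum_{k=0}^\infty \frac{(-\lambda)^k}{\Gamma(ak+a)}\,\Gamma(ak+a)\,s^{-ak-a}
  = s^{-a}\sum_{k=0}^\infty (-\lambda s^{-a})^k = \frac{s^{-a}}{1+\lambda s^{-a}} = \frac{1}{s^a+\lambda},
\end{equation*}
where the geometric series converges once $\re(s)$ is large enough ($|\lambda s^{-a}|<1$), and the final closed form then extends to all $s\in\C_+$ by analytic continuation, noting $s^a+\lambda\ne 0$ there since $\re(s^a)>0$ for $s\in\C_+$ (as $\Arg(s^a)=a\Arg(s)\in(-\pi a/2,\pi a/2)\subset(-\pi/2,\pi/2)$) and $\lambda\ge 0$.

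The main obstacle, such as it is, is bookkeeping the domain of validity: the termwise Laplace-transform computation is only immediately licit for $\re(s)$ sufficiently large (so that the geometric series converges and Fubini applies with room to spare), and one must then argue by analytic continuation that $\mathcal L F_\lambda(s)=1/(s^a+\lambda)$ persists on all of $\C_+$. This requires knowing a priori that the Laplace integral converges (hence defines a holomorphic function) on all of $\C_+$, which is exactly where the decay estimate from part~(i) is used — so part~(i) is genuinely a prerequisite for the clean statement of part~(ii), and it is worth proving them in this order.
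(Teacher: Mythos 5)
Your proof is correct and takes essentially the same route as the paper: the paper simply cites Podlubny (his Theorem~1.6 for the sectorial boundedness in (i), and his formula~(1.80) for the Laplace transform on $\re s>\lambda^{1/a}$) and then, exactly as you do, uses the boundedness from (i) to see that $\mathcal{L}F_\lambda$ exists on all of $\C_+$ and concludes by uniqueness of analytic continuation. Your asymptotic-expansion argument for (i) and your termwise Laplace computation for (ii) are just the unpacked versions of those two citations.
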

\begin{proof}
  \begin{enumerate}
  \item For $a=1$ the boundedness is evident, because
    $E_{1,1}(z)=e^z$. For $0<a<1$, see Theorem~1.6
    in~\cite{MR1658022}.
  \item For $s\in\C$ with $\re s>\lambda^{1/a}$,
    formula~\eqref{eq:F-Laplace} is proved in~\cite{MR1658022} (cf.\
    formula~(1.80) there). By the boundedness of $E_{a,a}$ on
    $\C\setminus\C_+$, the Laplace transform of $F_\lambda$ exists on
    the whole half-plane $\C_+$. It follows from the uniqueness of
    analytic continuation that~\eqref{eq:F-Laplace} holds for every
    $s\in\C_+$.
  \end{enumerate}
\end{proof}

Recall that $0<\alpha\le 1$ and consider a complex-valued function
$y\in C^1([0,\infty))$. Here the space is the space of continuously
differentiable functions on $[0,\infty)$, with the derivative at the
left endpoint being the appropriate one-sided derivative. The
\emph{Caputo derivative of order $\alpha$} of $y$ at point
$t\in[0,\infty)$, denoted by $\D y(t)$, is defined as
\begin{equation}
  \label{eq:Caputo-derivative}
  \D y(t) :=
  \begin{cases}
    \frac{1}{\Gamma(1-\alpha)}\int_0^t (t-\tau)^{-\alpha}
    y'(\tau)\dtau,&
    0<\alpha<1,\\
    y'(t), & \alpha=1.
  \end{cases}
\end{equation}
In particular, if $\alpha=1$, then $\D y$ is just the standard first
order derivative of $y$.

Another commonly used fractional derivative is the Riemann--Liouville
fractional derivative. The \emph{Riemann--Liouville fractional
  derivative of order $\alpha$} of $y\in C^1([0,\infty))$ at point
$t\in[0,\infty)$, denoted by $\partial^\alpha_{t,\mathrm{RL}} y(t)$,
is defined by
\begin{equation}
  \label{eq:RL-fractional-derivative}
  \partial^\alpha_{t,\mathrm{RL}} y(t)
  :=
  \begin{cases}
    \frac{1}{\Gamma(1-\alpha)}\frac{d}{dt}\int_0^t(t-\tau)^{-\alpha}y(\tau)\dtau,&
    0<\alpha<1,\\
    y'(t), & \alpha=1.
  \end{cases}
\end{equation}

It is clear from~\eqref{eq:RL-fractional-derivative} that the
Riemann--Liouville derivative can be defined for a larger class of
functions than the continuously differentiable ones. It can also be
shown that
\begin{equation}
  \label{eq:Caputo-RL-equivalence}
  \D y(t) = \partial^\alpha_{t,\mathrm{RL}} (y(t)-y(0))
  \qquad(t\ge 0),
\end{equation}
(see, e.g.,\ Chapter~3 in~\cite{MR2680847}), and
often~\eqref{eq:Caputo-RL-equivalence} is in fact taken as the
definition of the Caputo derivative, because the right-hand side
of~\eqref{eq:Caputo-RL-equivalence} is defined for a larger class of
functions than~\eqref{eq:Caputo-derivative}.

In this paper we mainly consider continuously differentiable
functions $y\in C^1([0,\infty))$ with $y(0)=0$. For such
functions~\eqref{eq:Caputo-RL-equivalence} shows that the Caputo
fractional derivative and the Riemann--Liouville fractional derivative
coincide. For consistency of notation, we use the Caputo fractional
derivative $\D$ throughout the paper.

For a scalar nonhomogeneous linear fractional differential equation,
there are the following existence and uniqueness results 
(see~\cite{MR2680847}):
\begin{proposition}
  \label{prop:scalar-FDE}
  Let $0<\alpha\le 1$, $\lambda\in\R$, $b\in C^1_c((0,\infty))$, and
  consider the fractional differential equation
  \begin{subequations}
    \begin{align}
      \label{eq:scalar-FDE-problem}
      \D y(t) + \lambda y(t) &= b(t), && (t\ge 0),\\
      \label{eq:scalar-initial-value}
      y(0) &= 0.
    \end{align}
  \end{subequations}
  There exists a unique function $y\in C^1([0,\infty))$ for which
  equations~\eqref{eq:scalar-FDE-problem}
  and~\eqref{eq:scalar-initial-value} are valid.  This function can be
  represented as
  \begin{equation}
    \label{eq:FDE-variation-of-constants}
    y(t) = \int_0^t (t-\tau)^{\alpha-1}
    E_{\alpha,\alpha}(-\lambda (t-\tau)^\alpha) b(\tau)\dtau
    \qquad(t\ge 0).
  \end{equation}
\end{proposition}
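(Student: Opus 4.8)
The plan is to obtain existence together with the representation formula \eqref{eq:FDE-variation-of-constants} by converting \eqref{eq:scalar-FDE-problem}--\eqref{eq:scalar-initial-value} into an equivalent Volterra integral equation and solving it by a Neumann series, and to obtain uniqueness from a Gronwall-type iteration. Write $I^\mu v(t):=\frac{1}{\Gamma(\mu)}\int_0^t(t-\tau)^{\mu-1}v(\tau)\dtau$ for the Riemann--Liouville fractional integral of order $\mu>0$, with $I^0:=\mathrm{id}$, so that $\D y=I^{1-\alpha}y'$ for $y\in C^1([0,\infty))$. First I would record the reformulation: using the semigroup law $I^\mu I^\nu=I^{\mu+\nu}$ (valid on $C([0,\infty))$) and $I^1y'=y-y(0)$, one checks that $y\in C^1([0,\infty))$ solves \eqref{eq:scalar-FDE-problem}--\eqref{eq:scalar-initial-value} if and only if
\[
  y(t)+\lambda\,I^\alpha y(t)=I^\alpha b(t)\qquad(t\ge0).
\]
Applying $I^\alpha$ to \eqref{eq:scalar-FDE-problem} gives the forward implication; for the converse one evaluates this identity at $t=0$, where fractional integrals vanish since $|I^\mu v(t)|\le\|v\|_\infty t^\mu/\Gamma(\mu+1)$, to get $y(0)=0$, and then applies $I^{1-\alpha}$ and differentiates, using that $\tfrac{d}{dt}I^{1-\alpha}y=\partial^\alpha_{t,\mathrm{RL}}y$ coincides with $\D y$ once $y(0)=0$ (compare \eqref{eq:RL-fractional-derivative} and \eqref{eq:Caputo-RL-equivalence}).

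For existence I would solve the Volterra equation by Picard iteration, producing the Neumann series $y=\sum_{n=0}^\infty(-\lambda)^n I^{(n+1)\alpha}b$. The estimate $|I^{(n+1)\alpha}b(t)|\le\|b\|_\infty\,t^{(n+1)\alpha}/\Gamma((n+1)\alpha+1)$, proved by induction (one application of $I^\alpha$ at a time, using $\int_0^t(t-\tau)^{\mu-1}\tau^{\nu-1}\dtau=B(\mu,\nu)\,t^{\mu+\nu-1}$), shows that on each $[0,T]$ the series is dominated by a convergent numerical series, hence converges absolutely and uniformly. Interchanging summation and integration and rearranging, the kernel collapses, by the definition \eqref{eq:Mittag-Leffler} of the Mittag--Leffler function, to $(t-\tau)^{\alpha-1}E_{\alpha,\alpha}(-\lambda(t-\tau)^\alpha)$, so $y$ is exactly the right-hand side of \eqref{eq:FDE-variation-of-constants}; equivalently $y=F_\lambda*b$ with $F_\lambda$ as in Proposition~\ref{prop:ML-properties}. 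It then remains to check $y\in C^1([0,\infty))$ and $y(0)=0$. Letting $\tilde b\in C^1_c(\R)$ be the extension of $b$ by zero and noting $\supp b\subset[a_0,b_0]\subset(0,\infty)$, one has $y(t)=\int_0^\infty F_\lambda(\sigma)\,\tilde b(t-\sigma)\,d\sigma$; since $F_\lambda\in L^1_{\mathrm{loc}}([0,\infty))$ --- indeed $|F_\lambda(\sigma)|\le C_T\sigma^{\alpha-1}$ on $[0,T]$ because $E_{\alpha,\alpha}$ is continuous --- and $\tilde b'$ is continuous with compact support, differentiation under the integral sign is justified, yields the continuous function $y'(t)=\int_0^\infty F_\lambda(\sigma)\,\tilde b'(t-\sigma)\,d\sigma$, and shows $y(t)=0$ for $t\le a_0$, in particular $y(0)=0$.

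Uniqueness then follows from the reformulation: if $y_1,y_2$ both solve the problem, then $w:=y_1-y_2$ satisfies $w=-\lambda I^\alpha w$, hence $w=(-\lambda)^n I^{n\alpha}w$ for every $n$, and the same Beta-integral estimate gives $\|w\|_{C([0,T])}\le(|\lambda|T^\alpha)^n\,\|w\|_{C([0,T])}/\Gamma(n\alpha+1)$; letting $n\to\infty$ (so that the prefactor tends to $0$) forces $w\equiv0$ on every $[0,T]$. The case $\alpha=1$ is the classical linear first-order ODE, and $E_{1,1}(z)=e^z$ turns \eqref{eq:FDE-variation-of-constants} into the usual variation-of-constants formula.

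The hard part will be the $C^1$-regularity of $F_\lambda*b$ up to the endpoint $t=0$, where $F_\lambda$ has the integrable singularity $\sigma^{\alpha-1}$: organizing the convolution so that the time derivative falls on the smooth, compactly supported factor $b$ --- as above --- disposes of it, and the compact support of $b$ is also what makes the growth of $E_{\alpha,\alpha}$ irrelevant, so that only its continuity (not the sharper bound of Proposition~\ref{prop:ML-properties}(i)) is needed here.
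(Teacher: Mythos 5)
Your proof is correct, but it takes a genuinely different route from the paper's. The paper disposes of the proposition almost entirely by citation: uniqueness in the class $C^1([0,\infty))$ is quoted from Corollary~6.9 of~\cite{MR2680847}, the fact that \eqref{eq:FDE-variation-of-constants} solves \eqref{eq:scalar-FDE-problem}--\eqref{eq:scalar-initial-value} is quoted from Theorem~7.2 there, and the only thing checked by hand is that $F_\lambda*b$ is continuously differentiable, via ``standard properties of convolutions.'' You instead reprove the cited results from scratch: the equivalence with the Volterra equation $y+\lambda I^\alpha y=I^\alpha b$ (your use of $y(0)=0$ to pass between the Caputo and Riemann--Liouville derivatives is exactly the point the paper flags in \eqref{eq:Caputo-RL-equivalence}), existence via the Neumann series $\sum_n(-\lambda)^nI^{(n+1)\alpha}b$ collapsing to the Mittag--Leffler kernel, and uniqueness via the iterate bound $\|w\|\le(|\lambda|T^\alpha)^n\|w\|/\Gamma(n\alpha+1)$ --- which is essentially how the quoted results in~\cite{MR2680847} are themselves established. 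What your version buys is self-containedness and an explicit derivation of where $E_{\alpha,\alpha}$ comes from; what it costs is length for a result the paper treats as standard. The one step the two arguments share in substance is the endpoint regularity of $F_\lambda*b$ at $t=0$, and your way of handling it --- pushing the derivative onto the compactly supported factor $b$ so that only local integrability of $\sigma^{\alpha-1}$ and continuity of $E_{\alpha,\alpha}$ are needed --- is exactly the ``standard property of convolutions'' the paper has in mind. I see no gap.
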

\begin{proof}
  By Corollary~6.9 in~\cite{MR2680847}, there exists at most one
  continuously differentiable function for
  which~\eqref{eq:scalar-FDE-problem}
  and~\eqref{eq:scalar-initial-value} are valid.  Some standard
  properties of convolutions and the assumed regularity of $b$ imply
  that $y$ as defined by~\eqref{eq:FDE-variation-of-constants} is
  continuously differentiable. Theorem~{7.2} in ~\cite{MR2680847}
  states that this function satisfies~\eqref{eq:scalar-FDE-problem}
  and~\eqref{eq:scalar-initial-value}.
\end{proof}

Of course, in the case $\alpha=1$, the existence and uniqueness of a
solution to~\eqref{eq:scalar-FDE-problem}
and~\eqref{eq:scalar-initial-value} follow from the standard theory
of linear ordinary differential equations,
and~\eqref{eq:FDE-variation-of-constants} is just the variation of
parameters formula.

\subsection{The Laplace--Beltrami operator}
\label{sec:Laplace-Beltrami}

The Laplace--Beltrami operator $\Dg$ is an unbounded self-adjoint
operator on $\LM$ with domain of definition $\dom{(\Dg)}=H^2(M)$. The
operator is defined in local coordinates by
\begin{equation*}
  \Dg \xi := |g|^{-1/2}\partial_j(|g|^{1/2}g^{jk}\partial_k\xi)
  \qquad(\xi\in H^2(M)),
\end{equation*}
where $|g|$ is the determinant of the metric $g$ and $(g^{jk})$ is the
inverse matrix of $g$. Here and below, we use Einstein's summation convention and sum over indexes appearing as sub- and superindexes.

Let $0=\lambda_1<\lambda_2\le\lambda_3\le\cdots$ be the eigenvalues of
$-\Dg$, listed according to their multiplicities, and let
$(\varphi_k)_{k=1}^\infty$ be some complete orthonormal sequence of associated
eigenfunctions. The exponent $(-\Dg)^\beta$ of $(-\Dg)$ is then
defined by 
\begin{equation*}
  (-\Dg)^\beta\xi :=
  \sum_{k=1}^\infty \lambda_k^\beta\bra \xi,\varphi_k\ket_\LM\varphi_k
  \qquad(\xi\in\LM),
\end{equation*}
(see \cite{lions2012non}) with domain
\begin{equation*}
  \dom((-\Dg)^\beta) :=
  \left\{\xi\in\LM:
    \sum_{k=1}^\infty\lambda_k^{2\beta}|\bra\xi,\varphi_k\ket_\LM|^2<\infty\right\}
  = H^{2\beta}(M).
\end{equation*}

\subsection{Fractional derivatives of
  \texorpdfstring{$\LM$}{L\^{}2(M)}-valued functions and the strong
  solution of\texorpdfstring{~\eqref{eq:FDE}}{FDE}}

Let $0<T\le\infty$ and recall that $0<\alpha\le 1$. Let
$y\in C^1([0,T);\LM)$. The \emph{Caputo
  derivative of order $\alpha$} of the $\LM$-valued function $y$ at
point $t\in[0,T)$, denoted by $\D y(t)$, is defined analogously to the
scalar case:
\begin{equation}
  \label{eq:Caputo}
  \D y(t) :=
  \begin{cases}
    \frac{1}{\Gamma(1-\alpha)}\int_0^t (t-\tau)^{-\alpha}
    y'(\tau)\dtau,&
    0<\alpha<1,\\
    y'(t), & \alpha=1.
  \end{cases}
\end{equation}
Here the derivative $y'$ is in the sense of the derivative of an
$\LM$-valued function of a real variable, and the integral is in the
sense of Bochner. Note that the existence of the integral is
guaranteed by the assumption of continuous differentiability of $y$.

We are now ready to give a definition of a solution of the fractional
diffusion equation~\eqref{eq:FDE}. Let $f:[0,\infty)\to\LM$. We say
that a function $u\in C^1([0,\infty);\LM)$ is a \emph{strong solution
  of the fractional diffusion equation~\eqref{eq:FDE}}, if
\begin{enumerate}[(i)]
\item $u(0)=0$,
\item $u(t)\in\dom((-\Dg)^\beta)$ for every $t\ge 0$, and
\item $\D u(t) + (-\Dg)^\beta u(t) = f(t)$ for every $t\ge 0$.
\end{enumerate}

\section{Analysis of the direct problem}
\label{sec:direct-problem}

Here we prove an existence and uniqueness result for the fractional
diffusion equation~\eqref{eq:FDE}. We also establish a representation
of the solution, which will later be used in solving the inverse
problem. Furthermore, we define the local source-to-solution operator
$\LSS$.

\subsection{Uniqueness and existence of a strong solution}
\label{sec:FDE-solution}

Let $0=\lambda_1<\lambda_2\le\lambda_3\le\cdots$ be the eigenvalues of
$-\Dg$, listed according to their multiplicities, and let
$(\varphi_k)_{k=1}^\infty\subset C^\infty(M)$ be some complete
orthonormal sequence of corresponding eigenfunctions.

\begin{proposition}
  \label{prop:existence-and-uniqueness}
  Suppose that $f\in C_c^2((0,\infty);\LM)$. Then there exists a
  unique strong solution $u\in C^1([0,\infty);\LM)$ of the fractional
  diffusion equation~\eqref{eq:FDE}. The strong solution can be
  represented as
  \begin{equation}
    \label{eq:u}
    u(t) = \sum_{k=1}^\infty u_k(t)\,\varphi_k
    \qquad (t\ge 0),
  \end{equation}
  where the series converges in $\LM$ for every $t\ge 0$, and
  \begin{equation}
    \label{eq:u-k}
    u_k(t) := \int_0^t (t-\tau)^{\alpha-1}
    E_{\alpha,\alpha}(-\lambda_k^\beta(t-\tau)^\alpha) \,\bra
    f(\tau),\varphi_k\ket_\LM\dtau
    \qquad(t\ge 0).
  \end{equation}
\end{proposition}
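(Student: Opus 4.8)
The plan is to decompose the problem along the eigenbasis $(\varphi_k)$ of $-\Dg$, reduce equation~\eqref{eq:FDE} to a countable family of scalar fractional ODEs, solve each one with Proposition~\ref{prop:scalar-FDE}, and then show that the resulting series~\eqref{eq:u} genuinely defines a strong solution in the sense of (i)--(iii) of Section~\ref{sec:preliminaries}. For uniqueness, suppose $u$ is any strong solution; since $u(t)\in\dom((-\Dg)^\beta)\subset\LM$ for each $t$, we may write $u(t)=\sum_k u_k(t)\varphi_k$ with $u_k(t)=\bra u(t),\varphi_k\ket_\LM$. I would first check that the map $t\mapsto\bra u(t),\varphi_k\ket_\LM$ is $C^1$ on $[0,\infty)$ and commutes with the Caputo integral (this follows because the Bochner integral commutes with the bounded linear functional $\bra\,\cdot\,,\varphi_k\ket$), and that $\bra(-\Dg)^\beta u(t),\varphi_k\ket=\lambda_k^\beta\bra u(t),\varphi_k\ket$ by self-adjointness of the (spectrally defined) operator $(-\Dg)^\beta$ on its domain. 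Pairing condition (iii) with $\varphi_k$ then yields exactly the scalar problem $\D u_k+\lambda_k^\beta u_k=\bra f(\cdot),\varphi_k\ket$ with $u_k(0)=0$, and $\bra f(\cdot),\varphi_k\ket\in C_c^2((0,\infty))\subset C_c^1((0,\infty))$, so Proposition~\ref{prop:scalar-FDE} forces $u_k$ to equal the expression~\eqref{eq:u-k}. Hence any strong solution has the claimed Fourier coefficients, which gives uniqueness.

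For existence I would define $u_k$ by~\eqref{eq:u-k} and set $u(t):=\sum_k u_k(t)\varphi_k$, then establish the three facts: the series converges in $\LM$ for each $t$; $u\in C^1([0,\infty);\LM)$; and $u(t)\in\dom((-\Dg)^\beta)$ with~\eqref{eq:FDE-a} holding. The key quantitative input is the uniform bound $|E_{\alpha,\alpha}(-\lambda_k^\beta(t-\tau)^\alpha)|\le C_\alpha$ from Proposition~\ref{prop:ML-properties}(i), valid since $-\lambda_k^\beta(t-\tau)^\alpha\in\C\setminus\C_+$ for $\lambda_k\ge0$ and $t\ge\tau$. To gain decay in $k$ I would integrate by parts in~\eqref{eq:u-k} (moving derivatives onto $f$, using $f$ compactly supported in time so boundary terms vanish) and use the identity $\frac{d}{dt}\big[(t-\tau)^\alpha E_{\alpha,\alpha+1}(-\lambda_k^\beta(t-\tau)^\alpha)\big]$-type relations coming from~\eqref{eq:F-derivative} together with the recurrence $E_{\alpha,1}'=\alpha^{-1}E_{\alpha,\alpha}$; after one or two integrations by parts one extracts factors of $\lambda_k^{-\beta}$, so that $|u_k(t)|\le C\,\lambda_k^{-\beta}\sup_\tau\big(|f_k''(\tau)|+|f_k'(\tau)|+|f_k(\tau)|\big)$ where $f_k(\tau):=\bra f(\tau),\varphi_k\ket$. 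Since $f(\tau),f'(\tau),f''(\tau)\in\LM$ we have $\sum_k|f_k^{(j)}(\tau)|^2<\infty$; combining this with $\sum_k\lambda_k^{-2\beta}<\infty$ — which holds because Weyl's law gives $\lambda_k\sim c\,k^{2/\dim M}$ and $2\beta\cdot\frac{2}{\dim M}>\frac{2}{\dim M}$ needs care, so in fact one wants the stronger estimate $\lambda_k^{2\beta}|u_k(t)|^2$ summable, obtained by pushing the integration by parts to gain $\lambda_k^{-2\beta}$ from two applications — one shows both $\sum_k|u_k(t)|^2<\infty$ and $\sum_k\lambda_k^{2\beta}|u_k(t)|^2<\infty$, i.e.\ $u(t)\in\dom((-\Dg)^\beta)=H^{2\beta}(M)$.

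For the $C^1$-in-time regularity I would differentiate~\eqref{eq:u-k} under the integral sign, again using~\eqref{eq:F-derivative}, to get a formula for $u_k'(t)$ of the same structural type, and then run the same integration-by-parts/Mittag-Leffler-bound argument to obtain a uniform-in-$t$-on-compacts, $\ell^2$-in-$k$ bound for $u_k'$; standard term-by-term differentiation theorems for series of Banach-space-valued functions then give $u\in C^1([0,\infty);\LM)$ with $u'(t)=\sum_k u_k'(t)\varphi_k$. Verifying~\eqref{eq:FDE-a} is then a matter of applying $\D$ term-by-term (justified by uniform convergence of the relevant integrals, since $\D$ is itself an integral operator and the coefficient functions $f_k$ have uniform compact support), using that each $u_k$ solves its scalar equation, and reassembling: $\D u(t)+(-\Dg)^\beta u(t)=\sum_k(\D u_k+\lambda_k^\beta u_k)\varphi_k=\sum_k f_k(t)\varphi_k=f(t)$. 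Finally $u(0)=\sum_k u_k(0)\varphi_k=0$ since each $u_k(0)=0$.

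\textbf{Main obstacle.} The routine parts are the scalar solvability and the formal eigenfunction expansion; the genuine work is the uniform decay estimate on $u_k$ and $u_k'$ that simultaneously delivers $\ell^2$-summability against the weights $1$ and $\lambda_k^{2\beta}$, uniformly for $t$ in compact intervals. The delicacy is that the naive bound from Proposition~\ref{prop:ML-properties}(i) alone gives no $k$-decay at all, so one must integrate by parts in $\tau$ the right number of times — exploiting $f\in C_c^2$ and the differentiation identity~\eqref{eq:F-derivative} — to trade each derivative of $f$ for a factor $\lambda_k^{-\beta}$, while keeping all Mittag--Leffler factors in the region $\C\setminus\C_+$ where they stay bounded. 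Getting the bookkeeping of these boundary terms and the interchange of $\D$ (itself a singular integral when $\alpha<1$) with the infinite sum correct is where the care is needed; once the estimate $\lambda_k^{\beta}|u_k(t)|+\lambda_k^{\beta}|u_k'(t)|\le C(K)\,\|f_k\|_{C^2}$ with $\sum_k\lambda_k^{2\beta}\|f\text{-coeff}\|^2$-type control is in hand, everything else follows from standard Hilbert-space-valued calculus recalled in the Appendix.
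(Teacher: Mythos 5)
Your overall skeleton — eigenfunction expansion, reduction to the scalar problems of Proposition~\ref{prop:scalar-FDE}, uniqueness by projecting a strong solution onto each $\varphi_k$, and then summability estimates to justify convergence, $C^1$-regularity, membership in $\dom((-\Dg)^\beta)$, and term-by-term application of $\D$ — is exactly the paper's route. The gap is in the one step you yourself flag as ``the genuine work'': the quantitative estimate on $u_k$. Your first plan (gain one factor $\lambda_k^{-\beta}$ per integration by parts and then invoke $\sum_k\lambda_k^{-2\beta}<\infty$) fails because that series converges only when $4\beta>\dim M$ by Weyl's law, which is false for general $\beta\le 1$ and $\dim M\ge 2$. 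Your fallback — a second integration by parts to extract $\lambda_k^{-2\beta}$ — also fails: after one integration by parts the kernel becomes $1-E_{\alpha,1}(-\lambda_k^\beta(t-\tau)^\alpha)$, whose primitive in $\tau$ is of the form $(t-\tau)\bigl(1-E_{\alpha,2}(-\lambda_k^\beta(t-\tau)^\alpha)\bigr)$, so the second integration by parts does \emph{not} produce another factor $\lambda_k^{-\beta}$ against a still-bounded kernel; this is consistent with the fact that the solution gains only $2\beta$ spatial derivatives ($u(t)\in H^{2\beta}(M)$), not $4\beta$. Finally, even granting your target bound, the quantity $\sum_k\sup_\tau|\bra f''(\tau),\varphi_k\ket_\LM|^2$ is not controlled by $f\in C^2_c((0,\infty);\LM)$: Parseval bounds $\sum_k|\bra f''(\tau),\varphi_k\ket|^2$ for each fixed $\tau$, but the supremum cannot be moved inside the sum.

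The correct bookkeeping (the paper's Lemma~\ref{lemma:u-k-estimate}) needs only \emph{one} integration by parts and no second gain of $\lambda_k^{-\beta}$: using \eqref{eq:F-derivative} and $f(0)=0$ one gets
\begin{equation*}
  \lambda_k^\beta u_k(t)=\int_0^t\bigl(1-E_{\alpha,1}(-\lambda_k^\beta(t-\tau)^\alpha)\bigr)\bra f'(\tau),\varphi_k\ket_\LM\dtau,
\end{equation*}
where the kernel lies in $[0,1]$ by complete monotonicity of $\tau\mapsto E_{\alpha,1}(-\lambda\tau^\alpha)$. Cauchy--Schwarz \emph{in $\tau$, keeping the time integral of $|\bra f',\varphi_k\ket|^2$} then gives $\lambda_k^{2\beta}|u_k(t)|^2\le \min\{t,T\}\int_0^t|\bra f'(\tau),\varphi_k\ket_\LM|^2\dtau$, and summing over $k$ via Parseval applied to $f'(\tau)\in\LM$ yields $\sum_k\lambda_k^{2\beta}|u_k(t)|^2\le t\int_0^t\|f'(\tau)\|_\LM^2\dtau<\infty$. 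Since $\lambda_k\to\infty$, this single inequality delivers both $\sum_k|u_k(t)|^2<\infty$ and $u(t)\in\dom((-\Dg)^\beta)$ at once; the identical estimate with $u_k'$ and $f''$ (valid because $u_k$ is a convolution) gives the locally uniform convergence of $\sum_k u_k'\varphi_k$ and hence $u\in C^1([0,\infty);\LM)$. With that estimate in place, the rest of your outline (termwise Caputo derivative via Proposition~\ref{prop:integral}, reassembly of the equation, $u(0)=0$) goes through as you describe.
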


The proposition is proved by an eigenfunction expansion analogously
to~\cite{sakamoto2011initial}. As we use spectral theoretical approach
to consider direct and inverse problem for fractional power operator
$(-\Dg)^\beta$, instead of theory of integral operators, we provide
the detailed proof for the convenience of the reader.

The proof is split in several steps, starting with uniqueness of the
solution, which holds without any assumptions on the source function
$f$.
\begin{proposition}
  \label{prop:FDE-uniqueness}
  There exists at most one strong solution of the fractional diffusion
  equation~\eqref{eq:FDE}.
\end{proposition}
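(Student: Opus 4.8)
The plan is to prove uniqueness by linearity: if $u_1$ and $u_2$ are both strong solutions of~\eqref{eq:FDE} with the same source $f$, then $w := u_1 - u_2$ is a strong solution with zero source, and we must show $w \equiv 0$. To exploit the spectral decomposition, first I would fix an eigenfunction $\varphi_k$ of $-\Dg$ and define the scalar function $w_k(t) := \bra w(t), \varphi_k\ket_\LM$. Since $w \in C^1([0,\infty);\LM)$, the function $w_k$ is in $C^1([0,\infty))$, and because the inner product is continuous and linear, $w_k'(t) = \bra w'(t), \varphi_k\ket_\LM$; consequently the Bochner integral defining $\D w(t)$ commutes with $\bra\,\cdot\,,\varphi_k\ket_\LM$, giving $\D w_k(t) = \bra \D w(t), \varphi_k\ket_\LM$.

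Next I would use the self-adjointness of $(-\Dg)^\beta$ together with the fact that $\varphi_k$ is an eigenfunction: since $w(t) \in \dom((-\Dg)^\beta)$ for every $t$, we have
\begin{equation*}
  \bra (-\Dg)^\beta w(t), \varphi_k\ket_\LM = \bra w(t), (-\Dg)^\beta\varphi_k\ket_\LM = \lambda_k^\beta \bra w(t), \varphi_k\ket_\LM = \lambda_k^\beta w_k(t).
\end{equation*}
Pairing the equation $\D w(t) + (-\Dg)^\beta w(t) = 0$ with $\varphi_k$ therefore yields the scalar fractional ODE $\D w_k(t) + \lambda_k^\beta w_k(t) = 0$ for all $t \ge 0$, together with the initial condition $w_k(0) = \bra w(0),\varphi_k\ket_\LM = 0$. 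By the uniqueness part of Proposition~\ref{prop:scalar-FDE} (applied with $\lambda = \lambda_k^\beta \ge 0$ and source $b \equiv 0$, whose unique solution by~\eqref{eq:FDE-variation-of-constants} is the zero function), we conclude $w_k \equiv 0$.

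Finally, since $(\varphi_k)_{k=1}^\infty$ is a complete orthonormal sequence in $\LM$ and $\bra w(t),\varphi_k\ket_\LM = w_k(t) = 0$ for every $k$ and every $t \ge 0$, it follows that $w(t) = 0$ in $\LM$ for every $t \ge 0$, i.e.\ $u_1 = u_2$. The main thing to be careful about is the interchange of the Bochner integral with the bounded linear functional $\bra\,\cdot\,,\varphi_k\ket_\LM$ in the definition of the Caputo derivative — but this is a standard property of the Bochner integral, and the continuous differentiability of $w$ (hence of the integrand $(t-\tau)^{-\alpha}w'(\tau)$ as an integrable $\LM$-valued function) ensures everything is well-defined. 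I do not expect any genuine obstacle here; the argument is a routine reduction to the scalar case via the spectral basis, and the hypotheses were tailored to make exactly this work.
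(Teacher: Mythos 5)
Your proposal is correct and follows essentially the same route as the paper: reduce by linearity to the zero-source case, project onto each eigenfunction $\varphi_k$ (commuting the Caputo derivative with $\bra\,\cdot\,,\varphi_k\ket_\LM$ exactly as the paper does via Proposition~\ref{prop:integral}), obtain the scalar problem $\D v_k+\lambda_k^\beta v_k=0$, $v_k(0)=0$, and invoke the uniqueness part of Proposition~\ref{prop:scalar-FDE}. The only cosmetic difference is that you justify $\bra(-\Dg)^\beta w,\varphi_k\ket_\LM=\lambda_k^\beta w_k$ via self-adjointness rather than directly from the spectral definition of $(-\Dg)^\beta$; both are immediate.
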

\begin{proof}
  Suppose ${u,\widetilde{u}}:[0,\infty)\to\LM$ are two strong
  solutions of the fractional diffusion equation~\eqref{eq:FDE}, and
  define $v:=u-\widetilde{u}$. Then $v$ is a strong solution
  of~\eqref{eq:FDE} with the zero source term.

  Fix $k\in\Zp:=\{1,2,3,\ldots\}$ and define a complex-valued function
  $v_k:[0,\infty)\to\C$ by $v_k(t):=\bra v(t),\varphi_k\ket_\LM$. It
  is evident that $v_k\in C^1([0,\infty))$ and
  $v_k' = \bra v',\varphi_k\ket_\LM$, because
  $v\in C^1([0,\infty);\LM)$ by the definition of a strong
  solution. If $0<\alpha<1$, combining the previous result with
  Proposition~\ref{prop:integral} shows that
  \begin{equation*}
    \bra \D v(t),\varphi_k\ket_\LM
    = \frac{1}{\Gamma(1-\alpha)}\int_0^t(t-\tau)^{-\alpha}v_k'(\tau)\dtau
    = \D v_k(t)
    \qquad(t\ge 0).
  \end{equation*}
  
  Note that $v(t)\in\dom((-\Dg)^\beta)$. Then the definition of
  $(-\Dg)^\beta$ implies that
  \begin{equation*}
    \bra(-\Dg)^\beta v(t),\varphi_k\ket_\LM = \lambda_k^\beta v_k(t)
    \qquad(t\ge 0).
  \end{equation*}
  Above considerations show that
  \begin{equation}
    \label{eq:v_k-scalar-FDE}
    \begin{aligned}
      \D v_k(t) + \lambda_k^\beta v_k(t) &= 0, \qquad (t\ge 0)\\
      v_k(0) &= 0.
    \end{aligned}
  \end{equation}

  By Proposition~\ref{prop:scalar-FDE} the unique continuous solution
  of~\eqref{eq:v_k-scalar-FDE} is the zero function. Because $k\in\Zp$
  is arbitrary, $v=0$, and therefore $u=\widetilde{u}$.
\end{proof}

Following lemma provides useful estimates for the component functions
$u_k$ of $u$.

\begin{lemma}
  \label{lemma:u-k-estimate}
  Let $T>0$ and $f\in C_c^2((0,T);\LM)$, and let $u_k$ be defined
  by~\eqref{eq:u-k}. Then the following hold:
  \begin{enumerate}[(i)]
  \item The functions $u_k$ satisfy $u_k\in C^1([0,\infty))$, and
    there exists a constant $\epsilon>0$ such that
    $\supp u_k\subset(\epsilon,\infty)$, for every $k\in\Zp$.
  \item For every $k\in\Zp$, it holds that 
    \begin{equation}
      \
      \label{eq:u-k-equality}
      \lambda_k^\beta u_k(t) = \int_0^t
      \big(1-E_{\alpha,1}(-\lambda_k^\beta(t-\tau)^\alpha)\big)\bra
      f'(\tau),\varphi_k\ket_\LM\dtau
      \qquad (t\ge 0),
    \end{equation}
    and
    \begin{equation}
      \label{eq:u-k-inequality}
      \lambda_k^{2\beta}|u_k(t)|^2
      \le \min\{t,T\}\int_0^t |\bra f'(\tau),\varphi_k\ket_\LM|^2\dtau
      \qquad (t\ge 0).
    \end{equation}
  \item If in \eqref{eq:u-k-equality} and \eqref{eq:u-k-inequality}
    the functions $u_k$ and $f'$ are replaced by $u_k'$ and $f''$,
    respectively, \eqref{eq:u-k-equality} and
    \eqref{eq:u-k-inequality} remain valid.
  \end{enumerate}
\end{lemma}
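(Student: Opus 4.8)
The plan is to reduce everything to scalar facts by fixing $k\in\Zp$ and setting $b_k:=\bra f(\cdot),\varphi_k\ket_\LM$, so that \eqref{eq:u-k} is precisely the variation‑of‑constants formula \eqref{eq:FDE-variation-of-constants} for the scalar equation $\D y+\lambda_k^\beta y=b_k$, $y(0)=0$. Since $f\in C_c^2((0,T);\LM)$, the function $b_k$ lies in $C_c^2((0,T))\subset C_c^1((0,\infty))$ with $b_k(0)=b_k'(0)=0$; hence Proposition~\ref{prop:scalar-FDE} gives $u_k\in C^1([0,\infty))$, which is the first assertion of (i). For the support claim, pick $\epsilon>0$ smaller than $\inf(\supp f)$ (this infimum is positive because $\supp f$ is a compact subset of $(0,T)$); then $b_k(\tau)=0$ for $\tau\le\epsilon$, and since the integrand in \eqref{eq:u-k} vanishes wherever $b_k(\tau)=0$, we get $u_k(t)=0$ for $t\le\epsilon$. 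Crucially $\epsilon$ depends only on $\supp f$, not on $k$.

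For (ii), the engine is the derivative identity \eqref{eq:F-derivative}: with $\lambda:=\lambda_k^\beta$ one has $\lambda(t-\tau)^{\alpha-1}E_{\alpha,\alpha}(-\lambda(t-\tau)^\alpha)=-G_\lambda'(t-\tau)=\tfrac{d}{d\tau}\bigl[G_\lambda(t-\tau)\bigr]$, where $G_\lambda(s)=E_{\alpha,1}(-\lambda s^\alpha)$ is continuous on $[0,\infty)$ with $G_\lambda(0)=E_{\alpha,1}(0)=1$. Multiplying \eqref{eq:u-k} by $\lambda$ and integrating by parts in $\tau$ — which is legitimate because $\tau\mapsto G_\lambda(t-\tau)$ is $C^1$ on $[0,t)$, bounded up to $\tau=t$, and $\tfrac{d}{d\tau}[G_\lambda(t-\tau)]\,b_k(\tau)$ is integrable by Proposition~\ref{prop:ML-properties}(i) — the boundary term at $\tau=0$ vanishes since $b_k(0)=0$, the one at $\tau=t$ equals $G_\lambda(0)b_k(t)=b_k(t)$, and we are left with $\lambda u_k(t)=b_k(t)-\int_0^t G_\lambda(t-\tau)b_k'(\tau)\dtau$. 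Writing $b_k(t)=\int_0^t b_k'(\tau)\dtau$ (again $b_k(0)=0$) and $G_\lambda(t-\tau)=E_{\alpha,1}(-\lambda_k^\beta(t-\tau)^\alpha)$ gives \eqref{eq:u-k-equality}. For \eqref{eq:u-k-inequality} I would invoke the classical bound $0\le E_{\alpha,1}(-x)\le 1$ valid for $x\ge 0$ and $0<\alpha\le 1$ (see \cite{MR1658022,MR3244285}), so that $|1-E_{\alpha,1}(-\lambda_k^\beta(t-\tau)^\alpha)|\le 1$; since moreover $b_k'$ is supported in $(0,T)$, the $\tau$‑integral in \eqref{eq:u-k-equality} is effectively over $(0,\min\{t,T\})$, and Cauchy–Schwarz yields $\lambda_k^{2\beta}|u_k(t)|^2\le\min\{t,T\}\int_0^t|b_k'(\tau)|^2\dtau$. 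The only non‑elementary ingredient, and the point I expect to need the most care, is this uniform bound on the Mittag–Leffler function $E_{\alpha,1}$ on the negative real axis (equivalently, its complete monotonicity); everything else is a direct computation once the integration by parts is justified across the $(t-\tau)^{\alpha-1}$ singularity.

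For (iii), I would first record that $u_k'$ has the same convolution form as $u_k$ with $b_k$ replaced by $b_k'$. Substituting $\sigma=t-\tau$ in \eqref{eq:u-k} gives $u_k(t)=\int_0^t F_\lambda(\sigma)\,b_k(t-\sigma)\,d\sigma$ with $F_\lambda(\sigma)=\sigma^{\alpha-1}E_{\alpha,\alpha}(-\lambda\sigma^\alpha)$ locally integrable; differentiating this convolution (valid since $b_k\in C^1$ and $F_\lambda\in L^1_{\mathrm{loc}}$) and using $b_k(0)=0$ to kill the boundary term yields $u_k'(t)=\int_0^t(t-\tau)^{\alpha-1}E_{\alpha,\alpha}(-\lambda(t-\tau)^\alpha)\,b_k'(\tau)\dtau$. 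Now $b_k'\in C^1$ satisfies $b_k'(0)=0$ and $b_k''=\bra f''(\cdot),\varphi_k\ket_\LM$ is supported in $(0,T)$, so the argument of part (ii) applies verbatim with the triple $(u_k,b_k,b_k')$ replaced by $(u_k',b_k',b_k'')$, giving the analogues of \eqref{eq:u-k-equality} and \eqref{eq:u-k-inequality} with $u_k$ and $f'$ replaced by $u_k'$ and $f''$. This finishes the lemma.
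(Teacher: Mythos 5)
Your proposal is correct and follows essentially the same route as the paper: the convolution representation for (i) and (iii), integration by parts via the identity $\tfrac{d}{d\tau}E_{\alpha,1}(-\lambda(t-\tau)^\alpha)=\lambda(t-\tau)^{\alpha-1}E_{\alpha,\alpha}(-\lambda(t-\tau)^\alpha)$ for \eqref{eq:u-k-equality}, and Cauchy--Schwarz together with $0\le E_{\alpha,1}(-x)\le 1$ (complete monotonicity) for \eqref{eq:u-k-inequality}. The only cosmetic differences are that you obtain $u_k\in C^1$ by citing Proposition~\ref{prop:scalar-FDE} rather than directly from the convolution form, and you do not separately set aside the case $\lambda_k=0$, which the paper dispatches first but which your computation also handles since both sides of \eqref{eq:u-k-equality} degenerate to $0=0$ there.
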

\begin{proof}
  \begin{enumerate}[(i)]
  \item If $\epsilon>0$ is small enough so that $f(t)=0$ for
    $0\le t\le2\epsilon$, then $u_k(t)=0$ for $0\le
    t\le2\epsilon$. Therefore $\supp u_k\subset(\epsilon ,\infty)$.
    
    Define
    \begin{equation}
      \label{eq:F_k}
      F_k(t):=
      \begin{cases}
        t^{\alpha-1}E_{\alpha,\alpha}(-\lambda_k^\beta t^\alpha), & t>0,\\
        0, & t\le 0.
      \end{cases}
    \end{equation}
    Then $F_k$ is locally integrable, and $u_k$ is the convolution
    \begin{equation}
      \label{eq:u-k-convolution}
      u_k(t) = \big(F_k*\bra f,\varphi_k\ket_\LM\big)(t)
      \qquad(t\ge 0).
    \end{equation}
    It follows that $u_k\in C^1([0,\infty))$.
  \item If $\lambda_k=0$, then both sides of~\eqref{eq:u-k-equality}
    vanish, and~\eqref{eq:u-k-inequality} holds trivially. Therefore
    we may assume $\lambda_k>0$.

    Note that as $f(0)=0$, \eqref{eq:F-derivative} and integration by
    parts show that
    \begin{equation*}
      \begin{split}
        \lambda_k^\beta u_k(t)%
        &= \lim_{\epsilon\to 0^+} \int_0^{t-\epsilon}
        \lambda_k^\beta(t-\tau)^{\alpha-1}
        E_{\alpha,\alpha}(-\lambda_k^\beta(t-\tau)^\alpha) \bra
        f(\tau),\varphi_k\ket_\LM\dtau\\
        &= \lim_{\epsilon\to 0^+} E_{\alpha,1}(-\lambda_k^\beta
        \epsilon^\alpha)\bra f(t-\epsilon),\varphi_k\ket_\LM\\
        &\qquad - \int_0^{t-\epsilon}
        E_{\alpha,1}(-\lambda_k^\beta(t-\tau)^\alpha) \bra
        f'(\tau),\varphi_k\ket_\LM\dtau\\
        &= \bra f(t),\varphi_k\ket_\LM - \int_0^t
        E_{\alpha,1}(-\lambda_k^\beta(t-\tau)^\alpha) \bra
        f'(\tau),\varphi_k\ket_\LM\dtau.
      \end{split}
    \end{equation*}
    By combining above with the fact that
    \begin{equation}
      \label{eq:f-ODE}
      \bra f(t),\varphi_k\ket_\LM
      = \int_0^t \bra f'(\tau),\varphi_k\ket_\LM\dtau,
    \end{equation}
    we obtain~\eqref{eq:u-k-equality}.

    Set $t^*:=\min\{{t,T}\}$. The Cauchy--Schwarz inequality applied
    to~\eqref{eq:u-k-equality} shows that
    \begin{equation}
      \label{eq:C-S-to-u-k-equality}
      \lambda_k^{2\beta}|u_k(t)|^2
      \le \int_0^{t^*}|1-E_{\alpha,1}(-\lambda_k^\beta(t-\tau)^\alpha)|^2\dtau
      \int_0^t |\bra f'(\tau),\varphi_k\ket_\LM|^2\dtau.
    \end{equation}

    The function
    \begin{equation*}
      (0,\infty)\ni\tau \mapsto M(\tau) 
      := E_{\alpha,1}(-\lambda_k^\beta\tau^\alpha)
    \end{equation*}
    is completely monotonic, meaning that
    $(-1)^k (\frac{d}{d\tau})^k M(\tau)\ge 0$ for $k=0,1,2,\ldots$ and
    $\tau>0$ (for $\alpha=1$ this is immediate from differentiating
    the exponential function, for $0<\alpha<1$ see Theorem~7.3 in~\cite{MR2680847}). In particular
    \begin{equation*}
      0\le M(\tau)\le M(0)=1
      \qquad(\tau>0).
    \end{equation*}
    It follows that the first integrand
    in~\eqref{eq:C-S-to-u-k-equality} has values in $[0,1]$, and
    {(ii)} is proved.

  \item From~\eqref{eq:u-k-convolution} and properties of convolution
    it follows that~\eqref{eq:u-k} holds if $u_k$ and $f$ are
    substituted by $u_k'$ and $f'$, respectively. This implies that
    \eqref{eq:u-k-equality} and \eqref{eq:u-k-inequality} also hold
    under the same substitution.
  \end{enumerate}
\end{proof}

Next two lemmas prepare for the proof of
Proposition~\ref{prop:existence-and-uniqueness}.
\begin{lemma}
  \label{lemma:u-is-well-defined}
  The series~\eqref{eq:u} converges in $\LM$ for every $t\ge 0$. The
  limit function $u$ is in $C^1([0,\infty);\LM)$, and
  \begin{equation}
    \label{eq:u-derivative}
    u'(t) = \sum_{k=1}^\infty u'_k(t)\varphi_k
    \qquad(t\ge 0),
  \end{equation}
  where the convergence is pointwise in $\LM$. Moreover,
  $u(t)\in\dom((-\Dg)^\beta)$ for every $t\ge 0$, and
  $\supp u\subset(0,\infty)$.
\end{lemma}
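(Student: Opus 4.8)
The plan is to read everything off the two estimates in Lemma~\ref{lemma:u-k-estimate}(ii)--(iii) together with Parseval's identity. Write $S_N(t):=\sum_{k=1}^{N}u_k(t)\varphi_k$ for the partial sums of~\eqref{eq:u}. Since $f\in C_c^2((0,T);\LM)$, Parseval's identity gives, for every $t\ge 0$,
\[
  \sum_{k=1}^{\infty}\int_0^t\big|\bra f'(\tau),\varphi_k\ket_\LM\big|^2\dtau=\int_0^t\|f'(\tau)\|_\LM^2\dtau<\infty,
\]
and the same formula with $f''$ in place of $f'$. These two finite quantities will control the tails of all the series that appear.

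First I would prove convergence of~\eqref{eq:u} in $\LM$ for fixed $t$ and membership in the domain. Since $(M,g)$ is connected and compact, $\lambda_1=0$ while $\lambda_k\ge\lambda_2>0$ for $k\ge 2$, so (as $\beta>0$) $\lambda_k^{-2\beta}\le\lambda_2^{-2\beta}$ there. For integers $1\le N<m$, orthonormality of the $\varphi_k$ and~\eqref{eq:u-k-inequality} give
\[
  \|S_m(t)-S_N(t)\|_\LM^2=\sum_{k=N+1}^{m}|u_k(t)|^2\le\lambda_2^{-2\beta}\min\{t,T\}\sum_{k=N+1}^{m}\int_0^t\big|\bra f'(\tau),\varphi_k\ket_\LM\big|^2\dtau,
\]
and the right-hand side tends to $0$ as $N\to\infty$ by the Parseval bound above; hence $(S_N(t))_N$ is Cauchy and~\eqref{eq:u} converges in $\LM$. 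Passing to the limit in $\bra S_N(t),\varphi_k\ket_\LM=u_k(t)$ (valid once $N\ge k$) identifies the Fourier coefficients of $u(t)$ as the $u_k(t)$, and summing~\eqref{eq:u-k-inequality} over all $k$ yields $\sum_k\lambda_k^{2\beta}|u_k(t)|^2\le\min\{t,T\}\int_0^t\|f'(\tau)\|_\LM^2\dtau<\infty$, i.e.\ $u(t)\in\dom((-\Dg)^\beta)=H^{2\beta}(M)$.

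Next I would upgrade pointwise convergence to convergence uniform on compact time intervals, which yields the continuity and differentiability statements. Fix $R>0$; bounding $\min\{t,T\}\le\min\{R,T\}$ and $\int_0^t\le\int_0^R$ in the estimate above gives
\[
  \sup_{t\in[0,R]}\|S_m(t)-S_N(t)\|_\LM^2\le\lambda_2^{-2\beta}\min\{R,T\}\sum_{k>N}\int_0^R\big|\bra f'(\tau),\varphi_k\ket_\LM\big|^2\dtau\longrightarrow 0\quad(N\to\infty).
\]
Each $S_N$ is continuous, being a finite sum of continuous $\LM$-valued functions $u_k(\cdot)\varphi_k$ by Lemma~\ref{lemma:u-k-estimate}(i), so the uniform limit $u$ lies in $C([0,R];\LM)$; as $R$ is arbitrary, $u\in C([0,\infty);\LM)$. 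Replacing $u_k,f'$ by $u_k',f''$ and invoking Lemma~\ref{lemma:u-k-estimate}(iii), the same argument shows $\sum_k u_k'(t)\varphi_k$ converges uniformly on each $[0,R]$ to some $w\in C([0,\infty);\LM)$. Since $S_N\to u$ and $S_N'=\sum_{k\le N}u_k'(\cdot)\varphi_k\to w$ uniformly on $[0,R]$, the standard theorem on differentiating a uniformly convergent sequence of differentiable functions—which carries over verbatim to maps valued in a Banach space, see the Appendix—gives $u\in C^1([0,\infty);\LM)$ with $u'=w$, which is exactly~\eqref{eq:u-derivative}.

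Finally, $\supp u\subset(0,\infty)$ is immediate: by Lemma~\ref{lemma:u-k-estimate}(i) there is $\epsilon>0$ with $u_k(t)=0$ for every $k$ whenever $t\le\epsilon$, hence $u(t)=\sum_k u_k(t)\varphi_k=0$ for $t\le\epsilon$. The one delicate point, and the step I expect to require the most care, is the passage from pointwise to locally uniform convergence of the two series; it works precisely because the spectral gap $\lambda_2>0$ makes $\lambda_k^{-2\beta}$ bounded over $k\ge 2$, because Lemma~\ref{lemma:u-k-estimate}(i) provides a support bound independent of $k$, and because Parseval converts the sum over $k$ of the integrals in~\eqref{eq:u-k-inequality} into the finite quantities $\int_0^R\|f'\|_\LM^2$ and $\int_0^R\|f''\|_\LM^2$.
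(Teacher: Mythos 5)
Your proof is correct and follows essentially the same route as the paper: both rest on the estimates of Lemma~\ref{lemma:u-k-estimate}(ii)--(iii), Parseval's identity for $f'$ and $f''$, and locally uniform convergence of the term-wise differentiated series, the only cosmetic differences being that you bound $\lambda_k^{-2\beta}$ by $\lambda_2^{-2\beta}$ for $k\ge 2$ where the paper discards indices with $\lambda_N\ge 1$, and you treat the origin directly via uniform convergence on $[0,R]$ where the paper uses the support statement.
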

\begin{proof}
  Fix $t\ge 0$. Inequality~\eqref{eq:u-k-inequality} implies that
  \begin{equation}
    \label{eq:u-k-ineq-2}
    \sum_{k=1}^\infty \lambda_k^{2\beta}|u_k(t)|^2
    \le t \int_0^t \|f'(\tau)\|_\LM^2\dtau<\infty.
  \end{equation}
  Because $\lambda_k\to\infty$ as $k\to\infty$,
  inequality~\eqref{eq:u-k-ineq-2} implies that
  $\sum_{k=1}^\infty|u_k(t)|^2<\infty$, and therefore the
  series~\eqref{eq:u} converges in $\LM$. It also follows that
  $u(t)\in\dom((-\Dg)^\beta)$.

  The first part of Lemma~\ref{lemma:u-k-estimate} implies that
  $\supp u\subset(0,\infty)$. Therefore on a neighborhood of the
  origin $u$ is smooth and~\eqref{eq:u-derivative} holds.
  
  Because $u_k\in C^1([0,\infty))$, the partial sums of~\eqref{eq:u}
  satisfy $(\sum_{k=1}^N u_k\varphi_k)' = \sum_{k=1}^N
  u_k'\varphi_k$. Hence to prove $u\in C^1([0,\infty))$
  and~\eqref{eq:u-derivative}, it is enough to prove that the series
  on the right-hand side of~\eqref{eq:u-derivative} converges
  uniformly on every subinterval $(0,T)\subset(0,\infty)$.

  For $N$ large enough so that $\lambda_N\ge 1$, {(iii)} of
  Lemma~\ref{lemma:u-k-estimate} yields
  \begin{equation*}
    \sum_{k=N}^\infty |u'_k(t)|^2 \le T\int_0^T \sum_{k=N}^\infty |\bra f''(\tau),\varphi_k\ket_\LM|^2\dtau
    \qquad(0\le t<T).
  \end{equation*}
  The integrand converges to zero pointwise as $N\to\infty$, and it is
  dominated by the integrable function $\|f''\|_\LM^2$. By the
  Lebesgue's dominated convergence theorem, the integral tends to zero
  as $N\to\infty$. This implies uniform convergence of
  $\sum_{k=1}^\infty u_k'\varphi_k$ on $(0,T)$.
\end{proof}

\begin{lemma}
  \label{lemma:u-fractional-derivative}
  The Caputo derivative of order $\alpha\in(0,1]$ of the $\LM$-valued
  function $u$ defined by~\eqref{eq:u} and~\eqref{eq:u-k} exists on
  $[0,\infty)$, and
  \begin{equation}
    \label{eq:u-fractional-derivative}
    \D u(t)
    = \sum_{k=1}^\infty [\D u_k(t)]\varphi_k
    = \sum_{k=1}^\infty \big(-\lambda_k^\beta u_k(t) + \bra f(t),\varphi_k\ket_\LM\big) \varphi_k
    \qquad (t\ge 0).
  \end{equation}
\end{lemma}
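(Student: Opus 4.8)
The plan is to reduce the vector-valued identity to the already-established scalar facts about the coefficient functions $u_k$, using that $(\varphi_k)_{k=1}^\infty$ is a complete orthonormal system in $\LM$ and that pairing against a single $\varphi_k$ is a \emph{bounded} linear functional, hence commutes with Bochner integration. This way I never need to interchange the Bochner integral with the infinite series directly.

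First I would record the scalar identity. Since $f\in C^2_c((0,\infty);\LM)$, the function $\tau\mapsto\bra f(\tau),\varphi_k\ket_\LM$ lies in $C^1_c((0,\infty))$, so Proposition~\ref{prop:scalar-FDE} applies with $\lambda=\lambda_k^\beta$ and $b=\bra f(\cdot),\varphi_k\ket_\LM$: the function $u_k$ given by~\eqref{eq:u-k} is the unique $C^1([0,\infty))$ solution of $\D u_k+\lambda_k^\beta u_k=\bra f,\varphi_k\ket_\LM$ with $u_k(0)=0$. Hence $\D u_k(t)=-\lambda_k^\beta u_k(t)+\bra f(t),\varphi_k\ket_\LM$ for all $t\ge 0$, which is exactly the passage from the middle to the right-hand expression in~\eqref{eq:u-fractional-derivative}; $\ell^2$-summability of $(\D u_k(t))_k$ follows from~\eqref{eq:u-k-ineq-2} together with $\sum_k|\bra f(t),\varphi_k\ket_\LM|^2=\|f(t)\|_\LM^2<\infty$.

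Next I would treat the two ranges of $\alpha$. For $\alpha=1$ there is nothing left: $\D u=u'$ and $\D u_k=u_k'$, so the claimed formula is precisely~\eqref{eq:u-derivative} of Lemma~\ref{lemma:u-is-well-defined}. For $0<\alpha<1$, existence of $\D u(t)$ for every $t\ge 0$ is already guaranteed by $u\in C^1([0,\infty);\LM)$ (see the remark following~\eqref{eq:Caputo}: the Bochner integral of $(t-\tau)^{-\alpha}u'(\tau)$ over $[0,t]$ converges since $u'$ is bounded on $[0,t]$ and $(t-\tau)^{-\alpha}$ is integrable there). To identify it, fix $k$ and apply the bounded functional $\bra\,\cdot\,,\varphi_k\ket_\LM$; since such functionals commute with Bochner integrals (Proposition~\ref{prop:integral}) and $\bra u'(\tau),\varphi_k\ket_\LM=u_k'(\tau)$ because $u_k=\bra u,\varphi_k\ket_\LM$ and $u\in C^1$,
\[
  \bra\D u(t),\varphi_k\ket_\LM=\frac{1}{\Gamma(1-\alpha)}\int_0^t(t-\tau)^{-\alpha}u_k'(\tau)\dtau=\D u_k(t)\qquad(t\ge 0).
\]
Because $(\varphi_k)$ is complete and $\D u(t)\in\LM$, expanding in this orthonormal basis gives $\D u(t)=\sum_k\bra\D u(t),\varphi_k\ket_\LM\varphi_k=\sum_k[\D u_k(t)]\varphi_k$ with convergence in $\LM$ (and Parseval reconfirms the summability). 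Combining this with the scalar identity of the previous paragraph yields both equalities in~\eqref{eq:u-fractional-derivative}.

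I do not expect a genuine obstacle here. The only point demanding care is resisting the temptation to pull the infinite sum through the Bochner integral; instead one tests against each $\varphi_k$ separately — a single bounded functional — and reconstructs $\D u(t)$ from its Fourier coefficients. All the needed regularity ($u\in C^1([0,\infty);\LM)$, the convolution representation~\eqref{eq:u-k-convolution}, and the coefficient bounds) has already been set up in Lemmas~\ref{lemma:u-k-estimate} and~\ref{lemma:u-is-well-defined}, so no further convergence analysis is required.
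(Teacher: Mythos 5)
Your proposal is correct and follows essentially the same route as the paper: the existence of $\D u$ comes from $u\in C^1([0,\infty);\LM)$, the first equality in~\eqref{eq:u-fractional-derivative} is obtained by commuting the Caputo integral with the eigenfunction expansion of $u'$ (the paper invokes Proposition~\ref{prop:integral}(ii) directly, which is proved exactly by the pairing-against-$\varphi_k$ argument you spell out), and the second equality is reduced to Proposition~\ref{prop:scalar-FDE}. No gaps.
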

\begin{proof}
  By Lemma~\ref{lemma:u-is-well-defined} we have
  $u\in C^1([0,\infty);L^2(M))$, hence the Caputo derivative of order
  $\alpha$ of $u$ exists at every point $t\ge 0$.

  If $\alpha=1$, the first equality
  of~\eqref{eq:u-fractional-derivative} is true by
  Lemma~\ref{lemma:u-is-well-defined}. If $0<\alpha<1$, the first
  equality follows from an application of {(ii)} of
  Proposition~\ref{prop:integral} and~\eqref{eq:u-derivative} in the
  definition of $\D$.

  The second equality follows from Proposition~\ref{prop:scalar-FDE}.
\end{proof}

Proving the existence and uniqueness of a strong solution
of~\eqref{eq:FDE} is now straightforward:
\begin{proof}[Proof of
  Proposition~\ref{prop:existence-and-uniqueness}]
  Proposition~\ref{prop:FDE-uniqueness} implies that a strong
  solution, should it exist, is
  unique. Lemma~\ref{lemma:u-is-well-defined} proves that the function
  $u$ specified by~\eqref{eq:u} and~\eqref{eq:u-k} is a well-defined
  function with range in $\dom((-\Dg)^\beta)$ and $u(0)=0$. By
  Lemma~\ref{lemma:u-fractional-derivative} the Caputo derivative of
  order $\alpha$ of $u$ exists on $[0,\infty)$, and
  \begin{equation*}
    \D u(t) = -(-\Dg)^\beta u(t) + f(t)
    \qquad (t\ge 0).
  \end{equation*}
  Therefore a strong solution exists, and the solution is given
  by~\eqref{eq:u}.
\end{proof}

\subsection{The local source-to-solution operator
  \texorpdfstring{$\LSS$}{L\_V}}
\label{sec:LSS}

Given $f\in C^2_c((0,\infty);\LM)$, let $u^f\in C^1([0,\infty);\LM)$
denote the strong solution of the fractional diffusion
equation~\eqref{eq:FDE}.

\begin{proposition}
  \label{prop:uf-bounded}
  Let $T>0$. There exists a constant $C_{T,M,\alpha}>0$ (that depends on $T$, $\alpha$, and the manifold $(M,g)$) such that
  \begin{equation}
    \label{eq:u-f-inequality}
    \sup_{t\ge0}\|u^f(t)\|_\LM \le C_{T,M,\alpha} \left(\int_0^T \|f'(\tau)\|_\LM^2\dtau\right)^{1/2},
  \end{equation}
  for every $f\in C^2_c((0,T);\LM)$.
\end{proposition}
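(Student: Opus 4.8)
The plan is to read the solution off its eigenfunction expansion and estimate it mode by mode. By Proposition~\ref{prop:existence-and-uniqueness} we have $u^f(t)=\sum_{k=1}^\infty u_k(t)\varphi_k$, and since $(\varphi_k)$ is orthonormal, Parseval's identity gives $\|u^f(t)\|_\LM^2=\sum_{k=1}^\infty|u_k(t)|^2$. So it suffices to produce, for each $k\in\Zp$, a bound $|u_k(t)|^2\le C\int_0^T|\bra f'(\tau),\varphi_k\ket_\LM|^2\dtau$ with $C$ independent of $t$ and $k$, and then sum, using $\sum_{k}\int_0^T|\bra f'(\tau),\varphi_k\ket_\LM|^2\dtau=\int_0^T\|f'(\tau)\|_\LM^2\dtau$ by Parseval once more.

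For the modes $k\ge 2$ this is immediate from Lemma~\ref{lemma:u-k-estimate}: since $M$ is connected, $\lambda_k\ge\lambda_2>0$, and \eqref{eq:u-k-inequality} together with $\supp f'\subset(0,T)$ gives
\[
  |u_k(t)|^2\le\frac{\min\{t,T\}}{\lambda_k^{2\beta}}\int_0^t|\bra f'(\tau),\varphi_k\ket_\LM|^2\dtau\le\frac{T}{\lambda_2^{2\beta}}\int_0^T|\bra f'(\tau),\varphi_k\ket_\LM|^2\dtau,
\]
and $\lambda_2^{-2\beta}\le\max\{1,\lambda_2^{-2}\}$ for every $\beta\in(0,1]$, so the constant here depends only on $T$ and $(M,g)$.

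The one mode not covered by this argument, and the only real obstacle, is $k=1$: there $\lambda_1=0$, the smoothing factor $\lambda_k^{-\beta}$ disappears, and \eqref{eq:u-k-inequality} becomes vacuous. The remedy is to use instead the compact time support of $f$. From \eqref{eq:u-k} with $\lambda_1=0$ we get $u_1(t)=\tfrac1{\Gamma(\alpha)}\int_0^t(t-\tau)^{\alpha-1}\bra f(\tau),\varphi_1\ket_\LM\dtau$ (which is $\int_0^t\bra f(\tau),\varphi_1\ket_\LM\dtau$ when $\alpha=1$). Because $f$ is supported in $(0,T)$, one checks that $\int_0^t(t-\tau)^{\alpha-1}\mathbf{1}_{(0,T)}(\tau)\dtau=\tfrac1\alpha\big(t^\alpha-(t-\min\{t,T\})^\alpha\big)\le T^\alpha/\alpha$ for all $t\ge0$ (the map $t\mapsto t^\alpha-(t-T)^\alpha$ being nonincreasing on $[T,\infty)$ for $0<\alpha\le1$); combining this with $|\bra f(\tau),\varphi_1\ket_\LM|=\big|\int_0^\tau\bra f'(s),\varphi_1\ket_\LM\,ds\big|\le T^{1/2}\big(\int_0^T|\bra f'(s),\varphi_1\ket_\LM|^2\,ds\big)^{1/2}$ (using $f(0)=0$ and Cauchy--Schwarz) yields the uniform-in-$t$ bound $|u_1(t)|\le \tfrac{T^{\alpha+1/2}}{\Gamma(\alpha+1)}\big(\int_0^T|\bra f'(\tau),\varphi_1\ket_\LM|^2\dtau\big)^{1/2}$.

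Finally I would collect the constants: with $C_{T,M,\alpha}:=\big(\max\{T^{2\alpha+1}\Gamma(\alpha+1)^{-2},\,T\max\{1,\lambda_2^{-2}\}\}\big)^{1/2}$, summing the $k=1$ estimate and the $k\ge2$ estimates over $\Zp$ gives $\|u^f(t)\|_\LM^2\le C_{T,M,\alpha}^2\int_0^T\|f'(\tau)\|_\LM^2\dtau$ for every $t\ge0$, and taking the supremum over $t$ and a square root gives \eqref{eq:u-f-inequality}. As noted, essentially all the work is isolated in the $\lambda_1=0$ mode, where one trades the missing spectral decay for the time-localization of the source; the rest is routine bookkeeping with orthonormality.
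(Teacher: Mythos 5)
Your proof is correct and follows essentially the same route as the paper: expand in eigenfunctions, use Parseval, treat the zero mode $\lambda_1=0$ separately via the compact time support of $f$ (with the same $T^{2\alpha+1}/\Gamma(\alpha+1)^2$ constant) and the identity $\bra f(t),\varphi_1\ket_\LM=\int_0^t\bra f'(\tau),\varphi_1\ket_\LM\dtau$, and invoke part (ii) of Lemma~\ref{lemma:u-k-estimate} together with $\lambda_k\ge\lambda_2>0$ for the remaining modes. The only difference is cosmetic: you additionally absorb the $\beta$-dependence of $\lambda_2^{-2\beta}$ into a $\beta$-independent constant, which the paper does not bother to make explicit.
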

\begin{proof}
  We have the representation of $u^f$ given by~\eqref{eq:u}
  and~\eqref{eq:u-k}. Let us first estimate the first term $u_1$ of
  the representation.

  Since $\lambda_1=0$, for every $t\ge 0$ it holds that
  \begin{equation}
    \label{eq:u-1-estimate}
    |u_1(t)|
    \le\frac{1}{\Gamma(\alpha)}\sup_{\tau\ge0}|\bra f(\tau),\varphi_1\ket_\LM|
    \,\int_0^{\min\{t,T\}} (t-\tau)^{\alpha-1}\dtau.
  \end{equation}
  The inner product in~\eqref{eq:u-1-estimate} can be estimated
  with~\eqref{eq:f-ODE}. Applying the Cauchy--Schwarz inequality
  to~\eqref{eq:f-ODE} and noticing that the integral
  in~\eqref{eq:u-1-estimate} as a function of $t$ obtains its maximum
  at $t=T$ show that
  \begin{equation}
    \label{eq:u-1}
    |u_1(t)|^2
    \le\frac{1}{\Gamma(\alpha)^2}\int_0^T |\bra f'(\tau),\varphi_1\ket_\LM|^2\dtau\,
    \,\frac{T^{2\alpha+1}}{\alpha^2}.
  \end{equation}

  If $k>1$, then $\lambda_k\ge\lambda_2>0$. Therefore {(ii)}
  of Lemma~\ref{lemma:u-k-estimate} can be used to estimate
  $|u_k(t)|^2$. These estimates together with~\eqref{eq:u-1} readily
  yield~\eqref{eq:u-f-inequality}.
\end{proof}

Recall that in the inverse problem we consider sources supported on an
open set $V\subset M$, and observe the evolution of the corresponding
solutions of the fractional diffusion equation~\eqref{eq:FDE} on the
set $V$. From this information we want to recover the manifold
$(M,g)$.

In what follows, we identify $\LV$ as a subset of $\LM$ by identifying
functions with their zero extensions. Also, by an abuse of notation,
$u^f|_V$ denotes the function $[0,\infty) \ni t\mapsto
u^f(t)|_V$. Then Proposition~\ref{prop:existence-and-uniqueness} and
Proposition~\ref{prop:uf-bounded} imply that if
$f\in C^2_c((0,\infty);\LV$), then
$u^f|_V \in C^1([0,\infty);\LV)\cap L^\infty([0,\infty);\LV)$.

\begin{definition}
  Let $V\subset M$ be a nonempty open set with smooth boundary. The
  \emph{local source-to-solution operator} on $V$, denoted by $\LSS$,
  is the operator
  \begin{equation*}
    \LSS : C_c^2((0,\infty);\LV) \to C^1([0,\infty);\LV)\cap L^\infty([0,\infty);\LV)
  \end{equation*}
  defined by
  \begin{equation*}
    \LSS f := u^f|_V.
  \end{equation*}
  For $T>0$, the \emph{truncated local source-to-solution operator} on
  $V$, denoted by $\LSST$, is the operator
  \begin{equation*}
    \LSST : C_c^2((0,\infty);\LV) \to C^1([0,T);\LV)\cap L^\infty([0,T);\LV)
  \end{equation*}
  defined by $\LSST f := (\LSS f)|_{[0,T)}$.
\end{definition}

Note that as a topological vector space, the space $\LV$ is
independent of the Riemannian metric $g|_V$. Therefore the domain and
codomain of $\LSS$ and $\LSST$ do not depend on $g$.

If $(\varphi_k)_{k=1}^\infty$ and $(\lambda_k)_{k=1}^\infty$ are as in
Section~\ref{sec:FDE-solution}, the local source-to-solution operator
can be represented as
\begin{equation}
  \label{eq:LSS}
  \LSS f(t)
  = \sum_{k=1}^\infty \left[ \int_0^t (t-\tau)^{\alpha-1}
    E_{\alpha,\alpha}(-\lambda_k^\beta(t-\tau)^\alpha)
    \bra
    f(\tau),\varphi_k\ket_\LV\dtau\right]\varphi_k|_V,
\end{equation}
where the sum converges in $\LV$, for every $t\ge 0$.

As a consequence of Proposition~\ref{prop:uf-bounded}, we obtain the
following continuity result for the local source-to-solution operator:
\begin{proposition}
  \label{prop:L-continuity}
  Let $T>0$. Suppose that
  \begin{equation*}
    f\in C_c^2((0,T);\LV)
    \text{ and }
    (f_k)_{k=1}^\infty\subset C_c^2((0,T);\LV)
  \end{equation*}
  are such that $f_k'(t)\to f'(t)$ in $\LV$ as $k\to\infty$, for every
  $t\in(0,T)$, and
  \begin{equation*}
    \sup_{\substack{k\in\Zp,\\t\in(0,T)}} \|f_k'(t)\|_\LV<\infty.
  \end{equation*}
  Then
  \begin{equation*}
    \LSS f_k(t)\to\LSS f(t)\text{ in }\LV\text{ as } k\to\infty,
  \end{equation*}
  uniformly in $t\ge 0$.
\end{proposition}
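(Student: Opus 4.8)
The plan is to reduce the claim to the estimate already available in Proposition~\ref{prop:uf-bounded}, applied to the differences $f_k - f$. First I would observe that $\LSSop$ is linear, so $\LSS f_k(t) - \LSS f(t) = \LSS(f_k - f)(t) = u^{f_k - f}(t)|_V$. Since each $f_k - f$ lies in $C^2_c((0,T);\LV) \subset C^2_c((0,T);\LM)$, Proposition~\ref{prop:uf-bounded} gives
\begin{equation*}
  \sup_{t \ge 0} \|\LSS f_k(t) - \LSS f(t)\|_\LV
  \le \sup_{t\ge 0}\|u^{f_k-f}(t)\|_\LM
  \le C_{T,M,\alpha}\left(\int_0^T \|f_k'(\tau) - f'(\tau)\|_\LM^2\dtau\right)^{1/2}.
\end{equation*}
So it suffices to show that the right-hand side tends to $0$ as $k\to\infty$, and the uniformity in $t$ is then automatic since the bound does not depend on $t$.

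The remaining step is to verify that $\int_0^T \|f_k'(\tau) - f'(\tau)\|_\LV^2\,d\tau \to 0$, and this is exactly a dominated convergence argument. By hypothesis $f_k'(\tau) \to f'(\tau)$ in $\LV$ for every $\tau \in (0,T)$, so the integrand $\|f_k'(\tau) - f'(\tau)\|_\LV^2$ converges to $0$ pointwise in $\tau$. For the dominating function, let $C := \sup_{k,\,\tau}\|f_k'(\tau)\|_\LV < \infty$ (finite by assumption). Pointwise convergence of $f_k'(\tau)$ to $f'(\tau)$ together with this uniform bound gives $\|f'(\tau)\|_\LV \le C$ for a.e.\ $\tau$, hence $\|f_k'(\tau) - f'(\tau)\|_\LV^2 \le (2C)^2 = 4C^2$ for all $k$ and $\tau \in (0,T)$. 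The constant function $4C^2$ is integrable on the finite interval $(0,T)$, so Lebesgue's dominated convergence theorem applies and the integral tends to $0$.

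Combining the two displays, $\sup_{t\ge 0}\|\LSS f_k(t) - \LSS f(t)\|_\LV \to 0$, which is the assertion. I do not expect any genuine obstacle here: the proof is a direct consequence of the quantitative bound of Proposition~\ref{prop:uf-bounded} (which already did the real work of controlling the first eigenmode separately and the higher modes via Lemma~\ref{lemma:u-k-estimate}) plus a routine dominated convergence step. The only small care needed is to identify $\LV$-valued functions with their zero extensions to $\LM$ so that Proposition~\ref{prop:uf-bounded} is literally applicable, and to note that the $\LM$-norm of a zero-extended $\LV$ function equals its $\LV$-norm.
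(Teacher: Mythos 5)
Your argument is correct and is essentially identical to the paper's proof: linearity plus the bound of Proposition~\ref{prop:uf-bounded} applied to $f_k-f$, followed by dominated convergence using the assumed uniform bound on $\|f_k'(t)\|_\LV$. The extra care you take in exhibiting the explicit dominating constant $4C^2$ and in noting the zero-extension identification is consistent with (and slightly more detailed than) what the paper does.
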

\begin{proof}
  From the definition of the local source-to-solution operator and
  inequality~\eqref{eq:u-f-inequality} of
  Proposition~\ref{prop:uf-bounded}, it follows that for every
  $t\ge 0$ 
  \begin{equation}
    \label{eq:LSS-inequality}
    \begin{split}
      \|\LSS f(t)-\LSS f_k(t)\|_\LV
      &\le\|u^{f-f_k}(t)\|_\LM\\
      &\le
      C_{T,M,\alpha}\left(\int_0^T\|(f-f_k)'(\tau)\|_\LM^2\dtau\right)^{1/2}.
    \end{split}
  \end{equation}
  By assumption $f_k'(\tau)\to f'(\tau)$ in $\LV$ as $k\to\infty$, for
  every $\tau\in(0,T)$, and the same holds for their zero extensions
  in $\LM$. 
  
  Because the integrand is uniformly bounded with respect to
  $\tau\in(0,T)$ and $k\in\Zp$, the Lebesgue's dominated convergence
  theorem can be applied. This concludes the proof.
\end{proof}

\section{Analysis of the inverse problem}
\label{sec:inverse-problem}

We begin by showing that the local source-to-solution operator can be
determined with a single measurement, provided the source is chosen
appropriately. After that we show that the manifold is determined up
to a Riemannian isometry by this operator.

\subsection{The local source-to-solution operator
  \texorpdfstring{$\LSS$}{L\_V} can be determined with one
  measurement}
\label{sec:LSS-uniqueness}

We construct a source $h\in C_c^\infty((0,T);\LV)$ such that the local
source-to-solution operator $\LSS$ is completely determined by the
single function $\LSST h$.

\begin{definition}
  \label{def:h}
  Fix a constant $T>0$ and let $V\subset M$ be a nonempty open set
  with smooth boundary. Choose a number $0<S<T$, a nonzero
  non-negative function $n\in C_c^\infty(-1,1)$, a bounded sequence
  $(\psi_k)_{k=1}^\infty\subset\LV$ of functions that spans a dense
  subspace of $\LV$, and a sequence $(r(k))_{k=1}^\infty \subset\Zp$
  which contains every positive integer infinitely many times. Then
  define the source $h$ by 
  \begin{equation}
    \label{eq:h}
    h(t) := \sum_{k=1}^\infty%
    \underbrace{ \frac{S^k }%
      {2^{k(k+2)}m_k}\cdot n\left(2^{k+1}\left(\frac{t}{S}-1\right)+3\right)%
    }_{\phantom{MMM}=:h_k(t)}%
    \psi_{r(k)}
    \qquad(t\in\R),
  \end{equation}
  where $m_k:=\max_{0\le l\le k} \|n^{(l)}\|_\infty$.
\end{definition}
We provide illustration of the decay of functions $h_k$ as $k$ increases in Figure~\ref{fig:hk}. 
\begin{figure}\label{fig:hk}
\begin{picture}(250,130)
\put(10,0){\includegraphics[width=0.95\textwidth]{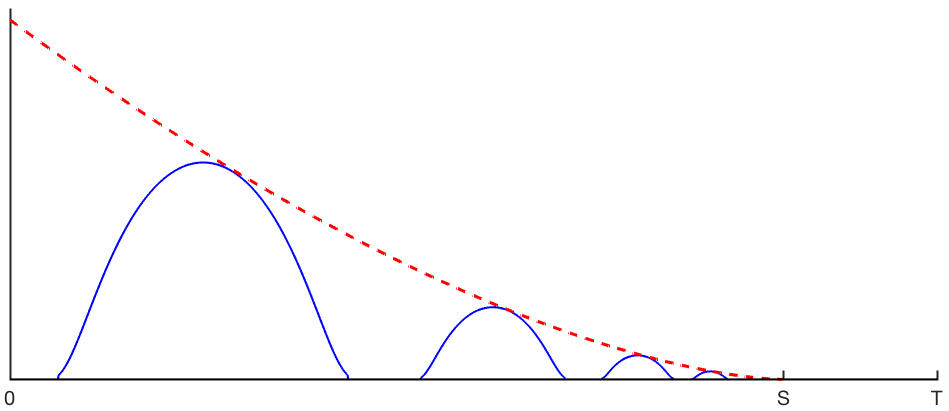}}
\put(75,0){$h_1(t)$}
\put(180,0){$h_2(t)$}
\put(230,0){$h_3(t)$}
\put(257,0){$h_4(t)$}
\end{picture}
\caption{Examples of functions $h_k$, $k\geq 1$, are plotted in blue with the choice of $n$ being a Gaussian bump. The exponential decay of the amplitude of $h_k$ as $k$ increases is illustrated by the red dashed line. The values of $h_k$ are not presented in scale.}
\end{figure}

\begin{remark}
  Boundedness and denseness in $\LV$ are qualities that are
  independent of the metric $g|_V$, because all Riemannian metrics on
  $M$ induce the same topology on $\LV$.
\end{remark}
\begin{remark}
  An example of a sequence that contains every positive integer
  infinitely many times is the sequence that begins
  \begin{equation*}
    1, 1, 2, 1, 2, 3, 1, 2, 3, 4, \ldots
  \end{equation*}
\end{remark}

In order to prove Proposition~\ref{prop:LSS-uniqueness}, consider a
manifold $(\widetilde{M},\widetilde{g})$ and an open set
$\widetilde{V}\subset\widetilde{M}$ with smooth boundary, and suppose they
satisfy the conditions of Theorem~\ref{thm:main-theorem}. Let
$\theta:\cl(\widetilde{V})\to\cl(V)$ be a diffeomorphism. Because of
the compactness of $\cl(\widetilde{V})$ and $\cl(V)$, the pullback
$\theta^*$ is a continuous operator from $L^2(\cl(V))$ onto
$L^2(\cl(\widetilde{V}))$. By the diffeomorphism invariance of the
boundary, $\theta^*$ is also a continuous operator from $\LV$ onto
$L^2(\widetilde{V})$. Therefore, if $h\in C^\infty_c((0,T);\LV)$, then
$\theta^*h\in C^\infty_c((0,\infty);L^2(\widetilde{V}))$.

It is convenient to introduce the conjugated operators $\LSSTilde$ and
$\LSSTTilde$ defined for $f\in C^2_c((0,\infty);\LV)$ by
\begin{equation*}
  \LSSTilde f := (\theta^*)\inv\LSSop_{\widetilde{V}}(\theta^* f)
  \text{ and }
  \LSSTTilde f := (\theta^*)\inv\LSSop_{\widetilde{V},T}(\theta^* f).
\end{equation*}
Then
\begin{equation*}
  \LSSTilde :  C_c^2((0,\infty);\LV) \to C^1([0,\infty);\LV)\cap L^\infty([0,\infty);\LV)
\end{equation*}
and
\begin{equation*}
  \LSSTTilde : C_c^2((0,\infty);\LV) \to C^1([0,T);\LV)\cap L^\infty([0,T);\LV),
\end{equation*}
and~\eqref{eq:LSSh-equality} and~\eqref{eq:LSS-equality} are
equivalent to $\LSST h = \LSSTTilde h$ and $\LSS = \LSSTilde$,
respectively.

Proposition~\ref{prop:LSS-uniqueness} will be proved in several
steps. Let us first prove the compactness and smoothness of $h$.

\begin{proposition}
  The terms $h_k\in C_c^\infty((0,\infty))$ in the series~\eqref{eq:h}
  satisfy
  \begin{equation}
    \label{eq:supp-of-h_k}
    \supp h_k \subset \big( (1-2^{1-k})S, (1-2^{-k})S \big)
    \qquad(k\in\Zp).
  \end{equation}
  In particular, their supports are pairwise disjoint. Furthermore,
  the series converges uniformly in $t\in\R$, and defines a function
  $h\in C_c^\infty((0,T);\LV)$.
\end{proposition}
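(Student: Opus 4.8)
The plan is to verify the three claimed properties of the series \eqref{eq:h} in turn: the support localization of each $h_k$, the pairwise disjointness of the supports, and the uniform convergence to a function in $C_c^\infty((0,T);\LV)$.

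First I would establish \eqref{eq:supp-of-h_k}. Since $n\in C_c^\infty(-1,1)$, we have $\supp n\subset[-1,1]$, so $h_k(t)\ne 0$ only when $-1\le 2^{k+1}(t/S-1)+3\le 1$, i.e.\ $-4\le 2^{k+1}(t/S-1)\le -2$, which rearranges to $1-2^{1-k}\le t/S\le 1-2^{-k}$, giving the stated inclusion (with the endpoints not attained because $\supp n$ is compactly contained in $(-1,1)$, hence actually contained in some $[-1+\delta,1-\delta]$; either way the open interval in \eqref{eq:supp-of-h_k} contains the support). Disjointness is then immediate: the intervals $((1-2^{1-k})S,(1-2^{-k})S)$ for consecutive $k$ share only the endpoint $(1-2^{-k})S$, which lies in neither open interval, so the closed supports of the $h_k$ are disjoint. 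Moreover every such interval is contained in $(0,S)\subset(0,T)$ since $0<1-2^{1-k}$ for $k\ge 2$ and for $k=1$ the support is inside $(0,S/2)$; in all cases $h_k$ vanishes near $0$ and near $T$.

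Next I would prove uniform convergence of the series in $t$, and more: uniform convergence of every termwise derivative. The key is the explicit estimate built into the definition of the normalizing constant $m_k=\max_{0\le l\le k}\|n^{(l)}\|_\infty$. Differentiating $h_k$ in $t$ brings down factors of $2^{k+1}/S$ per derivative, so for $0\le l\le k$,
\begin{equation*}
  \|h_k^{(l)}\|_\infty
  \le \frac{S^k}{2^{k(k+2)}m_k}\cdot\Big(\frac{2^{k+1}}{S}\Big)^l\|n^{(l)}\|_\infty
  \le \frac{S^{k-l}\,2^{(k+1)l}}{2^{k(k+2)}}.
\end{equation*}
For fixed $l$ and all $k\ge l$, the exponent of $2$ in the denominator, $k(k+2)$, grows quadratically while the numerator's exponent $(k+1)l$ grows only linearly in $k$; hence $\|h_k^{(l)}\|_\infty$ is bounded by a summable sequence (one can simply say it is $O(2^{-k})$ for each fixed $l$, absorbing the polynomial and the $S$-powers). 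Since $(\psi_k)$ is a bounded sequence in $\LV$, say $\|\psi_k\|_{\LV}\le C_\psi$, the series $\sum_k h_k^{(l)}(t)\psi_{r(k)}$ converges absolutely and uniformly in $t$ in $\LV$ for each $l$. By the standard theorem on termwise differentiation of uniformly convergent series (applied in the Banach space $\LV$), $h$ is $C^\infty$ as an $\LV$-valued function with $h^{(l)}=\sum_k h_k^{(l)}\psi_{r(k)}$, and by the disjoint-support analysis $\supp h\subset\cl(\bigcup_k\supp h_k)\subset[0,S]\subset[0,T)$ with $h$ vanishing in a neighborhood of $0$ (since $h_1$ does and all supports accumulate only at $S$, not at $0$) — hence $h\in C_c^\infty((0,T);\LV)$.

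The only real subtlety — the ``main obstacle'' — is bookkeeping with the factor $S^k$: when $S\ge 1$ the $S$-powers in the numerator grow, so one must be slightly careful to see the $2^{-k(k+2)}$ decay still dominates. This is handled by noting that for any fixed $S$ and fixed $l$, $S^{k-l}2^{(k+1)l}/2^{k(k+2)}\le C_{S,l}\,2^{-k}$ for $k$ large, because $k\log_2 S + (k+1)l - k(k+2)\to-\infty$; the finitely many small-$k$ terms are harmless. Everything else is routine: the support computation is elementary algebra, and the differentiation-under-the-sum is the standard Weierstrass $M$-test argument, valid verbatim for functions valued in the Banach space $\LV$.
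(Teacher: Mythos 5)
Your proof is correct and follows essentially the same route as the paper: the same elementary support computation, the same derivative bound $\|h_k^{(l)}\|_\infty\le S^{k-l}2^{(k+1)l}/2^{k(k+2)}$ exploiting the normalization by $m_k$, and the same Weierstrass $M$-test/termwise-differentiation argument in the Banach space $\LV$ (the paper packages this last step as a separate proposition in its appendix). The only cosmetic difference is that the paper imposes $2^{k+1}\ge S$ to get the clean bound $\|h_k^{(l)}\|_\infty\le 2^{-k}$ directly, whereas you absorb the $S$-powers asymptotically; both are fine.
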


\begin{proof}
  Inclusion~\eqref{eq:supp-of-h_k}, which is seen to hold by a
  straightforward calculation, implies that $h$ is defined pointwise
  and supported in $(0,S]\subset(0,T)$. For $l=0,1,2,\ldots$, the
  $l$-th derivative $h_k^{(l)}$ with $k$ large enough so that $k\ge l$
  and $2^{k+1}\ge S$ can be estimated as
  \begin{equation}
    \label{eq:h-derivative-estimate}
    |h_k^{(l)}(t)| =
    \frac{2^{(k+1)l}}{S^l}\frac{S^k|n^{(l)}(2^{k+1} t/S-2^{k+1}+3)|}%
    {2^{k(k+2)}\max_{0\le l\le k} \|n^{(l)}\|_\infty}%
    \le \frac{1}{2^k} \qquad(t\in\R).
  \end{equation}
  Therefore $\sum_{k=1}^\infty \|h_k^{(l)}\|_\infty < \infty$, for
  every $l=0,1,2,\ldots$ Estimate~\eqref{eq:h-derivative-estimate}
  together with Proposition~\ref{prop:general-prop} stated in the
  Appendix implies the remaining claims.
\end{proof}

\begin{lemma}
  \label{lemma:L-time-invariance}
  If $f\in C_c^2((0,\infty);\LV)$ and $t_0\in\R$ is a such that
  $f_{t_0} := f(\cdot-t_0)\in C_c^2((0,\infty);\LV)$, then
  \begin{equation*}
    \LSS f_{t_0}(t) =
    \begin{cases}
      0, & 0\le t<t_0,\\
      \LSS f(t-t_0), & t\ge\max\{0,t_0\}.
    \end{cases}
  \end{equation*}
\end{lemma}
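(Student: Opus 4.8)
The plan is to reduce the claimed identity to a termwise statement about the scalar component functions $u_k$ appearing in the representation~\eqref{eq:u}, and then transfer it back to $\LV$ using the $\LM$-convergence of the eigenfunction expansion from Proposition~\ref{prop:existence-and-uniqueness}. Throughout, $(\lambda_k)$ and $(\varphi_k)$ are the eigenvalues and eigenfunctions of $-\Dg$ as in Section~\ref{sec:FDE-solution}. I would first note that the hypothesis $f_{t_0}\in C_c^2((0,\infty);\LV)$ is precisely what guarantees that $\LSS f_{t_0}$ is defined at all, and that it forces $\supp f + t_0\subset(0,\infty)$.

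Next I would fix $k\in\Zp$, set $g := \bra f(\cdot),\varphi_k\ket_\LM$ (extended by zero from $(0,\infty)$ to all of $\R$), and write $g_{t_0} := g(\cdot - t_0) = \bra f_{t_0}(\cdot),\varphi_k\ket_\LM$. By~\eqref{eq:u-k-convolution} the $k$-th component $u_k^f$ of $u^f$ equals the convolution $F_k * g$, where $F_k$ is given by~\eqref{eq:F_k} and vanishes on $(-\infty,0]$; since the integrand $F_k(t-\tau)g(\tau)$ is supported in $\{0<\tau<t\}$, the function $u_k^f$, extended by zero to $(-\infty,0]$, coincides with $F_k*g$ on all of $\R$. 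A substitution $\sigma = \tau - t_0$ in the convolution integral then gives
\begin{equation*}
  u_k^{f_{t_0}}(t) = (F_k * g_{t_0})(t) = (F_k*g)(t-t_0) = u_k^f(t-t_0)
  \qquad(t\in\R),
\end{equation*}
with the convention that $u_k^f$ is extended by zero to negative arguments.

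It remains to unpack the two cases. If $0\le t<t_0$, then necessarily $t_0>0$ and $t-t_0<0$, so $u_k^{f_{t_0}}(t)=0$ for every $k\in\Zp$; summing the series~\eqref{eq:u} and restricting to $V$ gives $\LSS f_{t_0}(t)=0$. If instead $t\ge\max\{0,t_0\}$, then $t-t_0\ge 0$, so $u_k^{f_{t_0}}(t)=u_k^f(t-t_0)$ for every $k$; since both series $\sum_k u_k^{f_{t_0}}(t)\varphi_k$ and $\sum_k u_k^f(t-t_0)\varphi_k$ converge in $\LM$ by Proposition~\ref{prop:existence-and-uniqueness} and have identical terms, restricting to $V$ yields $\LSS f_{t_0}(t)=\LSS f(t-t_0)$. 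I do not expect a genuine obstacle here: the only points demanding care are the identification of the a priori $[0,\infty)$-defined function $u_k^f$ with the full-line convolution $F_k*g$ after zero extension (which rests only on the supports of $F_k$ and $g$), and the bookkeeping of the sign of $t_0$ so that the threshold $\max\{0,t_0\}$ in the statement emerges correctly.
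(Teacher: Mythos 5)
Your proposal is correct and follows essentially the same route as the paper, whose entire proof is the remark that the claim ``follows from a straightforward change of variables in~\eqref{eq:LSS}''; you simply carry out that change of variables componentwise via the convolution representation~\eqref{eq:u-k-convolution} and then resum the eigenfunction series. The details you supply (zero-extension of $u_k^f$, the support bookkeeping giving the threshold $\max\{0,t_0\}$) are exactly the ones the paper leaves implicit.
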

\begin{proof}
  This follows from a straightforward change of variables
  in~\eqref{eq:LSS}.
\end{proof}
  
Following proposition states the essential fact that if the support of
the source $f$ is included in the time interval $(0,T')$, then the
future evolution of $u^f|_V$ is determined completely by its evolution
up to time $T'$.
\begin{proposition}
  \label{prop:L-holomorphic}
  Let $T'>0$ and consider $f\in C^2_c((0,T'); \LV)$. If
  $\LSSop_{V,T'} f = \widetilde{\LSSop}_{\widetilde{V},T'}f$, then
  $\LSS f = \LSSTilde f$.
\end{proposition}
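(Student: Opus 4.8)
The plan is to exploit analyticity in time. For a source supported in $(0,T')$, the solution $u^f$ of the fractional diffusion equation solves the homogeneous equation $\D u^f + (-\Dg)^\beta u^f = 0$ for $t \ge T'$, and from the representation~\eqref{eq:LSS} each component $u_k(t)$ is, for $t \ge T'$, given by an integral of $(t-\tau)^{\alpha-1}E_{\alpha,\alpha}(-\lambda_k^\beta(t-\tau)^\alpha)$ against the fixed compactly supported source. The key observation is that, because $E_{\alpha,\alpha}(-\lambda_k^\beta z^\alpha)$ extends holomorphically to the right half-plane $\C_+$ (as recorded around~\eqref{eq:F-lambda}), each $t \mapsto u_k(t)$ admits a holomorphic extension to a complex neighborhood of $(T',\infty)$, and hence so does the $\LV$-valued map $t \mapsto \LSS f(t)$ on $(T',\infty)$. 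The same is true for $\LSSTilde f$. Since $\LSS f$ and $\LSSTilde f$ agree on $[0,T')$ (actually on $[0,T'')$ for some $T'' < T'$ by support considerations, but certainly on a nonempty open interval contained in both intervals of analyticity), they must agree everywhere by the identity theorem for vector-valued holomorphic functions.

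The steps I would carry out, in order, are: First, observe that for $t > T'$ the integral in~\eqref{eq:u-k} is over the fixed interval $\operatorname{supp}(f) \subset (0,T')$, so $u_k(t) = \int_0^{T'} (t-\tau)^{\alpha-1} E_{\alpha,\alpha}(-\lambda_k^\beta(t-\tau)^\alpha)\,\bra f(\tau),\varphi_k\ket_\LM \dtau$, and for $t$ in a complex neighborhood of $(T',\infty)$ each factor $(t-\tau)^{\alpha-1}E_{\alpha,\alpha}(-\lambda_k^\beta(t-\tau)^\alpha)$ is holomorphic in $t$ (since $t - \tau \in \C_+$ when $\re t > T'$); differentiation under the integral sign (justified by the uniform bounds $|E_{\alpha,\alpha}(z)| \le C_\alpha$ on $\C \setminus \C_+$ from Proposition~\ref{prop:ML-properties}(i), together with compactness of $\operatorname{supp} f$) shows $u_k$ extends holomorphically. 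Second, upgrade this to the $\LV$-valued function $t \mapsto \LSS f(t) = \sum_k u_k(t)\varphi_k|_V$: using the estimate~\eqref{eq:u-k-inequality} (applied with the complex time variable, where $\min\{t,T\}$ is replaced by a bound on the length of the integration path, bounded uniformly on compact subsets of the complex neighborhood) one gets local uniform convergence of the series in $\LV$, so the sum is an $\LV$-valued holomorphic function on a complex neighborhood of $(T',\infty)$. Third, note that $\LSS f$ is real-analytic (in fact holomorphic) on $(T',\infty)$ and continuous on $[0,\infty)$, and that $\LSS f = \LSSTilde f$ on $[0,T')$ by hypothesis; in particular they coincide on the overlap of $(T',\infty)$ — more carefully, since $\operatorname{supp} f \subset (0,T' - \delta]$ for some $\delta > 0$, both functions are holomorphic on a complex neighborhood of $(T'-\delta,\infty)$ and agree on $(T'-\delta, T')$, a nonempty interval. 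By the identity theorem for holomorphic Banach-space-valued functions on a connected open set, $\LSS f = \LSSTilde f$ on all of $(T'-\delta,\infty)$, and combined with agreement on $[0,T'-\delta]$ this gives $\LSS f = \LSSTilde f$ on $[0,\infty)$.

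The main obstacle is the second step: justifying that the series $\sum_k u_k(t)\varphi_k$ converges locally uniformly in $\LV$ when $t$ ranges over a complex neighborhood of $(T',\infty)$, and that the limit is genuinely holomorphic (not merely separately holomorphic in each component). This requires a complex analogue of the estimate~\eqref{eq:u-k-inequality}: one must bound $\lambda_k^{2\beta}|u_k(t)|^2$ uniformly for $t$ in a fixed compact subset $K$ of the complex neighborhood. For such $t$ one can deform the path of integration, or simply estimate directly: $|u_k(t)| \le \int_0^{T'} |t-\tau|^{\alpha-1} |E_{\alpha,\alpha}(-\lambda_k^\beta (t-\tau)^\alpha)| \, |\bra f(\tau),\varphi_k\ket| \dtau$, and the Mittag--Leffler factor is bounded by $C_\alpha$ when $\arg((t-\tau)^\alpha) \notin (-\pi/2,\pi/2)$, i.e. when $\lambda_k^\beta(t-\tau)^\alpha \in \C\setminus\C_+$; for $t$ near the positive real axis and $\lambda_k$ large this holds, while for the finitely many small eigenvalues one has trivial bounds. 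Thus on $K$ one obtains $\sum_k \lambda_k^{2\beta}|u_k(t)|^2 \le C_K \int_0^{T'}\|f(\tau)\|_\LM^2\dtau$, hence $\sum_k |u_k(t)|^2 \le C_K'$, and a routine Cauchy-estimate / Morera argument (or Weierstrass's theorem for vector-valued holomorphic functions, applied to partial sums) yields holomorphy of the limit. Once this is in hand, the identity theorem closes the argument; alternatively one may avoid the complex-analytic machinery entirely by noting that $\LSS f$ restricted to $(T',\infty)$ satisfies, componentwise, the linear fractional ODE $\D u_k + \lambda_k^\beta u_k = 0$ with "initial data at $T'$" determined by $u_k|_{[0,T']}$, and invoking uniqueness (Proposition~\ref{prop:scalar-FDE}, suitably translated) — but the holomorphic-continuation argument is cleaner since it handles the Caputo nonlocality without needing a shifted well-posedness statement.
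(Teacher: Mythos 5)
Your proposal is correct and follows essentially the same route as the paper: extend $t\mapsto \LSS f(t)$ holomorphically to a right half-plane $\{\re z>T'-\delta\}$ using the boundedness of $E_{\alpha,\alpha}$ on $\C\setminus\C_+$ and the lower bound $\re(z-\tau)\ge\delta$ on the support of $f$, prove uniform convergence of the eigenfunction series there (Cauchy--Schwarz plus Morera/Weierstrass), and conclude by the identity theorem from agreement on $(T'-\delta,T')$. Two cosmetic slips do not affect the argument: the condition for the Mittag--Leffler bound should read $-\lambda_k^\beta(t-\tau)^\alpha\in\C\setminus\C_+$, i.e.\ $\arg((t-\tau)^\alpha)\in[-\pi/2,\pi/2]$ (which holds for $\re(t-\tau)>0$ since $\alpha\le1$), and the direct estimate yields only the unweighted bound $\sum_k|u_k(t)|^2\lesssim\int_0^{T'}\|f(\tau)\|_\LM^2\dtau$ rather than the $\lambda_k^{2\beta}$-weighted one, which is all that is needed.
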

\begin{proof}
  The proposition will be proved by extending $\LSS f$
  holomorphically onto a region of the complex plane and applying the
  uniqueness of holomorphic continuation. For properties of
  vector-valued holomorphic functions, we refer to~\cite{MR1157815}.

  Fix $\epsilon>0$ small enough so that
  $\supp f\subset(0,T'-2\epsilon)$. We show that the $\LV$-valued
  mapping
  \begin{equation*}
    \R\supset (T'-\epsilon,\infty)\ni t\mapsto \LSS f(t) \in\LV
  \end{equation*}
  extends holomorphically onto the complex region
  $\{z\in\C:\re(z)>T'-\epsilon\}$. This is enough to prove the
  claim. Namely, in this case also $\LSSTilde f$ extends
  holomorphically onto the region, and by assumption the extensions
  agree on $(T'-\epsilon,T')$. By uniqueness of holomorphic extension,
  they agree everywhere on the region, so in particular also on
  $(T'-\epsilon,\infty)$.

  Consider complex-valued functions $g_k$ on $\C_+\times[0,\infty)$,
  where $k\in\Zp$, defined by
  \begin{equation*}
    g_k(z,\tau):= \bra f(\tau),\varphi_k\ket_\LM
    (z+T'-\epsilon-\tau)^{\alpha-1} E_{\alpha,\alpha}(-\lambda_k^\beta(z+T'-\epsilon-\tau)^\alpha).
  \end{equation*}
  By assumption $f(\tau)=0$ if $\tau\ge T'-2\epsilon$, therefore the
  functions are well-defined. In addition, an inspection shows that
  \begin{equation}
    \label{eq:L-extension}
    \LSS f(z+T'-\epsilon)
    = \sum_{k=1}^\infty \int_0^{T'} g_k(z,\tau)\dtau\,\varphi_k|_V
    \qquad (z\in(0,\infty)).
  \end{equation}
  We show that the right-hand side of~\eqref{eq:L-extension} is an
  $\LV$-valued holomorphic function on $\C_+$.

  By {(i)} of Proposition~\ref{prop:ML-properties},
  $E_{\alpha,\alpha}$ is bounded on $\C\setminus\C_+$. It follows that
  with a constant $C = C(\alpha,\epsilon)>0$ we have
  \begin{equation}
    \label{eq:gk-estimate}
    |g_k(z,\tau)|\le
    C |\bra f(\tau),\varphi_k\ket_\LM|\qquad(z\in\C_+,\,\tau\ge 0,\,k\ge 1).
  \end{equation}
  Consequently
  \begin{equation}
    \label{eq:D-k}
    \left|\int_0^{T'} g_k(z,\tau)\dtau\right|^2
    \le C^2T'\int_0^{T'} |\bra f(\tau),\varphi_k\ket_\LM|^2\dtau
    \qquad(z\in\C_+,\, k\ge 1).
  \end{equation}

  Let $D_k\ge 0$ denote the right-hand side of~\eqref{eq:D-k}. Then
  for all $N>M>0$ and $z\in\C_+$ it holds that
  \begin{equation*}
    \Bigg\|\sum_{k=1}^N \int_0^{T'} g_k(z,\tau)\dtau\,\varphi_k -
    \sum_{k=1}^M \int_0^{T'} g_k(z,\tau)\dtau\,\varphi_k\Bigg\|^2_\LM
    \le\sum_{k=M+1}^N D_k,
  \end{equation*}
  and
  \begin{equation*}
    \sum_{k=1}^\infty D_k
    = C^2T'\int_0^{T'} \|f(\tau)\|^2_\LM\dtau
    <\infty.
  \end{equation*}
  It follows from the Cauchy criterion for uniform convergence that
  for every $z\in\C_+$ the series
  \begin{equation}
    \label{eq:int-gk}
    \sum_{k=1}^\infty \int_0^{T'} g_k(z,\tau)\dtau\,\varphi_k 
  \end{equation}
  converges in the topology of $\LM$, and that the convergence is
  uniform in $z\in\C_+$.

  Using the fact that for every $\tau\ge 0$ the function
  $g_k(\cdot,\tau)$ is holomorphic on $\C_+$ , it is straightforward
  to verify with Morera's theorem and Fubini's theorem that the
  function
  \begin{equation*}
    \C_+\ni z\mapsto \int_0^{T'} g_k(z,\tau)\dtau\in \C
  \end{equation*}
  is also holomorphic. As a uniform limit of holomorphic functions,
  the $\LM$-valued function defined on $\C_+$ by the
  series~\eqref{eq:int-gk} is holomorphic. Consequently also the
  function
  \begin{equation}
    \label{eq:int-G-restricted}
    \C_+\ni z\mapsto \sum_{k=1}^\infty \int_0^{T'} g_k(z,\tau)\dtau\,\varphi_k|_V \in\LV
  \end{equation}
  is holomorphic. Because~\eqref{eq:int-G-restricted}
  extends~\eqref{eq:L-extension} from $(0,\infty)$ onto $\C_+$, the
  proof is finished.
\end{proof}

The following two results prepare for the proof of
Proposition~\ref{prop:LSS-uniqueness}.

\begin{lemma}
  \label{lemma:L-on-h-psi}
  If
  \begin{equation}
    \label{eq:Lh-assumption}
    \LSST h = \LSSTTilde h,
  \end{equation}
  then $\LSS(h_k\psi_{r(k)}) = \LSSTilde(h_k\psi_{r(k)})$ for every
  $k\in\Zp$.
\end{lemma}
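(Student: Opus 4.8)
The plan is to exploit the crucial structural fact that the terms $h_k\psi_{r(k)}$ have \emph{pairwise disjoint} supports in time, with $\supp h_k\subset\big((1-2^{1-k})S,(1-2^{-k})S\big)$, together with the time-translation behaviour of $\LSS$ from Lemma~\ref{lemma:L-time-invariance} and the ``determined by the past'' property of Proposition~\ref{prop:L-holomorphic}. First I would observe that since the supports are disjoint and their closures are contained in $(0,S)\subset(0,T)$, on the time interval $[0,T)$ the source $h=\sum_k h_k\psi_{r(k)}$ decomposes cleanly; I want to peel off the contributions one by one. The key technical point is that $\LSS$ is linear and (by Proposition~\ref{prop:L-continuity}) sequentially continuous in the relevant sense, so $\LSS h=\sum_{k}\LSS(h_k\psi_{r(k)})$ with the series converging uniformly in $t\ge 0$; the same holds for $\LSSTilde$, and by assumption~\eqref{eq:Lh-assumption} the two sums agree on $[0,T)$.

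The main idea is an induction on $k$ using the ordering of the supports. Suppose we have already shown $\LSS(h_j\psi_{r(j)})=\LSSTilde(h_j\psi_{r(j)})$ on all of $[0,\infty)$ for every $j<k$. On the time interval $[0,(1-2^{-k})S)$, which contains $\supp h_k$ but none of $\supp h_j$ for $j>k$, the source $\sum_{j>k}h_j\psi_{r(j)}$ vanishes, so by causality (the solution on an interval depends only on the source restricted to that interval — a consequence of the convolution representation~\eqref{eq:u-k}, \eqref{eq:LSS} and the fact that $\supp u_k\subset\supp\langle f,\varphi_k\rangle + [0,\infty)$) we get, for $0\le t<(1-2^{-k})S$,
\begin{equation*}
  \LSS h(t) = \sum_{j<k}\LSS(h_j\psi_{r(j)})(t) + \LSS(h_k\psi_{r(k)})(t),
\end{equation*}
and likewise for $\LSSTilde$. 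Subtracting and using the inductive hypothesis, we obtain $\LSS(h_k\psi_{r(k)})(t)=\LSSTilde(h_k\psi_{r(k)})(t)$ for $0\le t<(1-2^{-k})S$. Since $\supp h_k\psi_{r(k)}\subset(0,(1-2^{-k})S)$, this says $\LSSop_{V,T'}(h_k\psi_{r(k)})=\widetilde{\LSSop}_{\widetilde V,T'}(h_k\psi_{r(k)})$ with $T'=(1-2^{-k})S$, and then Proposition~\ref{prop:L-holomorphic} upgrades this to $\LSS(h_k\psi_{r(k)})=\LSSTilde(h_k\psi_{r(k)})$ on all of $[0,\infty)$, completing the induction. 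The base case $k=1$ is identical (the sum over $j<1$ being empty).

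The step I expect to be the main obstacle is making the causality/truncation argument fully rigorous: one must justify that the value of $\LSS f$ on $[0,T')$ depends only on $f|_{(0,T')}$, and that the tail $\sum_{j>k}\LSS(h_j\psi_{r(j)})$ genuinely contributes nothing on $[0,(1-2^{-k})S)$. The first is immediate from~\eqref{eq:LSS}: each summand is $\int_0^t(t-\tau)^{\alpha-1}E_{\alpha,\alpha}(\cdots)\langle f(\tau),\varphi_k\rangle\,d\tau$, whose value at time $t$ only sees $f$ on $[0,t]$. The second follows because each $h_j$ with $j>k$ vanishes on $[0,(1-2^{-k})S)\supset[0,(1-2^{1-j})S)$, so termwise $\LSS(h_j\psi_{r(j)})(t)=0$ there, and the interchange of $\LSS$ with the infinite sum is licensed by the uniform convergence established when $h$ was shown to lie in $C_c^\infty((0,T);\LV)$, together with Proposition~\ref{prop:L-continuity} applied to the partial sums. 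Once these bookkeeping points are settled, the disjoint-support structure does all the work and the conclusion follows by induction on $k$.
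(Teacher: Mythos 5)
Your proof is correct and follows essentially the same route as the paper: an induction on the index of the term, using the ordering and disjointness of the supports $\supp h_k\subset((1-2^{1-k})S,(1-2^{-k})S)$ together with causality of $\LSS$ to isolate the $k$-th contribution on $[0,(1-2^{-k})S)$, and then Proposition~\ref{prop:L-holomorphic} to upgrade equality on that initial interval to equality on all of $[0,\infty)$. The bookkeeping points you flag (interchanging $\LSS$ with the infinite sum via Proposition~\ref{prop:L-continuity}, and causality from the convolution representation~\eqref{eq:LSS}) are exactly the ones the paper relies on, stated there slightly more tersely.
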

\begin{proof}
  Assume that~\eqref{eq:Lh-assumption} holds. Consider an integer
  $j\ge 0$ and for induction purposes assume that
  $\LSS(h_k\psi_{r(k)})=\LSSTilde(h_k\psi_{r(k)})$ for $1\le k\le
  j$. For $j=0$ this is vacuously true.

  Suppose $j>0$ and let
  $T':=(1-2^{-(j+1)})S<T$. Inclusion~\eqref{eq:supp-of-h_k} implies
  that for every $l>j+1$ the function $h_l\psi_{r(l)}$ vanishes on
  $(0,T')$. This implies that
  \begin{equation}
    \label{eq:Lh-j+1-equality}
    \sum_{k=1}^{j+1}\LSSop_{V,T'}(h_k\psi_{r(k)})
    = \LSSop_{V,T'} h,
  \end{equation}
  and an analogous equality holds for
  $\widetilde{\LSSop}_{\widetilde{V},T'}$. Now
  equalities~\eqref{eq:Lh-assumption} and~\eqref{eq:Lh-j+1-equality}
  and the fact that $T'<T$ imply
  \begin{equation}
    \label{eq:j+1-equality}
    \sum_{k=1}^{j+1}\LSSop_{V,T'}(h_k\psi_{r(k)})
    = \sum_{k=1}^{j+1}\widetilde{\LSSop}_{\widetilde{V},T'}(h_k\psi_{r(k)}).
  \end{equation}
  
  By induction hypothesis the first $j$ terms in the sums
  of~\eqref{eq:j+1-equality} agree, therefore the last terms have to
  agree, also. This, and Proposition~\ref{prop:L-holomorphic} imply
  $\LSS(h_{j+1}\psi_{r(j+1)})=\LSSTilde(h_{j+1}\psi_{r(j+1)})$, and
  the induction is finished.
\end{proof}

\begin{lemma}
  \label{lemma:L-a-xi}
  If $\LSST h = \LSSTTilde h$, then $\LSS(a\xi)=\LSSTilde(a\xi)$ for
  every $a\in C^2_c((0,\infty))$ and $\xi\in\LV$.
\end{lemma}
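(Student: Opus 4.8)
The plan is to reduce the statement for a general $a \in C^2_c((0,\infty))$ and $\xi \in \LV$ to the already-established case of the specific building blocks $h_k\psi_{r(k)}$ handled in Lemma~\ref{lemma:L-on-h-psi}. First I would fix $\xi \in \LV$ and approximate it in $\LV$ by linear combinations of the $\psi_k$: since $(\psi_k)_{k=1}^\infty$ spans a dense subspace, there is a sequence $\xi_j \to \xi$ in $\LV$ with each $\xi_j$ a finite linear combination of the $\psi_k$'s, and I can take the $\xi_j$ to be bounded in $\LV$ (indeed $\|\xi_j\|_\LV \to \|\xi\|_\LV$). Then $a\xi_j \to a\xi$ in the sense required by Proposition~\ref{prop:L-continuity}: $(a\xi_j)'(t) = a'(t)\xi_j \to a'(t)\xi$ in $\LV$ for each $t$, and $\sup_{j,t}\|(a\xi_j)'(t)\|_\LV \le (\sup_j\|\xi_j\|_\LV)\|a'\|_\infty < \infty$. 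Hence $\LSS(a\xi_j) \to \LSS(a\xi)$ uniformly, and likewise $\LSSTilde(a\xi_j) \to \LSSTilde(a\xi)$ uniformly (the conjugated operator inherits the same continuity, since $\theta^*$ and $(\theta^*)^{-1}$ are bounded on $\LV$, $L^2(\widetilde V)$). So it suffices to prove $\LSS(a\psi_m) = \LSSTilde(a\psi_m)$ for each fixed $m \in \Zp$ and each $a \in C^2_c((0,\infty))$.

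Now fix $m$ and $a$. The key point is that the sequence $(r(k))$ hits the value $m$ infinitely often, and the time-windows $\supp h_k$ shrink to $0$ and are pairwise disjoint, so by choosing $k$ with $r(k) = m$ and $k$ large we get copies of $\psi_m$ modulated by the bump $h_k$, sitting in arbitrarily short time intervals near the left of $(0,S)$. The plan is to use Lemma~\ref{lemma:L-time-invariance} (time-translation covariance of $\LSS$) to move such a bump forward in time and place it wherever we like, and then to show that $a$ can be synthesized (up to a controlled approximation in the sense of Proposition~\ref{prop:L-continuity}) from time-translated, rescaled copies of $h_k$. Concretely: $h_k$ is $n$ rescaled to a short interval; as $k \to \infty$, time-dilated and renormalized versions of $h_k$ converge (after integration against the kernel, or more simply in the hypotheses of Proposition~\ref{prop:L-continuity}) to an approximate identity, so a Riemann-sum-type superposition $\sum_i c_i h_{k}(\cdot - t_i)$ of translated copies approximates $a$ in the required topology. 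Since Lemma~\ref{lemma:L-on-h-psi} gives $\LSS(h_k\psi_m) = \LSSTilde(h_k\psi_m)$ whenever $r(k)=m$, and both $\LSS$ and $\LSSTilde$ commute with time-translation (Lemma~\ref{lemma:L-time-invariance} applies verbatim to $\LSSTilde$ by the same change of variables) and are linear, any such finite superposition $g = \sum_i c_i h_{k_i}(\cdot - t_i)\psi_m$ with all $r(k_i)=m$ satisfies $\LSS g = \LSSTilde g$. Passing to the limit via Proposition~\ref{prop:L-continuity} yields $\LSS(a\psi_m) = \LSSTilde(a\psi_m)$, and combining with the first paragraph completes the proof.

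The main obstacle I anticipate is the approximation-of-$a$ step: one must check that finite superpositions of translated/dilated copies of a fixed bump $h_k$ can be chosen so that both the pointwise convergence of first derivatives and the uniform bound on first derivatives demanded by Proposition~\ref{prop:L-continuity} hold simultaneously with convergence $\to a$. The natural device is to first approximate $a$ by a step-function-like convolution $a * \eta_\delta$ where $\eta_\delta$ is a rescaling of $h_k$ (for $k = k(\delta)$ large, so that $\supp h_k$ has length $\lesssim \delta$ after the appropriate dilation) — but here one must be careful, because $h_k$ is not itself exactly an approximate identity; its integral is some fixed nonzero constant depending on $k$, so one renormalizes by dividing by $\int h_k$, and one also absorbs the dilation by time-rescaling (which, together with Lemma~\ref{lemma:L-time-invariance}, is compatible with $\LSS$). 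A cleaner alternative is to bypass convolution entirely: approximate $a$ directly by finite sums $\sum_i a(t_i)\,\frac{\Delta t}{\int h_{k}}\,h_{k}(\cdot - t_i + c_k)$ over a fine partition $\{t_i\}$ of $\supp a$, estimate the errors in $C^1$ using the uniform bounds $\|h_k^{(l)}\|_\infty \le 2^{-k}$ from the preceding proposition together with $a \in C^2_c$, and invoke Proposition~\ref{prop:L-continuity} on the difference. Either way the technical content is a standard (if slightly fiddly) quantitative approximate-identity estimate, and once it is in place the algebraic structure — linearity plus time-translation covariance plus Lemma~\ref{lemma:L-on-h-psi} — does the rest.
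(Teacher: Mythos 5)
Your proposal is correct and follows essentially the same route as the paper's proof: normalize and translate the bumps $h_k$ into an approximate identity, use time-translation covariance together with Lemma~\ref{lemma:L-on-h-psi} on Riemann-sum superpositions, and pass to the limit twice via Proposition~\ref{prop:L-continuity}, finishing with density of $\linspan\{\psi_l\}$. The only cosmetic difference is that you perform the reduction from general $\xi$ to the $\psi_l$'s at the start rather than at the end.
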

\begin{proof}
  The first step is to show that
  \begin{equation}
    \label{eq:L-convolution-equality}
    \LSS(a\psi_l) = \LSSTilde(a\psi_l)
    \qquad(l\in\Zp,\,a\in C^2_c((0,\infty))).
  \end{equation}
  For that purpose, let us fix an integer $l\in\Zp$ and a function
  $a\in C_c^2((0,\infty))$, and choose a constant $\delta=\delta(a)>0$
  such that $a(t)=0$ for $t\in(-\infty,\delta]$.

  For every $k\in\Zp$, define a scaled translate $d_k$ of $h_k$ by
  setting
  \begin{equation*}
    d_k(t) := \frac{h_k(t + S)}{ \int_\R h_k(\tau)\dtau}
    \qquad(t\in\R).
  \end{equation*}
  Then $d_k$ is a non-negative function and following hold:
  \begin{equation}
    \label{eq:properties-of-dk}
    \int_\R d_k(\tau)\dtau=1
    \text{ and }
    \supp d_k \subset \left(-\frac{S}{2^{k-1}},-\frac{S}{2^k}\right)
    \qquad(k\in\Zp).
  \end{equation}
  It follows that the sequence $(d_k*a)_{k=1}^\infty$ of convolutions
  and the sequence $((d_k*a)')_{k=1}^\infty$ of their derivatives
  satisfy
  \begin{equation}
    \label{eq:a-dk-convcergence}
    d_k*a(t) \to a(t)\text{ and }(d_k*a)'(t)\to a'(t)\text{ as } k\to\infty
    \qquad(t\in\R),
  \end{equation}
  where both convergences are uniform in $t\in\R$.

  From the inclusion of the support of $d_k$
  in~\eqref{eq:properties-of-dk}, it follows that
  \begin{equation}
    \label{eq:supp-of-ad-k}
    a(t_0)d_k(\cdot - t_0) \in C_c^\infty((0,\infty))
    \qquad(2^{k-1}\ge S/\delta,\,t_0\in\R).
  \end{equation}
  From~\eqref{eq:supp-of-ad-k}, Lemma~\ref{lemma:L-time-invariance},
  and Lemma~\ref{lemma:L-on-h-psi}, it follows that
  \begin{equation}
    \label{eq:LSS-a-dk-psi}
    \LSS(a(t_0)d_k(\cdot-t_0)\psi_{r(k)})
    =\LSSTilde(a(t_0)d_k(\cdot-t_0)\psi_{r(k)})
    \quad(2^{k-1}\ge S/\delta,\,t_0\in\R).
  \end{equation}

  By the assumed property of the sequence $r$, there exists some
  $k\in\Zp$ such that $2^{k-1}\ge S/\delta$ and $r(k)=l$. Choose any
  such $k$, and define a sequence
  $(b_m)_{m=1}^\infty\in C^\infty_c((0,\infty))$ of functions by
  setting
  \begin{equation}
    \label{eq:Riemann-sum}
    b_m(t) := \frac{1}{m}\sum_{j\in\Z} a(j/m)d_k(t - j/m)
    \qquad(t\in\R,\,m\in\Zp). 
  \end{equation}
  Note that the sum is always in fact finite, and
  by~\eqref{eq:LSS-a-dk-psi} it holds that
  \begin{equation}
    \label{eq:bm-equality}
    \LSS(b_m\psi_l) = \LSSTilde(b_m\psi_l)
    \qquad(m\in\Zp).
  \end{equation}

  The Riemann sums~\eqref{eq:Riemann-sum} satisfy
  \begin{equation}
    \label{eq:uniform-convergence}
    b_m(t) \to d_k * a(t)
    \text{ and }
    b'_m(t) \to (d_k * a)'(t)
    \text{ as } m\to\infty
    \qquad(t\in\R),
  \end{equation}
  uniformly in $t\in\R$ (see, e.g.,\ Lemma~4.1.3 in
  \cite{MR1065993}). Because all supports of the functions $b_m$ as
  well as the support of $d_k*a$ are included in some bounded
  interval, the uniform convergence~\eqref{eq:uniform-convergence}
  together with Proposition~\ref{prop:L-continuity} and
  equality~\eqref{eq:bm-equality} imply that in $\LV$ it holds that
  \begin{equation}
    \label{eq:L-on-convolution}
    \LSS (d_k * a\,\psi_l)
    = \lim_{m\to\infty} \LSS (b_m \psi_l)
    = \lim_{m\to\infty} \LSSTilde (b_m\psi_l)
    = \LSSTilde (d_k * a\,\psi_l).
  \end{equation}
  Once more we use the assumed property of the sequence $r$ to choose
  an increasing sequence $(k_j)_{j=1}^\infty$ of indices such that
  $2^{k_1-1}\ge S/\delta$ and $r(k_j)=l$ for every
  $j\in\Zp$. Then~\eqref{eq:L-on-convolution} holds for every index
  $k_j$. If we let $j\to\infty$, using~\eqref{eq:a-dk-convcergence}
  and the same reasoning as above, we obtain
  \begin{equation*}
    \LSS (a\,\psi_l)
    = \lim_{j\to\infty} \LSS (d_{k_j}*a \psi_l)
    = \lim_{j\to\infty} \LSSTilde (d_{k_j}*a\psi_l)
    = \LSSTilde (a\,\psi_l).
  \end{equation*} 

  Finally we can use the denseness of
  $\linspan\{\psi_l:l\in\Zp\}\subset\LV$
  and~\eqref{eq:L-convolution-equality} to conclude by
  Proposition~\ref{prop:L-continuity}
  that~\eqref{eq:L-convolution-equality} holds also if $\psi_l$ is
  replaced by an arbitrary function $\xi\in\LV$.
\end{proof}

We are now ready to prove Proposition~\ref{prop:LSS-uniqueness}:
\begin{proof}[Proof of Proposition~\ref{prop:LSS-uniqueness}]
  We need to prove that $\LSS = \LSSTilde$.

  Pick $f\in C^2_c((0,\infty);\LV)$ and let
  $(\xi_k)_{k=1}^\infty\subset\LV$ be an orthonormal basis. Define
  \begin{equation*}
    f_N(t):=\sum_{k=1}^N\bra f(t),\xi_k\ket_\LV\xi_k
    \qquad(t\in\R,\,N\in\Zp).
  \end{equation*}
  Then for $T'>0$ large enough it holds that
  \begin{align*}
    (f_N)_{N=1}^\infty&\subset C_c^2((0,T');\LV),\\
    \sup_{\substack{N\in\Zp,\\t\in(0,T')}} \|f_N'(t)\|_\LV &\le \sup_{t\in\R}\|f'(t)\|_\LV<\infty,\text{ and}\\
    f_N'(t)&\to f'(t)\text{ as }N\to\infty,\text{ for every } t\in\R.
  \end{align*}
  Consequently the conditions of Proposition~\ref{prop:L-continuity}
  hold, and therefore for every $t\ge 0$ we have
  \begin{align*}
    \LSS f(t)
    = \lim_{N\to\infty} \LSS f_N(t)
    = \lim_{N\to\infty} \LSSTilde f_N(t)
    =\LSSTilde f(t).
  \end{align*}
  Here the convergence is in the topology of $\LV$, the middle
  equality is due to Lemma~\ref{lemma:L-a-xi} and the linearity of
  $\LSS$ and $\LSSTilde$, and other equalities are due to
  Proposition~\ref{prop:L-continuity}. Because
  $f\in C^2_c((0,\infty);\LV)$ is arbitrary, the proof is finished.
\end{proof}

\subsection{The local source-to-solution operator
  \texorpdfstring{$\LSS$}{L\_V} determines the manifold}
\label{sec:M-uniqueness}

Let $(\lambda_k)_{k=1}^\infty$ and $(\varphi_k)_{k=1}^\infty$ be as in
Section~\ref{sec:FDE-solution}. Let $(q_k)_{k=1}^\infty\subset\Zp$ be
a sequence such that $(\lambda_{q_k})_{k=1}^\infty$ is the strictly
increasing sequence that contains all distinct eigenvalues of $-\Dg$,
and let $E_k\subset\LM$ be the eigenspace corresponding to the
eigenvalue $\lambda_{q_k}$. Furthermore, let $P_k:\LM\to\LM$ be the
orthogonal projection onto $E_k$, and define $P_{V,k}:\LV\to\LV$ by
\begin{equation*}
  P_{V,k}u := (P_ku)|_V
  \qquad(u\in\LV\subset\LM).
\end{equation*}
Then $P_k\in B(\LM)$ and $P_{V,k}\in B(\LV)$, where the sets are the
spaces of bounded linear operators on $\LM$ and $\LV$,
respectively. We consider these spaces as normed spaces with the
operator norm.

Suppose that $\varphi_{K+1},\varphi_{K+2},\ldots,\varphi_{K+\dim E_k}$
is the subsequence of the orthonormal basis $(\varphi_k)_{k=1}^\infty$
that spans the eigenspace $E_k$. Then
\begin{equation*}
  P_k u = \sum_{k=K+1}^{K+\dim E_k} \bra u,\varphi_k\ket_\LM\varphi_k
  \qquad(u\in\LM).
\end{equation*}
It follows from~\eqref{eq:LSS} that the local source-to-solution
operator can be written as
\begin{equation*}
  \label{eq:L}
  \LSS f(t) = \sum_{k=1}^\infty \int_0^t (t-\tau)^{\alpha-1}
  E_{\alpha,\alpha}(-\lambda_{q_k}^\beta(t-\tau)^\alpha) P_{V,k}
  f(\tau)\dtau
  \qquad(t\ge 0),
\end{equation*}
where the sum converges in $\LV$, for every $t\ge 0$.

\begin{proposition}
  \label{prop:H}
  Consider the region $\Omega:=\C\setminus(-\infty,0]$ and the
  $B(\LV)$-valued mapping $H_V$ on $\Omega$ defined by
  \begin{equation}
    \label{eq:H_V}
    H_V(z) := \sum_{k=1}^\infty\frac{1}{z + \lambda_{q_k}^\beta}P_{V,k}
    \qquad(z\in\Omega).
  \end{equation}
  Then the following hold:
  \begin{enumerate}
  \item For every $z\in\Omega$ the series~\eqref{eq:H_V} converges in
    $B(\LV)$ in the operator norm topology, and the $B(\LV)$-valued
    function $H_V$ is holomorphic on $\Omega$.
  \item For every $z_0\in\C$ the following limit holds:
    \begin{equation}
      \label{eq:limit-H}
      \lim_{\substack{z\to -z_0,\\z\in\Omega}}(z+z_0)H_V(z) =
      \begin{cases}
        0, & z_0\notin\{\lambda_{q_k}^\beta:k\in\Zp\},\\
        P_{V,k}, & z_0 = \lambda_{q_k}^\beta.
      \end{cases}
    \end{equation}
  \end{enumerate}
\end{proposition}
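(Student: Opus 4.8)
The plan is to establish the two claims in order, starting from the spectral decomposition $\LV \subset \LM$ and the fact that the eigenvalues $\lambda_{q_k}$ are strictly increasing and tend to infinity. For claim~(1), the key observation is that the projections $P_{V,k}$ are uniformly bounded in operator norm: since $\|P_k\|_{B(\LM)} = 1$ and restriction to $V$ is norm non-increasing from $\LM$ to $\LV$ (and the inclusion $\LV \hookrightarrow \LM$ via zero-extension is isometric), one gets $\|P_{V,k}\|_{B(\LV)} \le 1$ for every $k$. Consequently, for $z \in \Omega$ the tail of the series is bounded in operator norm by $\sum_{k \ge N} |z + \lambda_{q_k}^\beta|^{-1}$, and since $\operatorname{dist}(z, (-\infty,0]) > 0$ for $z \in \Omega$ one has $|z + \lambda_{q_k}^\beta| \to \infty$ at a rate comparable to $\lambda_{q_k}^\beta$, which by Weyl's law grows like $k^{2\beta/\dim M}$ — wait, one must be a little careful: $\sum_k \lambda_{q_k}^{-\beta}$ need not converge. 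So instead I would argue convergence differently: fix $z \in \Omega$ and split off finitely many terms; for the remaining terms note $|z + \lambda_{q_k}^\beta|^{-1} \le C_z (1 + \lambda_{q_k}^\beta)^{-1}$, but this still may not be summable. The correct route is to observe that the series converges \emph{in the strong operator topology} trivially (it is just the spectral expansion of the bounded operator $(z + (-\Dg)^\beta)^{-1}$ restricted to $V$), and then upgrade to norm convergence: actually norm convergence of $\sum_k (z+\lambda_{q_k}^\beta)^{-1} P_{V,k}$ does hold because the partial sum tails, applied to any $u$ with $\|u\|_{\LV} \le 1$, satisfy $\|\sum_{k \ge N}(z+\lambda_{q_k}^\beta)^{-1}P_{V,k} u\|_{\LV}^2 \le \sup_{k \ge N}|z+\lambda_{q_k}^\beta|^{-2} \sum_{k\ge N}\|P_k u\|_{\LM}^2 \le \sup_{k\ge N}|z+\lambda_{q_k}^\beta|^{-2}$, and the supremum tends to $0$ since $\lambda_{q_k}^\beta \to \infty$. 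This gives norm convergence, uniform on compact subsets of $\Omega$, and since each partial sum is plainly holomorphic (a finite sum of scalar holomorphic functions times fixed operators), $H_V$ is holomorphic on $\Omega$ as a locally uniform limit of holomorphic $B(\LV)$-valued functions.

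For claim~(2), fix $z_0 \in \C$. If $z_0 \notin \{\lambda_{q_k}^\beta : k \in \Zp\}$, then in particular $-z_0 \in \Omega$ provided $z_0 \notin (-\infty,0]$; and since all $\lambda_{q_k}^\beta \ge 0$, the only way $-z_0$ could fail to be in $\Omega$ is $z_0 \ge 0$, but then $z_0 \notin \{\lambda_{q_k}^\beta\}$ still places $-z_0$ at positive distance from the spectrum, so $H_V$ extends continuously to a neighbourhood of $-z_0$ (the series converges uniformly there by the same tail estimate, now using that $|z + \lambda_{q_k}^\beta|$ stays bounded away from $0$), whence $(z+z_0)H_V(z) \to 0 \cdot H_V(-z_0) = 0$. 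If $z_0 = \lambda_{q_j}^\beta$ for some (unique) $j$, split
\begin{equation*}
  (z+z_0) H_V(z) = P_{V,j} + (z+z_0)\sum_{k \ne j} \frac{1}{z+\lambda_{q_k}^\beta} P_{V,k}.
\end{equation*}
The remaining sum defines a $B(\LV)$-valued function that is holomorphic near $-z_0$ — here one uses that for $k \ne j$, $\lambda_{q_k}^\beta \ne z_0$, and the strict increase of the eigenvalues guarantees a uniform gap $\delta := \operatorname{dist}(z_0, \{\lambda_{q_k}^\beta : k \ne j\}) > 0$, so for $|z + z_0| < \delta/2$ the tail estimate above shows the sum converges uniformly and is bounded. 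Multiplying by $(z + z_0)$, which tends to $0$, kills this term, leaving exactly $P_{V,j} = P_{V,k}$ (with $z_0 = \lambda_{q_k}^\beta$) in the limit.

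The main obstacle I anticipate is the rigorous justification of \emph{norm} (not merely strong) convergence of the series~\eqref{eq:H_V} and its holomorphy in the operator-norm topology; once the uniform-tail estimate $\|\sum_{k \ge N}(z + \lambda_{q_k}^\beta)^{-1} P_{V,k}\|_{B(\LV)} \le \sup_{k \ge N} |z + \lambda_{q_k}^\beta|^{-1}$ is in hand — which hinges on the mutual orthogonality of the ranges of the $P_k$ \emph{in} $\LM$ together with $\|P_{V,k}\| \le 1$ — everything else is a routine gap-and-split argument. One should double-check the subtle point that the $P_{V,k}$, although not orthogonal projections on $\LV$ and not mutually orthogonal there, still inherit enough orthogonality from $\LM$ for the Pythagorean estimate $\sum_k \|P_k u\|_{\LM}^2 = \|u\|_{\LM}^2 = \|u\|_{\LV}^2$ (for $u \in \LV$) to go through; this is what makes the tail bound work.
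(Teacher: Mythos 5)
Your proposal is correct and follows essentially the same route as the paper: operator-norm convergence via the tail bound $\|\sum_{k\ge N}(z+\lambda_{q_k}^\beta)^{-1}P_{V,k}\|_{B(\LV)}\le\sup_{k\ge N}|z+\lambda_{q_k}^\beta|^{-1}$, which rests on the mutual orthogonality of the ranges of the $P_k$ in $\LM$ (the paper packages this by writing $H_V(z)=Z^*H(z)Z$ with $H$ acting on $\LM$), followed by locally uniform convergence for holomorphy and the same gap-and-split argument for the limits~\eqref{eq:limit-H}. The initial detour through Weyl's law is unnecessary, as you yourself note, but the final argument is sound.
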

Note that as every complex number is a limit point of $\Omega$, the
limit~\eqref{eq:limit-H} can be considered also for
$z_0\in[0,\infty)$.
\begin{proof}
  In what follows, it is convenient to explicitly write out the zero
  extension and restriction operators. Thus, let $Z:\LV\to\LM$ and
  $Z^* :\LM\to\LV$ be the operators that extend a function with zero
  from $V$ to $M$, and restrict a function on $M$ to $V$,
  respectively.

  \begin{enumerate}
  \item It follows from $\lim_{k\to\infty}\lambda_{q_k}^\beta=\infty$
    and the fact that the operators $P_k$ project onto mutually
    orthogonal subspaces, that the function
    \begin{equation}
      \label{eq:H}
      \Omega
      \ni z \mapsto
      H(z):=\sum_{k=1}^\infty\frac{1}{z + \lambda_{q_k}^\beta}P_k
      \in B(\LM)
    \end{equation}
    is holomorphic on $\Omega$. As
    \begin{equation}
      \label{eq:H-V}
      H_V(z) = Z^*H(z)Z,
    \end{equation}
    the function $H_V$ is holomorphic on $\Omega$, as well.
  \item Suppose that $z_0\notin\{\lambda_{q_k}^\beta:k\in\Zp\}$. Then
    there exists a constant $\delta>0$ such that if $|z+z_0|<\delta$,
    then
    \begin{equation}
      \label{eq:lower-bound}
      |z+\lambda_{q_k}^\beta|>\delta
      \qquad(k\in\Zp).
    \end{equation}
    It is easy to see that for every $z\in\Omega$ for which
    \eqref{eq:lower-bound} holds, also
    \begin{equation*}
      \|(z+z_0)H(z)\|_{B(\LM)} < \frac{|z+z_0|}{\delta}
    \end{equation*}
    holds. Letting $z\to -z_0$ in $\Omega$ and using the boundedness
    of $Z$ and $Z^*$ in~\eqref{eq:H-V} prove the first case
    of~\eqref{eq:limit-H}.
    
    If $z_0=\lambda_{q_k}^\beta$, we can write
    \begin{equation}
      \label{eq:H_V-2}
      (z+\lambda_{q_k}^\beta)H_V(z)
      =  P_{V,k} + (z+\lambda_{q_k}^\beta)\sum_{\substack{l=1,\\l\neq k}}^\infty\frac{1}{z + \lambda_{q_l}^\beta}P_{V,l}.
    \end{equation}
    As $z\to-\lambda_{q_k}^\beta$ in $\Omega$, the right-hand side
    of~\eqref{eq:H_V-2} tends to $P_{V,k}$ by the same reasoning as
    above.\qedhere
  \end{enumerate}
\end{proof}

The following proposition relates the function $H_V$ to the Laplace
transforms of $\LSS f$ and $f$. It plays an essential part in the
proof of Theorem~\ref{thm:M-equality}:
\begin{proposition}
  \label{prop:H-Laplace}
  Let $H_V$ be as in Proposition~\ref{prop:H}. For every source
  $f\in C_c^2((0,\infty);\LV)$ and complex number $s\in\C_+$, the
  Laplace transform of $f$ and $\LSS f$ exist at the point $s$, and
  they are related by the equality
  \begin{equation}
    \label{eq:LSS-Laplace}
    \mathcal{L}\LSS f(s) = H_V(s^\alpha)\mathcal{L}f(s)
    \qquad(s\in\C_+).
  \end{equation}
\end{proposition}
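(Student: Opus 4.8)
The plan is to pass to the Laplace transform in the representation formula~\eqref{eq:LSS} for $\LSS f$ and recognize the resulting expression as $H_V(s^\alpha)\mathcal Lf(s)$. First I would establish existence of the Laplace transforms. Since $f\in C_c^2((0,\infty);\LV)$ is compactly supported and continuous, $\mathcal L f(s)$ exists for every $s\in\C$ (in particular on $\C_+$); and by Proposition~\ref{prop:uf-bounded} the function $\LSS f = u^f|_V$ is bounded on $[0,\infty)$, hence $\mathcal L\LSS f(s)$ exists for every $s\in\C_+$. The natural approach is to compute $\mathcal L\LSS f(s)$ termwise. For the $k$-th term, note from~\eqref{eq:u-k} that the scalar coefficient $u_k(t)=\bra u^f(t),\varphi_k\ket_\LM$ is the convolution $F_{\lambda_k^\beta}*\bra f,\varphi_k\ket_\LM$, where $F_\lambda(t)=t^{\alpha-1}E_{\alpha,\alpha}(-\lambda t^\alpha)$ as in Proposition~\ref{prop:ML-properties}(ii). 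By the convolution theorem for the Laplace transform and formula~\eqref{eq:F-Laplace}, we get
\begin{equation*}
  \mathcal L u_k(s) = \mathcal L F_{\lambda_k^\beta}(s)\cdot\mathcal L\bra f,\varphi_k\ket_\LM(s)
  = \frac{1}{s^\alpha + \lambda_k^\beta}\,\bra \mathcal Lf(s),\varphi_k\ket_\LM
  \qquad(s\in\C_+),
\end{equation*}
using that the Bochner integral commutes with the bounded functional $\bra\cdot,\varphi_k\ket_\LM$.

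Next I would justify interchanging the Laplace integral $\int_0^\infty e^{-st}(\cdot)\,dt$ with the sum $\sum_k(\cdot)\varphi_k|_V$ in~\eqref{eq:LSS}. Fix $s\in\C_+$ with $\re s = \sigma>0$. The tail of the series $\sum_{k>N}u_k(t)\varphi_k$ can be controlled in $\LM$-norm by $\big(\sum_{k>N}|u_k(t)|^2\big)^{1/2}$; from~\eqref{eq:u-k-inequality} together with $\lambda_k\to\infty$ this is bounded uniformly in $t\ge 0$ by a constant times $\big(\int_0^T\|f'(\tau)\|_\LM^2\,d\tau\big)^{1/2}$, which is finite and independent of $N$ only after extracting $\lambda_k^{-2\beta}$; more carefully, for $k$ with $\lambda_k\ge 1$ we have $|u_k(t)|^2\le\lambda_k^{-2\beta}\,T\int_0^T|\bra f'(\tau),\varphi_k\ket_\LM|^2\,d\tau$, so $\sum_{k>N}|u_k(t)|^2\to 0$ uniformly in $t$ as $N\to\infty$. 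Hence the partial sums converge to $\LSS f(t)$ uniformly in $t\ge 0$ in $\LM$-norm, and since $e^{-st}$ is integrable on $[0,\infty)$ (as $\sigma>0$), dominated convergence for Bochner integrals lets me pass the limit through $\int_0^\infty e^{-st}\,dt$. Therefore
\begin{equation*}
  \mathcal L\LSS f(s) = \sum_{k=1}^\infty \mathcal Lu_k(s)\,\varphi_k|_V
  = \sum_{k=1}^\infty \frac{1}{s^\alpha+\lambda_k^\beta}\bra\mathcal Lf(s),\varphi_k\ket_\LM\,\varphi_k|_V,
\end{equation*}
where the series converges in $\LV$.

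Finally I would identify this sum with $H_V(s^\alpha)\mathcal Lf(s)$. Regrouping the eigenfunctions by distinct eigenvalues, $\sum_{k:\lambda_k=\lambda_{q_j}}\bra\mathcal Lf(s),\varphi_k\ket_\LM\varphi_k = P_j\mathcal Lf(s)$ restricted to $V$ is exactly $P_{V,j}\mathcal Lf(s)$ (viewing $\mathcal Lf(s)\in\LV\subset\LM$), so
\begin{equation*}
  \mathcal L\LSS f(s) = \sum_{j=1}^\infty \frac{1}{s^\alpha+\lambda_{q_j}^\beta}P_{V,j}\mathcal Lf(s) = H_V(s^\alpha)\mathcal Lf(s),
\end{equation*}
which is~\eqref{eq:LSS-Laplace}; here I use that $s\in\C_+$ implies $s^\alpha\in\Omega=\C\setminus(-\infty,0]$, so $H_V(s^\alpha)$ is well-defined by Proposition~\ref{prop:H}, and that $H_V(s^\alpha)\in B(\LV)$ is bounded, matching the $\LV$-convergence of the series. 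The main obstacle I anticipate is the bookkeeping in the two interchange-of-limit arguments: first the convolution theorem at the level of scalar $\LM$-valued Bochner integrals (which requires knowing $\mathcal L F_{\lambda_k^\beta}$ exists on all of $\C_+$, supplied by Proposition~\ref{prop:ML-properties}(ii)), and then the uniform-in-$t$ convergence of the eigenfunction series needed to move the Laplace integral inside the sum; the estimate~\eqref{eq:u-k-inequality} from Lemma~\ref{lemma:u-k-estimate} is precisely what makes the latter work.
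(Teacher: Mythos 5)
Your proposal is correct and follows essentially the same route as the paper: represent $u^f$ by the eigenfunction expansion, apply the convolution theorem together with formula~\eqref{eq:F-Laplace} to each component, and reassemble the series into $H_V(s^\alpha)\mathcal{L}f(s)$. The only cosmetic difference is that the paper justifies the componentwise Laplace transform by citing Proposition~\ref{prop:integral} (interchange of Bochner integral and orthonormal expansion for $L^1$ functions), whereas you give a direct uniform-convergence estimate via~\eqref{eq:u-k-inequality}; both are valid.
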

\begin{proof}
  In this proof it is also convenient to use the extension and
  restriction operators $Z$ and $Z^*$ from the proof of
  Proposition~\ref{prop:H}.

  Let $f\in C^2_c((0,\infty);\LV)$ and let
  \begin{equation*}
    u^{Zf}\in C^1([0,\infty);\LM)\cap L^\infty([0,\infty);\LM)
  \end{equation*}
  be the strong solution of~\eqref{eq:FDE} with the source
  $Zf\in C^2_c((0,\infty);\LM)$. The boundedness of the function
  $u^{Zf}$ implies that for every $s\in\C_+$, the $\LM$-valued
  function $[0,\infty)\ni\tau\mapsto e^{-s\tau}u^{Zf}(\tau)$ is
  integrable, so that the Laplace transform
  \begin{equation*}
    \mathcal{L}u^{Zf}(s) := \int_0^\infty e^{-s\tau}u^{Zf}(\tau)\dtau
  \end{equation*}
  is defined for all $s\in\C_+$. Due to the compact support of $f$, an
  analogous reasoning shows that the Laplace transform
  $\mathcal{L}Zf(s)$ is defined for all $s\in\C$.

  We can write
  \begin{equation*}
    u^{Zf}(t) = \sum_{k=1}^\infty \big(F_k*\bra Zf,\varphi_k\ket_\LM\big)(t)\varphi_k
    \qquad(t\ge 0),
  \end{equation*}
  where the functions $F_k$ are defined in~\eqref{eq:F_k}. By
  Proposition~\ref{prop:integral}, we can take the Laplace transform
  of $u^{Zf}$ componentwise. With {(ii)} of
  Proposition~\ref{prop:ML-properties} and $H$ as defined
  in~\eqref{eq:H}, this results in
  \begin{equation*}
    \begin{split}
      \mathcal{L} u^{Zf}(s)
      &= \sum_{k=1}^\infty \mathcal{L}\big(F_k*\bra Zf,\varphi_k\ket_\LM\big)(s)\varphi_k\\
      &= \sum_{k=1}^\infty\frac{1}{s^\alpha+\lambda_k^\beta}
      \bra\mathcal{L}Zf(s),\varphi_k\ket_\LM\varphi_k\\
      &= \sum_{k=1}^\infty\frac{1}{s^\alpha+\lambda_{q_k}^\beta}P_k\mathcal{L}Zf(s)\\
      &= H(s^\alpha)\mathcal{L}Zf(s) \qquad(s\in\C_+).
    \end{split}
  \end{equation*}
  Using the fact that $\mathcal{L}$ commutes with $Z$ and $Z^*$, we
  obtain
  \begin{equation*}
    \begin{split}
      \mathcal{L}\LSS f(s)
      &= \mathcal{L}Z^*u^{Zf}(s)\\
      &= Z^* H(s^\alpha)\mathcal{L}Zf(s)\\
      &= H_V(s^\alpha)\mathcal{L}f(s) \qquad(s\in\C_+).\qedhere
    \end{split}
  \end{equation*}
\end{proof}

To consider Theorem~\ref{thm:M-equality}, let
$(\widetilde{M},\widetilde{g})$, $\widetilde{V}\subset\widetilde{M}$,
$\theta:\cl(\widetilde{V})\to\cl(V)$, $\LSSop_{\widetilde{V}}$, and
$\LSSTilde$ be as in Section~\ref{sec:LSS-uniqueness}. Let
$(\widetilde{\lambda}_k)_{k=1}^\infty\subset[0,\infty)$ be the
sequence of eigenvalues of $-\Delta_{\widetilde{g}}$ (counted with
multiplicities), and let $(\widetilde{q}_k)_{k=1}^\infty\subset\Zp$ be
a sequence such that
$(\widetilde{\lambda}_{\widetilde{q}_k})_{k=1}^\infty$ is the strictly
increasing sequence of all distinct eigenvalues of
$-\Delta_{\widetilde{g}}$. Define operators
$(P_{\widetilde{V},k})_{k=1}^\infty\subset B(L^2(\widetilde{V}))$ and
function $H_{\widetilde{V}}:\Omega\to B(L^2(\widetilde{V}))$
analogously to $(P_{V,k})_{k=1}^\infty\subset B(\LV)$ and
$H_V:\Omega\to B(\LV)$, respectively.

Let
\begin{equation*}
  \widetilde{P}_{\widetilde{V},k}
  :=(\theta^*)^{-1}P_{\widetilde{V},k}\theta^*\in B(\LV)
  \qquad(k\in\Zp)
\end{equation*}
be the conjugated operators, and let $\widetilde{H}_{\widetilde{V}}$
be the pointwise conjugated $B(\LV)$-valued function defined by
\begin{equation*}
  \widetilde{H}_{\widetilde{V}}(z)
  := (\theta^*)^{-1} H_{\widetilde{V}}(z)\theta^*
  =  \sum_{k=1}^\infty\frac{1}{z+\widetilde{\lambda}_{\widetilde{q}_k}^\beta} \widetilde{P}_{\widetilde{V},k}
  \qquad(z\in\Omega).
\end{equation*}

\begin{proposition}
  \label{prop:set-equality}
  The local source-to-solution operator $\LSS$ uniquely determines the
  pairs $(\lambda_{q_k},P_{V,k})$, i.e., if $\LSS=\LSSTilde$, then
  \begin{equation}
    \label{eq:equality}
    \{(\lambda_{q_k},P_{V,k}):k\in\Zp\}
    = \{(\widetilde{\lambda}_{\widetilde{q}_k},\widetilde{P}_{\widetilde{V},k}):k\in\Zp\}.
  \end{equation}
\end{proposition}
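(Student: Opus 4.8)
The plan is to exploit the relation $\mathcal{L}\LSS f(s) = H_V(s^\alpha)\mathcal{L}f(s)$ from Proposition~\ref{prop:H-Laplace}. The hypothesis $\LSS = \LSSTilde$ means $\theta^*\LSS f = \LSSop_{\widetilde V}\theta^* f$ for all admissible $f$, i.e., $\LSS f = (\theta^*)^{-1}\LSSop_{\widetilde V}\theta^* f$. Applying the Laplace transform to both sides and using Proposition~\ref{prop:H-Laplace} on each manifold, together with the fact that $\mathcal{L}$ commutes with $\theta^*$ (since $\theta^*$ is a fixed bounded operator not depending on $t$), I would deduce
\begin{equation*}
  H_V(s^\alpha)\mathcal{L}f(s) = \widetilde H_{\widetilde V}(s^\alpha)\mathcal{L}f(s)
  \qquad (s\in\C_+).
\end{equation*}
Since this holds for every $f\in C^2_c((0,\infty);\LV)$, and since the Laplace transforms $\mathcal{L}f(s)$ range over a dense subset of $\LV$ as $f$ varies (for each fixed $s$, one can produce $\mathcal{L}f(s)$ equal to any $\xi\in\LV$ up to scaling, e.g. by taking $f(t)=a(t)\xi$ with $a$ a fixed bump and rescaling), it follows that $H_V(s^\alpha) = \widetilde H_{\widetilde V}(s^\alpha)$ as operators in $B(\LV)$, for every $s\in\C_+$. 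As $s$ ranges over $\C_+$, $s^\alpha$ ranges over the sector $\{z : |\arg z| < \alpha\pi/2\}$, which is an open subset of $\Omega$ having $\Omega$ as a subset of its set of limit points; by the identity theorem for holomorphic $B(\LV)$-valued functions (Proposition~\ref{prop:H}(i) gives holomorphy on all of $\Omega$), we conclude $H_V(z) = \widetilde H_{\widetilde V}(z)$ for all $z\in\Omega$.

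With the equality $H_V \equiv \widetilde H_{\widetilde V}$ on $\Omega$ in hand, I would extract the eigenvalue–projection pairs using Proposition~\ref{prop:H}(ii). For each $z_0\in[0,\infty)$ the limit $\lim_{z\to -z_0,\,z\in\Omega}(z+z_0)H_V(z)$ equals $P_{V,k}$ if $z_0 = \lambda_{q_k}^\beta$ and $0$ otherwise; the same holds for $\widetilde H_{\widetilde V}$ with $\widetilde\lambda_{\widetilde q_k}^\beta$ and $\widetilde P_{\widetilde V,k}$. Since the two functions coincide, their residue-type limits at each $z_0$ coincide. Thus $z_0$ is of the form $\lambda_{q_k}^\beta$ for some $k$ if and only if it is of the form $\widetilde\lambda_{\widetilde q_j}^\beta$ for some $j$, which (as $\beta>0$ and $t\mapsto t^\beta$ is injective on $[0,\infty)$) gives $\{\lambda_{q_k}:k\in\Zp\} = \{\widetilde\lambda_{\widetilde q_j}:j\in\Zp\}$; and whenever $\lambda_{q_k} = \widetilde\lambda_{\widetilde q_j}$, the corresponding limits being equal yields $P_{V,k} = \widetilde P_{\widetilde V,j}$. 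This is exactly the set equality~\eqref{eq:equality}.

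The main obstacle I anticipate is the first step: justifying that $H_V(s^\alpha)\mathcal{L}f(s)$ can be "divided" by $\mathcal{L}f(s)$ to yield operator equality. One must verify that $\{\mathcal{L}f(s) : f\in C^2_c((0,\infty);\LV)\}$ is dense in $\LV$ for each fixed $s\in\C_+$ (or at least total), so that agreement of $H_V(s^\alpha)$ and $\widetilde H_{\widetilde V}(s^\alpha)$ on this set forces agreement as bounded operators; taking $f(t) = a(t)\,\xi$ for a fixed scalar bump $a\in C^2_c((0,\infty))$ with $\mathcal{L}a(s)\ne 0$ and arbitrary $\xi\in\LV$ gives $\mathcal{L}f(s) = \mathcal{L}a(s)\,\xi$, which already sweeps out all of $\LV$, so this is clean once spelled out. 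A secondary technical point is the passage from equality on the sector $s^\alpha$, $s\in\C_+$, to equality on all of $\Omega$ via analytic continuation, but this is routine given Proposition~\ref{prop:H}(i). Everything else is bookkeeping with the conjugation by $\theta^*$, which is an isomorphism $\LV\to L^2(\widetilde V)$ and commutes with the time-only operations involved.
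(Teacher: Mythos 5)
Your proposal follows essentially the same route as the paper: obtain $H_V(s^\alpha)\xi=\widetilde H_{\widetilde V}(s^\alpha)\xi$ from Proposition~\ref{prop:H-Laplace} applied to sources of the form $f(t)=a(t)\xi$, conclude the operator identity $H_V=\widetilde H_{\widetilde V}$ on all of $\Omega$ by holomorphy (the paper works with real $s>0$, where $\mathcal{L}a(s)>0$ for a non-negative bump $a$, and continues analytically from $(0,\infty)$; your sector argument is an equally valid variant), and then read off the pairs from the residue-type limits of Proposition~\ref{prop:H}.

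There is, however, one genuine gap in the final step. From the equality of the limits of $(z+\lambda_{q_k}^\beta)H_V(z)$ and $(z+\lambda_{q_k}^\beta)\widetilde H_{\widetilde V}(z)$ as $z\to-\lambda_{q_k}^\beta$ you conclude that $\lambda_{q_k}^\beta$ must occur among the $\widetilde\lambda_{\widetilde q_j}^\beta$. But the dichotomy in~\eqref{eq:limit-H} only yields $P_{V,k}=0$ or $P_{V,k}=\widetilde P_{\widetilde V,j}$; nothing in your argument excludes the first alternative, and if $P_{V,k}$ were zero the eigenvalue $\lambda_{q_k}$ would be invisible to $H_V$ and your ``if and only if'' between the two eigenvalue sets would fail. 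The missing ingredient, which the paper supplies explicitly, is that $P_{V,k}\neq 0$: by the unique continuation principle for eigenfunctions of $-\Dg$ on the connected manifold $M$, no nonzero eigenfunction vanishes on the open set $V$, so the restrictions $\varphi_k|_V$ are linearly independent and each restricted projection $P_{V,k}$ is nonzero. With this observation (and its analogue on $\widetilde M$ for the reverse inclusion, which you correctly leave to symmetry), your argument is complete.
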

\begin{proof}
  The pointwise conjugated function $\widetilde{H}_{\widetilde{V}}$ is
  holomorphic on $\Omega$, and the limit~\eqref{eq:limit-H} holds if
  $H_V$, $P_{V,k}$ and $\lambda_{q_k}$ are replaced by
  $\widetilde{H}_{\widetilde{V}}$, $\widetilde{P}_{\widetilde{V},k}$
  and $\widetilde{\lambda}_{\widetilde{q}_k}$, respectively. Also,
  equality~\eqref{eq:LSS-Laplace} holds if $\LSS$ and $H_V$ are
  replaced by $\LSSTilde$ and $\widetilde{H}_{\widetilde{V}}$,
  respectively.

  Fix a nonzero non-negative function $a(t)\in C^2_c((0,\infty))$ and
  pick an arbitrary function $\xi\in\LV$. If
  $\LSS(a(t)\xi)=\LSSTilde(a(t)\xi)$, then
  \begin{equation}
    \label{eq:H_V-equality}
    \begin{split}
      \big(\mathcal{L}a(s)\big)H_V(s^\alpha)\xi
      &= \mathcal{L}\big(\LSS(a(t)\xi)\big)(s)\\
      &= \mathcal{L}\big(\LSSTilde(a(t)\xi)\big)(s)\\
      &=
      \big(\mathcal{L}a(s)\big)\widetilde{H}_{\widetilde{V}}(s^\alpha)\xi
      \qquad(s\in\C_+),
    \end{split}
  \end{equation}
  where the first and last equality are due to~\eqref{eq:LSS-Laplace}.
  Because $\xi\in\LV$ is arbitrary and $\mathcal{L}a(s)>0$ for every
  $s\in\R_+$, equality~\eqref{eq:H_V-equality} implies
  \begin{equation}
    \label{eq:H_V-equality-2}
    H_V(s) = \widetilde{H}_{\widetilde{V}}(s)
    \qquad(s\in\R_+).
  \end{equation}
  
  Proposition~\ref{prop:H} states that both sides
  of~\eqref{eq:H_V-equality-2} are holomorphic functions on
  $\Omega$. Because $\Omega$ is a region and the functions agree on
  $(0,\infty)$, they must agree everywhere on $\Omega$.
  
  Due to the unique continuation principle, note that the functions
  $(\varphi_k|_V)_{k=1}^\infty$ are linearly independent, and
  therefore $P_{V,k}\neq 0$. From~\eqref{eq:limit-H} and the fact that
  $H_V=\widetilde{H}_{\widetilde{V}}$ on $\Omega$ it follows that
  \begin{equation*}
    \begin{split}
      P_{V,k}
      &=\lim_{\substack{s\to-\lambda_{q_k}^\beta,\\s\in\Omega}} (s+\lambda_{q_k}^\beta)H_V(s)\\
      &=\lim_{\substack{s\to-\lambda_{q_k}^\beta,\\s\in\Omega}} (s+\lambda_{q_k}^\beta)\widetilde{H}_{\widetilde{V}}(s)\\
      &=
      \begin{cases}
        0, &  \lambda_{q_k}^\beta\notin\{\widetilde{\lambda}^\beta_{\widetilde{q}_l}:l\in\Zp\},\\
        \widetilde{P}_{\widetilde{V},l}, & \lambda_{q_k}^\beta =
        \widetilde{\lambda}_{\widetilde{q}_l}^\beta.
      \end{cases}
    \end{split}
  \end{equation*}
  Thus there must exist an index $l\in\Zp$ such that
  $\lambda_{q_k}=\widetilde{\lambda}_{\widetilde{q}_l}$ and
  $P_{V,k}=\widetilde{P}_{\widetilde{V},l}$.

  We have shown that the left-hand side of~\eqref{eq:equality} is a
  subset of the right-hand side. The other direction follows from
  symmetry.
\end{proof}

In order to prove Theorem~\ref{thm:M-equality}, we reduce the
situation from the fractional diffusion equation~\eqref{eq:FDE} to
that of the wave equation on the same manifold $(M,g)$:
\begin{equation}
  \label{eq:wave}
  \begin{aligned}
    (\partial_t^2-\Dg) w(x,t) &= p(x,t),  && (x,t)\in M\times (0,\infty),\\
    w(x,0) &= 0, && x\in M,\\
    \partial_t w(x,0) &= 0, && x\in M.
  \end{aligned}
\end{equation}

For a source $p\in C^\infty_c(V\times(0,\infty))$, let
$w^p\in C^\infty(M\times(0,\infty))$ denote the unique solution
of~\eqref{eq:wave}, and define the hyperbolic local source-to-solution
operator
$\LSS^{\mathrm{hyp}}:C^\infty_c(V\times(0,\infty))\to
C^\infty(V\times(0,\infty))$ by
\begin{equation*}
  \LSS^{\mathrm{hyp}}p := w^p|_{V\times(0,\infty)}.
\end{equation*}

\begin{proof}[Proof of Theorem~\ref{thm:M-equality}]
  Consider $p\in C_c^\infty(V\times(0,\infty))$. The solution $w^p$ of
  the wave equation~\eqref{eq:wave} can be written as
  \begin{equation}
    \label{eq:wave-solution}
    w^p(x,t) = \sum_{k=1}^\infty\int_0^t s_k(t-\tau)\bra p(\cdot,\tau),\varphi_k\ket_\LM\varphi_k(x)\dtau
    \qquad(t\ge 0),
  \end{equation}
  where
  \begin{equation*}
    s_1(t):=t\text{ and } s_k(t):=\frac{\sin(\sqrt{\lambda_k}\,t)}{\sqrt{\lambda_k}},\text{ for }k \ge 2,
  \end{equation*}
  and the series~\eqref{eq:wave-solution} converges in
  $L^2(M\times[0,T])$, for every $T>0$ (see Corollary~2
  of~\cite{MR3826551}). Thus
  \begin{equation}
    \label{eq:LSS-hyp}
    \LSS^{\mathrm{hyp}}p(x,t) = \sum_{k=1}^\infty \int_0^ts_{q_k}(t-\tau) P_{V,k}\,p\,(x,\tau)\dtau
    \qquad(t\ge 0),
  \end{equation}
  where the series converges in $L^2(V\times[0,T])$, for every $T>0$.

  By Proposition~\ref{prop:set-equality}, the local source-to-solution
  operator $\LSS$ for the fractional diffusion equation~\eqref{eq:FDE}
  uniquely determines the eigenvalues
  $(\lambda_{q_k})_{k=1}^\infty\subset[0,\infty)$ of $-\Dg$ and the
  corresponding operators $(P_{V,k})_{k=1}^\infty\subset
  B(\LV)$. From~\eqref{eq:LSS-hyp} it follows that these in turn
  uniquely determine the hyperbolic local source-to-solution operator
  $\LSS^{\mathrm{hyp}}$ for the wave equation~\eqref{eq:wave}. By
  Theorem~2 of~\cite{MR3826551}, $\LSS^{\mathrm{hyp}}$ determines the
  manifold $(M,g)$ up to a Riemannian isometry. The proof is finished.
\end{proof}

\begin{proof}[Proof of Theorem~\ref{thm:main-theorem}]
  Let $h\in C_c^\infty((0,T);\LV)$ be a source as defined in
  Definition~\ref{def:h}, and suppose~\eqref{eq:main-theorem} of
  Theorem~\ref{thm:main-theorem} holds with this
  source. Then~\eqref{eq:LSSh-equality} of
  Proposition~\ref{prop:LSS-uniqueness} holds, and
  consequently~\eqref{eq:LSS-equality} holds for every source
  $f\in C^2_c((0,\infty);\LV)$.  Therefore, by
  Theorem~\ref{thm:M-equality}, the manifolds $(M,g)$ and
  $(\widetilde{M},\widetilde{g})$ are Riemannian isometric.
\end{proof}

\appendix

\section{Calculus of Hilbert space valued functions of a real
  variable}
\label{sec:calculus}

Consider a measurable subset $I\subset\R$ and a function
$y:I\to\LM$. We say that $y$ is \emph{integrable} if it is strongly
measurable and $\int_I\|y(\tau)\|_\LM\dtau<\infty$. The measure on $I$
is the Lebesgue measure, and measurability of the norm is a
consequence of the strong measurability of $y$. For $1\le p\le\infty$,
the space $L^p(I;\LM)$ consists of those strongly measurable functions
$y:I\to\LM$ for which $\|y\|_\LM\in L^p(I)$. We recall that for
$y\in L^1(I;\LM)$ the (Bochner) integral $\int_I y(\tau)\dtau\in\LM$
is defined. For general theory of integration of functions with values
in a Banach space, we refer the reader  to~\cite{MR1216137}.

Suppose then that $I\subset\R$ is an interval with at least two points
and fix a point $t\in I$. We recall that $y$ is said to be
\emph{differentiable at the point $t$}, if there exists a function
$\xi\in\LM$ such that
\begin{equation*}
  \lim_{h\to 0} \Big\|\frac{y(t+h)-y(t)}{h} - \xi\Big\|_\LM = 0.
\end{equation*}
If $t$ is an endpoint of $I$, the limit is the appropriate one-sided
limit.

The \emph{derivative of $y$ at $t$}, denoted by $y'(t)$, is defined to
be the function $\xi\in\LM$. If $y$ is differentiable at every point
of $I$ and the so obtained function $y':I\to\LM$ is continuous, $y$ is
\emph{continuously differentiable}. The space of all continuously
differentiable functions is denoted by $C^1(I;\LM)$. Higher order
derivatives and spaces $C^k(I;\LM)$ are defined recursively exactly as
in the case of scalar functions.

\begin{proposition}
  \label{prop:integral}
  Consider a function $y:I\to\LM$,
  $y(t)=\sum_{k=1}^\infty y_k(t)\psi_k$, where $I\subset\R$ is measurable, $y_k:I\to\C$ are
  complex-valued functions, $(\psi_k)_{k=1}^\infty\subset\LM$ is an
  orthonormal basis, and the series converges in $\LM$ for every
  $t\in I$. Then the following hold:
  \begin{enumerate}[(i)]
  \item The $\LM$-valued function $y$ is strongly measurable, if and
    only if all the complex-valued functions $y_k:I\to\C$ are measurable.
  \item If $y\in L^1(I;\LM)$, then
    \begin{equation}
      \label{eq:componentwise-integral}
      \int_I y(\tau)\dtau = \sum_{k=1}^\infty\left[\int_I y_k(\tau)\dtau\right] \psi_k.
    \end{equation}
  \end{enumerate}
\end{proposition}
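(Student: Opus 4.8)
The plan is to prove both parts by transferring everything to the scalar level through the bounded linear functionals $\xi\mapsto\bra\xi,\psi_k\ket_\LM$, using the separability of $\LM$ and the Pettis measurability theorem. First I would record the elementary observation that, for every $t\in I$ and every $k\in\Zp$, continuity of the inner product together with the assumed convergence of the series gives $y_k(t)=\bra y(t),\psi_k\ket_\LM$.

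For part (i), the forward implication is then immediate: if $y$ is strongly measurable, then composing with the bounded (hence Borel) functional $\bra\cdot,\psi_k\ket_\LM$ shows that $y_k$ is measurable. For the converse, assume all $y_k$ are measurable. Each partial sum $S_N(t):=\sum_{k=1}^N y_k(t)\psi_k$ is a finite sum of measurable scalar functions multiplied by fixed vectors, hence strongly measurable (it takes values in a finite-dimensional, separable subspace and is Borel into it). Since $S_N(t)\to y(t)$ in $\LM$ for every $t\in I$ by hypothesis and $\LM$ is separable, the Pettis measurability theorem (a pointwise limit of strongly measurable functions with values in a separable Banach space is strongly measurable; see \cite{MR1216137}) yields strong measurability of $y$.

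For part (ii), assume $y\in L^1(I;\LM)$, so the Bochner integral $z:=\int_I y(\tau)\dtau\in\LM$ exists. For each $j\in\Zp$ the pointwise bound $|y_j(\tau)|=|\bra y(\tau),\psi_j\ket_\LM|\le\|y(\tau)\|_\LM$ shows $y_j\in L^1(I)$, so the scalar integrals $\int_I y_j(\tau)\dtau$ are well defined. Since the Bochner integral commutes with bounded linear functionals,
\[
\bra z,\psi_j\ket_\LM=\int_I\bra y(\tau),\psi_j\ket_\LM\dtau=\int_I y_j(\tau)\dtau\qquad(j\in\Zp).
\]
Thus the Fourier coefficients of $z$ with respect to the orthonormal basis $(\psi_k)_{k=1}^\infty$ are precisely $\big(\int_I y_k(\tau)\dtau\big)_{k=1}^\infty$, and the orthonormal-basis expansion of $z$ gives the claimed identity~\eqref{eq:componentwise-integral}, the series converging in $\LM$.

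The only genuinely non-elementary point is the converse in (i), namely upgrading measurability of the scalar components to strong measurability of the $\LM$-valued function; this is exactly where separability of $\LM$ and the Pettis measurability theorem are needed. The rest is a routine application of the compatibility of Bochner integration with bounded linear maps.
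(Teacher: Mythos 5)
Your proposal is correct and follows essentially the same route as the paper: identify $y_k=\bra y(\cdot),\psi_k\ket_\LM$, handle (i) by composing with the coordinate functionals in one direction and by taking pointwise limits of the strongly measurable partial sums in the other, and prove (ii) by commuting the Bochner integral with the functionals $\bra\cdot,\psi_k\ket_\LM$. The only cosmetic difference is that you cite the Pettis measurability theorem for the limit step, whereas the paper argues directly via approximation by step maps; both amount to the closure of strongly measurable functions under pointwise limits.
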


Note that the integrals on the right-hand side
of~\eqref{eq:componentwise-integral} are ordinary Lebesgue integrals
of complex-valued functions.
\begin{proof}
  If $y$ is strongly measurable and $k\in\Zp$, the component function
  $y_k=\bra y,\psi_k\ket_\LM$ is strongly measurable as the
  composition of a continuous function with a strongly measurable
  function. A scalar strongly measurable function on $I$ is
  measurable.

  On the other hand, if $y_k:I\to\C$ is measurable, it is an almost
  everywhere limit of complex-valued step functions, and therefore the
  $\LM$-valued map $y_k\psi_k:I\ni t\mapsto y_k(t)\psi_k\in\LM$ is an
  almost everywhere limit of $\LM$-valued step maps. In other words,
  $y_k\psi_k$ is strongly measurable. It follows that the partial sums
  $\sum_{k=1}^N y_k\psi_k$ are strongly measurable, and therefore $y$
  as their pointwise limit in $\LM$ is strongly
  measurable.

  If $y\in L^1(I;\LM)$, it is a property of the integral that
  \begin{equation*}
    \left\bra\int_I y(\tau)\dtau,\xi\right\ket_\LM
    = \int_I \bra y(\tau),\xi\ket_\LM\dtau
    \qquad(\xi\in\LM).
  \end{equation*}
  Applying this with $\xi=\psi_k$
  proves~\eqref{eq:componentwise-integral}.
\end{proof}

\begin{proposition}
  \label{prop:general-prop}
  Let $X$ be a Banach space, $(\xi_k)_{k=1}^\infty\subset X$ be a bounded
  sequence, and $(h_k)_{k=1}^\infty\subset C^\infty(I)$ be a sequence of
  complex-valued functions, where $I\subset\R$ is an open set.  Suppose that
  the derivatives $h_k^{(l)}$ satisfy
  \begin{equation}
    \label{eq:M-condition}
    \sum_{k=1}^\infty \|h_k^{(l)}\|_\infty < \infty
    \qquad(l=0,1,2,\ldots).
  \end{equation}
  Then the series $f:=\sum_{k=1}^\infty h_k\xi_k$ converges uniformly
  on $I$, and $f\in C^\infty(I; X)$.
  Furthermore, the derivatives of
  $f$ are obtained by term-wise differentiation, and also those series
  converge uniformly on $I$.
\end{proposition}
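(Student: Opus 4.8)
The plan is to run a Weierstrass $M$-test argument, adapted to $X$-valued functions, for the series of $l$-th derivatives, and then to upgrade the resulting uniform convergence of derivatives to smoothness of the limit via the fundamental theorem of calculus.

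First I would set $C:=\sup_k\|\xi_k\|_X<\infty$ and fix an order $l\ge 0$. Writing $S_N^{(l)}:=\sum_{k=1}^N h_k^{(l)}\xi_k$ for the partial sums, one has for $N>N'$ the bound $\|S_N^{(l)}-S_{N'}^{(l)}\|_\infty \le C\sum_{k=N'+1}^N\|h_k^{(l)}\|_\infty$, which is a tail of a convergent series by~\eqref{eq:M-condition}. Hence $(S_N^{(l)})_N$ is uniformly Cauchy on $I$; since $X$ is complete, the space of bounded $X$-valued functions on $I$ equipped with the supremum norm is complete, so the series $\sum_k h_k^{(l)}\xi_k$ converges uniformly on $I$ to a bounded function $g_l:I\to X$, and as a uniform limit of continuous maps $g_l$ is continuous. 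In particular $f=g_0$ is a well-defined, bounded, continuous function on $I$, and the claimed uniform convergence holds for every order.

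Next I would show that $f\in C^\infty(I;X)$ with $f^{(l)}=g_l$, arguing on each connected component of the open set $I$, i.e.\ on a genuine open interval, which is where termwise differentiation is meaningful. Fix such a component and a base point $t_0$ in it. Each $S_N:=S_N^{(0)}$ is smooth, so the fundamental theorem of calculus for $X$-valued functions (see~\cite{MR1216137}) gives $S_N(t)=S_N(t_0)+\int_{t_0}^t S_N'(\tau)\,d\tau$ for $t$ in the component. Letting $N\to\infty$ and using that $S_N(t_0)\to g_0(t_0)$ in $X$ together with the uniform convergence $S_N'\to g_1$ on the bounded interval with endpoints $t_0,t$ (which lets one pass to the limit under the Bochner integral) yields $g_0(t)=g_0(t_0)+\int_{t_0}^t g_1(\tau)\,d\tau$. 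Since $g_1$ is continuous, $g_0$ is continuously differentiable with $g_0'=g_1$. Iterating this with $g_0,g_1$ replaced by $g_l,g_{l+1}$ gives $g_l'=g_{l+1}$ for all $l$, so $f=g_0\in C^\infty(I;X)$ and its derivatives are obtained by termwise differentiation, with all the derived series converging uniformly by the first step.

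The argument is essentially routine; the only points requiring a little care are the passage to the limit under the integral sign and the invocation of the fundamental theorem of calculus in the Banach-valued setting — both standard facts of Bochner integration — and the reduction to the components of $I$ so that the phrase ``termwise differentiation'' refers to differentiation on an interval. I do not anticipate any genuine obstacle.
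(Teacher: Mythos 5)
Your proposal is correct and follows essentially the same route as the paper: a Weierstrass $M$-test gives uniform convergence of each derived series, and then the standard result on differentiating uniform limits (which the paper cites directly, and which you reprove via the fundamental theorem of calculus for Bochner integrals) yields $f^{(l)}=\sum_k h_k^{(l)}\xi_k$ by induction. Your extra care about restricting to connected components of $I$ is a harmless refinement, not a substantive difference.
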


\begin{proof}
  The terms $ h_k\xi_k:I\to X$ are continuous, and by the Weierstrass
  M-test (using the boundedness of $\xi_k$
  and~\eqref{eq:M-condition}), the series $\sum_{k=1}^\infty h_k\xi_k$
  converges uniformly to $f$. It follows that $f$ is continuous.

  For $N\in\Zp$ we have
  \begin{equation*}
    \left(\sum_{k=1}^N  h_k\xi_k\right)'=\sum_{k=1}^N h_k'\xi_k,
  \end{equation*}
  and the same reasoning as above implies that
  \begin{equation*}
    \left(\sum_{k=1}^N h_k\xi_k\right)' \to \sum_{k=1}^\infty h_k'\xi_k
  \end{equation*}
  uniformly as $N\to\infty$. It follows from standard results of
  differentiation (see, e.g.,~\cite{MR1216137}, Theorem~9.1) that $f$ is
  differentiable and $f'=\sum_{k=1}^\infty h_k'\xi_k$.

  An easy induction finishes the proof.
\end{proof}

\section*{Acknowledgement}
The research was been partially supported by Academy of Finland, grants 273979, 284715, 312110, 314879 and the Atmospheric mathematics project of University of Helsinki.

\bibliographystyle{abbrv} \bibliography{interior-source}

\end{document}